\newcommand{\E}{\mathrm{E}}
\newcommand{\Var}{\mathrm{Var}}
\newcommand{\Cov}{\mathrm{Cov}} 
\newcommand{\alpren}{$\alpha$-R\'enyi }
\newcommand{\loglik}{r_n(\theta,\theta_0)}
\newcommand{\malpharen}{D_{\alpha}(P^{(n)}_{\theta},P^{(n)}_{\theta_0})}
\newcommand{\data}{X_0,\dots,X_n}
\newcommand{\datas}{x_0,\dots,x_n}
\newcommand{\kld}{ \mathcal{K}(P^{(n)}_{\theta_0},P^{(n)}_{\theta})}
\newcommand{\lowersigma}{\mathcal{M}_{-\infty}^{t}}
\newcommand{\uppersigma}{ \mathcal{M}_{t+k}^{\infty}}
\newcommand{\psirr}{$\psi$-irreducible }
\newtheorem{defn}{Definition}[section]
\newtheorem{corollary}{Corollary}[section]
\newtheorem{theorem}{Theorem}[section]
\newtheorem{lemma}{Lemma}[section]
\newtheorem{proposition}{Proposition}[section]
\newtheorem{assumption}{Assumption}[section]
\newtheorem{example}{Example}[section]
\newtheorem*{lemma*}{Lemma}
\newcommand{\ifix}[1]{}
\newcommand{\commentfixed}[1]{ }
\newcommand{\hh}[1]{}
\begin{document}
\title[PAC-Bayes for Markov Models]{PAC-Bayes Bounds on Variational Tempered Posteriors for Markov Models}
\author{Imon Banerjee, Vinayak A. Rao}
\address{Department of Statistics, Purdue University, West Lafayette IN 47906}
\email{\{ibanerj,varao,honnappa\}@purdue.edu}

\author{Harsha Honnappa}
\address{School of Industrial Engineering, Purdue University, West Lafayette IN 47906}
%\email{honnappa@purdue.edu}

\date{}
\maketitle
\begin{abstract}
    
    Datasets displaying temporal dependencies abound in science and engineering applications, with
    Markov models representing a simplified and popular view of the temporal dependence structure. %, and 
    In this paper, we consider Bayesian settings that place prior distributions over the parameters of the transition kernel of a Markov model, and seeks to characterize the resulting, typically intractable, posterior distributions.
    We present a PAC-Bayesian analysis of variational Bayes (VB) approximations to tempered Bayesian posterior distributions, bounding the model risk of the VB approximations.
    Tempered posteriors are known to be robust to model misspecification, and their variational approximations do not suffer the usual problems of over confident approximations.
    Our results tie the risk bounds to the mixing and ergodic properties of the Markov data generating model. We illustrate the PAC-Bayes bounds through a number of example Markov models, and also consider the situation where the Markov model is misspecified.
\end{abstract}
\section{Introduction}
This paper presents probably approximately correct (PAC)-Bayesian bounds on variational Bayesian (VB) approximations of fractional or tempered posterior distributions for Markov data generation models. Exact computation of either standard or tempered posterior distributions is a hard problem that has, broadly speaking, spawned two classes of computational methods. The first, Markov chain Monte Carlo (MCMC), constructs ergodic Markov chains to approximately sample from the posterior distribution. MCMC is known to suffer from high variance and complex diagnostics, leading to the development of variational Bayesian (VB)~\cite{jordan} methods as an alternative in recent years. 
VB methods pose posterior computation as a variational optimization problem, approximating the posterior distribution of interest by the `closest' element of an appropriately defined class of `simple' probability measures. 
Typically, the measure of closeness used by VB methods is the Kullback-Leibler (KL) divergence. 
Excellent introductions to this so-called {\it KL-VB} method can be found in \cite{bishop,ormerod,blei}. More recently, there has also been interest in alternative divergence measures, particularly the $\alpha$-R\'enyi divergence~\cite{prateek2,li2016renyi, dieng2017variational}, though in this paper, we focus on the KL-VB setting.

 Theoretical properties of VB approximations, and in particular asymptotic frequentist consistency, have been studied extensively under the assumption of an independent and identically distributed (i.i.d.) data generation model~\cite{blei,blei1,zhang}. On the other hand, the common setting where data sets display temporal dependencies presents unique challenges. In this paper, we focus on homogeneous Markov chains with parameterized transition kernels, representing a parsimonious class of data generation models with a wide range of applications.  
%The Markov assumption adopted in this paper imposes a specific temporal dependence structure on the presumed data generation model, and we focus on time  Markov chains . 
We work in the Bayesian framework, focusing on the posterior distribution over the unknown parameters of the transition kernel.
Our theory develops PAC bounds that link the ergodic and mixing properties of the data generating Markov chain to the Bayes risk associated with approximate posterior distributions.

Frequentist consistency of Bayesian methods, in the sense of concentration of the posterior distribution around neighborhoods of the `true' data generating distribution, have been established in significant generality, in both the i.i.d.\ \cite{ghosh,shen,rous} and in the non-i.i.d.\ data generation setting~\cite{ghoshal,pati}.
More recent work~\cite{pierre,bhatta,pati} has studied {fractional or tempered posteriors}, a class of generalized Bayesian posteriors obtained by combining the likelihood function raised to a fractional power with an appropriate prior distribution using Bayes theorem.
Tempered posteriors are known to be robust against model misspecification: in the Markov setting we consider, the associated stationary distribution  as well as mixing properties are sensitive to model parameterization. 
Further, tempered posteriors are known to be much simpler to analyze theoretically~\cite{bhatta,pati}.
Therefore, following~\cite{pierre,bhatta,pati} we focus on tempered posterior distributions on the transition kernel parameters, and study the rate of concentration of variational approximations to the tempered posterior. 
Equivalently, as shown in~\cite{bhatta} and discussed in~\cref{sec:notation}, our results also apply to so-called $\alpha$-variational approximations to standard posterior distributions over kernel parameters.
The latter are modifications of the standard KL-VB algorithm to address the well-known problem of overconfident posterior approximations.

While there have been a number of recent papers studying the consistency of approximate variational posteriors~\cite{blei1,prateek2,pierre} in the large sample limit, rates of convergence have received less attention. Exceptions include~\cite{pierre,zhang,prateek1}, where an i.i.d. data generation model is assumed. \cite{pierre} establishes PAC-Bayes bounds on the convergence of a variational {tempered posterior} with fractional powers in the range $[0,1)$, while~\cite{zhang} considers the standard variational posterior case (where the fractional power equals $1$).~\cite{prateek1}, on the other hand, establishes PAC-Bayes bounds for risk-sensitive Bayesian decision making problems in the standard variational posterior setting. The setting in~\cite{pierre} allows for model misspecification and the analysis is generally more straightforward than that in~\cite{zhang,prateek1}. Our work extends~\cite{pierre} to the setting of a discrete-time Markov data generation model. 

Our first results in~\Cref{thm:pac-bayes} and~\Cref{thm:pac-bayes2} of \Cref{section:conc bound} establish PAC-Bayes bounds for sequences with arbitrary temporal dependence. Our results generalize \cite[Theorem 2.4]{pierre} to the non-i.i.d.\ data setting in a straightforward manner. Note that~\Cref{thm:pac-bayes} also recovers~\cite[Theorem 3.3]{bhatta}, which is established under different `existence of test' conditions. Our objective in this paper is to explicate how the ergodic and mixing properties of the Markov data generating process influences the PAC-Bayes bound. The sufficient conditions of our theorem, bounding the mean and variance of the log-likelihood ratio of the data, allows for developing this understanding, without the technicalities of proving the existence of test conditions intruding on the insights. 

In Section~\ref{sec:sedgm} we study the setting where the data generating model is a stationary $\alpha$-mixing Markov chain. Stationarity implies the existence of an invariant distribution corresponding to the parameterized transition kernel, implying the marginal distribution of the Markov data is invariant as well. The $\alpha$-mixing condition, on the other hand, ensures that the variance of the likelihood ratio of the Markov data does not grow faster than linear in the sample size. Our main results in this setting are applicable when the state space of the Markov chain is either continuous or discrete. The primary requirement on the class of data generating Markov models is for the log-likelihood ratio of the parameterized transition kernel and invariant distribution to satisfy a Lipschitz property. This condition implies a decoupling between the model parameters and the random samples, affording a straightforward verification of the mean and variance bounds. We highlight this main result by demonstrating that it is satisfied by a finite state Markov chain, a birth-death Markov chain on the positive integers, and a one-dimensional Gaussian linear model. 

In practice, the assumption that the data generating model is stationary is unlikely to be satisfied. 
Typically, the initial distribution is arbitrary, with the state distribution of the Markov sequence converging weakly to the stationary distribution.
%On the other hand, it is reasonable to assume that the data generating Markov chain has stationary transition probabilities but with a {\it nonstationary} marginal probability. 
In this setting, we must further assume that the class of data generating Markov chains are geometrically ergodic (with an exception for finite state Markov chains). % and $\alpha$-mixing. Indeed,in~\Cref{thm:bound-func} we show that an $\alpha$-mixing Markov chain with bounded log-likelihood ratio satisfies the PAC-Bayes bounds. The boundedness condition is relaxed in~\Cref{thm:lip-gen-ns}, where the Markov chain is $V$-geometric ergodic, implying that there exists a `petite' set that the Markov chain returns to infinitely often. 
We show that this implies the boundedness of the mean and variance of the log-likelihood ratio of the data generating Markov chain. Alternatively, in~\Cref{thm:gen-nonstationary-hajek} we directly impose a drift condition on random variables that bound the log-likelihood ratio. Again, in this more general nonstationary setting, we illustrate the main results by showing that the PAC-Bayes bound is satisfied by a finite state Markov chain, a birth-death Markov chain on the positive integers, and a one-dimensional Gaussian linear model.

In preparation for our main technical results starting in~\Cref{section:conc bound} we first note relevant notations and definitions in the next section.

\subsection{Notations and Definitions}~\label{sec:notation}
%\subsubsection{Notations}
We broadly adopt the notation in~\cite{pierre}. Let the sequence of random variables $X^n = (X_0,\ldots,X_n) \subset \mathbb R^{m\times (n+1)}$ represent a data set of $n+1$ observations drawn from a joint distribution $P_{\theta_0}^{(n)}$, where $\theta_0 \in \Theta \subseteq \mathbb R^d$ is the `true' parameter underlying the data generation process. 
%If each $X_i, \enspace i\in 1,\dots,n$ is independent and identically distributed, we call them i.i.d. 
We assume the state space $S \subseteq \mathbb R^m$ of the random variables $X_i$ is either discrete-valued or continuous, and write $\{\datas\}$ for a realization of the dataset. We also adopt the convention that $0 \log (0/0) = 0$.

%In the continuous case, we assume that the family of distribution functions $\{P_{\theta}^{(n)} : \theta \in \Theta\}$ is absolutely continuous with respect to a reference measure $Q^{(n)}$, and 
For each $\theta \in \Theta$, we will write $p_{\theta}^{(n)}$ as the probability density of $P_{\theta}^{(n)}$ with respect to some measure $Q^{(n)}$, i.e. $p_{\theta}^{(n)}:=\frac{dP_{\theta}^{(n)}}{dQ^{(n)}}$, where $Q^{(n)}$ is either Lebesgue measure or the counting measure.
%Overloading notation, we denote the (joint) probability mass function in the discrete case by $p_{\theta}^{(n)}$ as well. 
%Let $\mathcal{F}$ be a set of measurable functions on $\Theta$. For the purpose of the proofs, we shall usually assume that $\mathcal{F}$ is some subset of the set of probability distributions on $\theta$. For any function $f(\theta_1,\theta_2)$, on $\Theta\times\Theta$ denote the partial derivative $\frac{\delta f(\theta_1,\theta_2)}{\delta \theta_1}$ by $f'(\theta_1,\theta_1)$. %
%, and $\mathcal{B(X)}$ denote the Borel-sigma field on a set $\mathcal{X}$. 
All expectations and variances, which we represent as $\E[X]$ and $\Var[X]$, are taken with respect to the true distribution $P_{\theta_0}$ unless stated otherwise.

Let $\pi(\theta)$ be a {\it prior} distribution with support $\Theta$. The {\it fractional posterior} is defined as 
\begin{align}
    \pi_{n,\alpha|X^n} (d\theta)& := \frac{ e^{-\alpha\loglik(X^n)} \pi(d\theta)}{ \int e^{-\alpha\loglik(X^n)} \pi(d\theta) },
  \end{align}
where, for $\theta, \theta_0 \in \Theta$, $r_n(\theta,\theta_0)(\cdot) := \log\left(\frac{p^{(n)}_{\theta_0}(\cdot)}{p^{(n)}_{\theta}(\cdot)}\right)$, is the log-likelihood ratio of the corresponding density functions. Setting $\alpha=1$ recovers the standard Bayesian posterior. 

The\textit{ Kullback-Leibler} (KL) divergence between distributions $P,Q$ is defined as
\begin{align*}
    \mathcal{K}(P,Q) := \int_{\mathcal{X}} \log\left( \frac{p(x)}{q(x)}\right) p(x) dx,
\end{align*}
where $\mathcal{X}$ is an arbitrary sample space, and $p,q$ are the densities corresponding to $P,Q$ (respectively). In particular, the KL divergence between the distributions parameterized by $\theta_0$ and $\theta$ is
\begin{align}
    \nonumber
    \kld & := \int \log \left( \frac{p^{(n)}_{\theta_0}(\datas)}{p^{(n)}_{\theta}(\datas)} \right) p^{(n)}_{\theta_0}(\datas) dx_0 \cdots dx_n\\
    &= \int r_n(\theta,\theta_0)(\datas) p^n_{\theta_0}(\datas) dx_0 \cdots dx_n.
 \end{align}
The\textit{ $\alpha$-R\'enyi divergence} $\malpharen$ is defined as 
\begin{align}
    \malpharen & := %\frac{1}{\alpha-1} \log \int \left( \frac{p^{(n)}_{\theta}(\datas)}{p^{(n)}_{\theta_0}(\datas)} \right)^{\alpha} p^{(n)}_{\theta_0}(\datas) dx_1 dx_2 \dots dx_n.
    \frac{1}{\alpha-1} \log \int \exp\left(-\alpha r_n(\theta,\theta_0)(\datas)\right) p^{(n)}_{\theta_0}(\datas) dx_0 \cdots dx_n.
\end{align}
By letting $\alpha\to 0$, the \alpren divergence recovers the KL divergence. 

Let $\mathcal F$ be some class of distributions with support in $\mathbb R^d$ and such that any distribution $P$ in $\mathcal{F}$ is absolutely continuous with respect to the tempered posterior:
 $P \ll \pi_{n,\alpha|X^n}$. 
 Let $\Tilde{\pi}_{n,\alpha|X^n}$ be the variational approximation to the tempered posterior, defined as 
\begin{align}~\label{eq:vba}
    \Tilde{\pi}_{n,\alpha|X^n} & := \underset{\rho\in \mathcal{F}}{\arg \min} \enspace \mathcal{K}(\rho,\pi_{n,\alpha|X^n})% = \int_\Theta \log\left( \frac{dP(\theta)}{d\pi_{n,\alpha|X^n}(\theta)}\right) dP(\theta).
\end{align}
%represent the VB approximation to the fractional posterior. 
Many choices of $\mathcal{F}$ exist; for instance, $\mathcal{F}$ can be the set of Gaussian measures, denoted $\mathcal{F}^{\Phi}_{id}$:
\begin{align}
	 \mathcal{F}^{\Phi}_{id} & = \{\Phi(d\theta;\mu,\Sigma): \mu\in \mathbb{R}^{d}, \Sigma_{d\times d} \in \text{P.D.}  \},
\end{align}
where $\text{P.D.}$ references the class of positive definite matrices. 
Alternately, $\mathcal{F}$ can be the family of {\em mean-field} or {factored distributions} where the components $\theta_i$ of $\theta$ are independent of each other. %We summarise the notations above in Table \ref{table:table of notations}.

It is easy to see that \cref{eq:vba} is equivalent to the following optimization problem:
\begin{align}~\label{eq:elbo}
    \Tilde{\pi}_{n,\alpha|X^n} & := \underset{\rho\in \mathcal{F}}{\arg \max}  \int r_n(\theta,\theta_0)(\datas) \rho({d\theta}) - \alpha^{-1} \mathcal{K}(\rho,\pi).
\end{align}
Setting $\alpha=1$ again recovers the usual variational solution that seeks to approximate the posterior distribution with the closest element of $\mathcal{F}$ (the right-hand side above is called the evidence lower bound (ELBO)).
Other settings of $\alpha$ constitute $\alpha$-variational inference~\citep{bhatta}, which seeks to regularize the `overconfident' approximate posteriors that standard variational methods tend to produce. %This provides a different motivation of our work, one that does not involves tempered posterior distributions.

\subsubsection{Markov chains}~\label{section:ergodicity}

%We collect relevant definitions in this section. We start with that of a parameterized Markov chain.
%\begin{defn}[Parameterized Markov chain]
%A random sequence $\{X_n\}$ is a parameterized Markov chain if the sequence satisfies the Markov property with parameterized transition kernel $p_{\theta}(\cdot | \cdot)$, where $\theta \in \Theta \subseteq \mathbb{R}^d$.
%\end{defn}
We assume the joint density or probability mass function $p_{\theta}^{(n)}(\datas)$ corresponds to the `walk probability' of a time-homogeneous Markov chain. We call the Markov chain `parameterized' if the transition kernel $p_\theta(\cdot|\cdot)$ is parametrized by some $\theta \in \Theta \subseteq \mathbb R^d$. Let $q^{(0)}(\cdot)$ be the initial density (defined with respect to the Lebesgue measure over $\mathbb R^m$) or initial probability mass function. Then, the joint density or probability mass function is $p_{\theta}^{(n)}(\datas) = 
%p_{\theta}(x_n | x_{n-1}) p_{\theta}(x_{n-1 | x_{n-2}}) \times \cdots \times p_{\theta}(x_2 | x_1) d(x_1) = 
q^{(0)}(x_0) \prod_{i=0}^{n-1} p_{\theta}(x_{i+1}|x_i)$.

Our results in the ensuing sections will be established under strong mixing conditions~\cite{bradley} on the Markov chain. Specifically, recall the definition of the $\alpha$-mixing coefficients of a stationary Markov chain:
\begin{defn}[{$\alpha$-mixing coefficient}] 
Let $\mathcal{M}_i^j$ denote the $\sigma$-field generated by the Markov chain $\{X_k : i\leq k\leq j\}$ parameterized by $\theta \in \Theta$. Then, the $\alpha$-mixing coefficient is defined as    
\begin{align}
    \alpha_k = \underset{t>0}{\sup}\underset{(A,B)\in \mathcal{M}_{-\infty}^t \times \mathcal{M}_{t+k}^{\infty} }{\sup} \left|P_{\theta}(A\cap B)-P_{\theta}(A)P_{\theta}(B)\right|.
\end{align}
\end{defn}
Informally speaking, the $\alpha$-mixing coefficients $\{\alpha_k\}$ measure the dependence between any two events $A$ (in the `history' $\sigma$-algebra) and $B$ (in the `future' $\sigma$-algebra) with a time lag $k$. Our results in Section~\ref{sec:nsedgm} also rely on the ergodic properties of the Markov chain, and we assume that the Markov chain is $V$-geometrically ergodic \cite[Chapter 15]{meyntweedie}. First, refer to the definition of the functional norm $\|\cdot\|_V$, from \Cref{define:v-norm},

\begin{defn}[{$f$-norm}]~\label{define:v-norm}The functional norm in $f$-metric of the measure $v$, or the {\it $f$-norm} of $v$ is defined as
\begin{equation}
    \|v\|_f=\underset{g:|g|<f}{\sup} \left|\int g dv \right|.
\end{equation}
\end{defn}

 An immediate consequence of this definition is that if $f_1, f_2$ are two functions such that $f_1 < f_2$ (i.e., for all points in the support of the functions), then
\begin{equation}~\label{equation:f-norm-monotone}
    \|v\|_{f_1}\leq\|v\|_{f_2}.
\end{equation} 

Now that we have defined the $\|\cdot\|_f$ norm, we can now define $V$-geometric ergodicity. In the following, we assume the Markov chain is positive Harris; see~\cite{meyntweedie} for a definition. This is a mild and fairly standard assumption in Markov chain theory.

\begin{defn}[{$V$-geometric ergodicity}]~\label{define:v-ergodicity}A stationary Markov chain $\{X_n\}$ parameterized by $\theta \in \Theta$ is $V$-geometrically ergodic if it is positive Harris and there exists a constant $r_V > 1$, that depends on $V$, such that for any $A \in \mathcal B(X)$,
\begin{equation}
    \sum_{n=1}^n r_V^n \left\|P_\theta(X_n \in A | X_0 = x)- \int_A q_\theta(y) dy \right\|_V < \infty.
\end{equation}
\end{defn}
It is straightforward to see that this is equivalent to 
\begin{equation*}
    \left\|P_\theta(X_n \in A |X_0=x)-\int q_\theta(y) dy \right\|_V\leq C r^{-n}
\end{equation*}
for an appropriate constant $C$ (which may depend on the state $x$). That is, the Markov chain approaches steady state at a geometrically fast rate.
If a Markov chain is $V$-geometrically ergodic for $V\equiv1$, then, it is simply termed as \textit{geometrically ergodic}. It is straightforward to see (via \cref{theorem:jones} in the appendix) that a geometrically ergodic Markov chain is also $\alpha$-mixing, with $\alpha$ coefficients satisfying
%As a consequence it also follows that,
\begin{equation}\label{eq:alpha_mixing-coeff-sum}
    \sum_{k\geq 0} \alpha_k^{\upsilon}<\infty\enspace\forall\enspace\upsilon>0,
\end{equation}
{showing that, under geometric ergodicity, the alpha mixing coefficients raised to any positive power $\upsilon$ are finitely summable.} We note here that the most standard procedure to establish $V$-geometric ergodicity for any Markov chain is through the verification of the drift condition, \Cref{assume:drift}. If a Markov chain satisfies \Cref{assume:drift}, then \Cref{theorem:v-geom} proves that it is also $V$-geometrically ergodic. 

\section{A Concentration Bound for the $\alpha$-R\'enyi Divergence}\label{section:conc bound}
%\begin{customlemma}{1}\textbf{(Donsker-Varadhan's Lemma)}
The object of analysis in what follows is the probability measure $ \Tilde{\pi}_{n,\alpha|X^n}(\theta)$, the variational approximation to the tempered posterior.
Our main result establishes a bound on the Bayes risk of this distribution; in particular, given a sequence of loss functions $\ell_n(\theta,\theta_0)$, we bound $\int \ell_n(\theta,\theta_0) \Tilde{\pi}_{n,\alpha|X^n}(\theta) d\theta$.
Following recent work %on (frequentist) statistical inference of fractional Bayesian posteriors
in both the i.i.d.\ and dependent sequence setting~\cite{pati,pierre,bhatta}, we will use $\ell_n(\theta,\theta_0) = D_{\alpha}(P^{(n)}_{\theta},P^{(n)}_{\theta_0})$, the $\alpha$-R\'enyi divergence between $P^{(n)}_\theta$ and $P^{(n)}_{\theta_0}$ as our loss function 
%to measure posterior risk for correlated data sequences %
(recall that for each $\theta \in \Theta$ and $n \ge 1$, $P_\theta^{(n)}$ is the distribution corresponding to the sequence $\{X_0,\ldots,X_n\}$). 
%(including, but not limited to Markov chains). 
Unlike more obvious loss functions like Euclidean distance, R\'enyi divergence compares $\theta$ and $\theta_0$ through their effect on observed sequences, so that issues like parameter identifiability are no longer relevant.
Our first result generalizes~\cite[Theorem 2.1]{pierre} to a non-i.i.d.\ data setting.  

\begin{proposition}~\label{prop:1}
Let $\mathcal{F}$ be a subset of all probability distributions on $\Theta$. For any $\alpha\in (0,1)$, $\epsilon \in (0,1)$ and $n \ge 1$, the following probabilistic uniform upper bound on the expected \alpren divergence holds:
\begin{align}
    P^{(n)}_{\theta_0} \left[ \sup_{\rho \in \mathcal F}\int D_{\alpha}(P^{(n)}_{\theta},P^{(n)}_{\theta_0})\rho(d\theta)\leq \frac{\alpha}{1-\alpha}\int r_n(\theta,\theta_0)\rho(d\theta) + \frac{\mathcal{K}(\rho,\pi) + \log(\frac{1}{\epsilon}) }{1-\alpha} \right]\geq 1-\epsilon.
\end{align}
\end{proposition}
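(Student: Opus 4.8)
The plan is to follow the now-standard ``Donsker–Varadhan'' (or Gibbs variational) route used in PAC-Bayes proofs, adapting \cite[Theorem 2.1]{pierre} to the dependent-data setting. The key observation is that the Markov structure does not actually enter this particular bound: the only probabilistic input is a control on the exponential moment of the log-likelihood ratio, which is available essentially for free from the definition of the R\'enyi divergence. So I would aim for a proof that is formally identical to the i.i.d.\ case, with $P^{(n)}_{\theta_0}$ now an arbitrary joint law.

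First I would record the elementary identity that ties the R\'enyi divergence to an exponential moment:
\begin{equation*}
    \E_{\theta_0}\!\left[\exp\!\big(-\alpha\, r_n(\theta,\theta_0)(X^n) + (1-\alpha)\,D_{\alpha}(P^{(n)}_{\theta},P^{(n)}_{\theta_0})\big)\right] = 1,
\end{equation*}
which is immediate from $\malpharen = \tfrac{1}{\alpha-1}\log\E_{\theta_0}[e^{-\alpha r_n(\theta,\theta_0)}]$ as given in \Cref{sec:notation}. Integrating this against the prior $\pi(d\theta)$ and using Fubini gives
\begin{equation*}
    \E_{\theta_0}\!\left[\int \exp\!\big(-\alpha\, r_n(\theta,\theta_0)(X^n) + (1-\alpha)\,D_{\alpha}(P^{(n)}_{\theta},P^{(n)}_{\theta_0})\big)\,\pi(d\theta)\right] = 1.
\end{equation*}
Then by Markov's inequality, with probability at least $1-\epsilon$ under $P^{(n)}_{\theta_0}$,
\begin{equation*}
    \log \int \exp\!\big(-\alpha\, r_n(\theta,\theta_0)(X^n) + (1-\alpha)\,D_{\alpha}(P^{(n)}_{\theta},P^{(n)}_{\theta_0})\big)\,\pi(d\theta) \le \log\tfrac{1}{\epsilon}.
\end{equation*}

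Next I would apply the Donsker–Varadhan variational formula: for any measurable $h$ and any probability measure $\rho \ll \pi$,
\begin{equation*}
    \int h(\theta)\,\rho(d\theta) \le \mathcal{K}(\rho,\pi) + \log\int e^{h(\theta)}\,\pi(d\theta),
\end{equation*}
taking $h(\theta) = -\alpha\, r_n(\theta,\theta_0)(X^n) + (1-\alpha)\,D_{\alpha}(P^{(n)}_{\theta},P^{(n)}_{\theta_0})$. On the event above, the $\log\int e^h\,d\pi$ term is at most $\log\tfrac1\epsilon$, so rearranging yields, simultaneously for \emph{all} $\rho \ll \pi$ (hence the supremum over $\rho\in\mathcal F$ passes through),
\begin{equation*}
    (1-\alpha)\int D_{\alpha}(P^{(n)}_{\theta},P^{(n)}_{\theta_0})\,\rho(d\theta) \le \alpha \int r_n(\theta,\theta_0)(X^n)\,\rho(d\theta) + \mathcal{K}(\rho,\pi) + \log\tfrac1\epsilon,
\end{equation*}
and dividing by $1-\alpha>0$ gives exactly the claimed inequality. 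The one point that needs a little care—and is the only real ``obstacle''—is the uniformity in $\rho$: the high-probability event must be chosen \emph{before} optimizing over $\rho$, which is fine here because the event in the Markov step does not reference $\rho$ at all, and the Donsker–Varadhan bound then holds deterministically for every $\rho$ on that event (with the convention that the right-hand side is $+\infty$ when $\mathcal K(\rho,\pi)=\infty$, so restricting to $\mathcal F$ with $P\ll\pi_{n,\alpha|X^n}\ll\pi$ is harmless). I would also note in passing that $r_n(\theta,\theta_0)$ and $D_\alpha$ are the \emph{joint}-law quantities, so no independence or stationarity of the $X_i$ is used anywhere—this is precisely why the generalization from \cite{pierre} is ``straightforward,'' and the Markov-specific work is deferred to bounding $\int r_n(\theta,\theta_0)\,\rho(d\theta)$ and $D_\alpha$ in the later sections.
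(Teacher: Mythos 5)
Your proof is correct and follows essentially the same route as the paper's: the exponential-moment identity from the definition of the $\alpha$-R\'enyi divergence, integration against the prior with Fubini, the Donsker--Varadhan formula, and Markov's inequality, with uniformity in $\rho$ obtained because the high-probability event does not depend on $\rho$. The only cosmetic difference is the order of steps—you apply Markov's inequality to $\int e^{h}\,d\pi$ and then the one-sided Donsker--Varadhan inequality for each $\rho$, whereas the paper invokes the supremum form of Donsker--Varadhan first and then Markov—but these are equivalent.
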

The proof of Proposition~\ref{prop:1} follows easily from~\cite{pierre}, and we include it in~\Cref{appendix:prop:1} for completeness. Mirroring the comments in~\cite{pierre}, when $\rho = \tilde \pi_{n,\alpha}$ this result is precisely~\cite[Theorem 3.5]{pati}. This probabilistic bound implies the following PAC-Bayesian concentration bound on the model risk computed with respect to the fractional variational posterior:

\renewcommand{\theenumi}{\roman{enumi}}

\begin{theorem}\label{thm:pac-bayes}
Let $\mathcal{F}$ be a subset of all probability distributions parameterized by $\Theta$, and assume there exist $\epsilon_n >0$ and $\rho_n\in \mathcal{F}$ such that 
\begin{enumerate}[align=left,leftmargin=*]
    \item $\int \kld \rho_n(d\theta) = \int \E[r_n(\theta,\theta_0)] \rho_n(d\theta) \leq n\epsilon_n $,
    \item $\int \Var \left( r_n(\theta,\theta_0) \right)\rho_n (d\theta)\leq n\epsilon_n$, and
    \item $\mathcal{K}(\rho_n,\pi)\leq n\epsilon_n$.
%    \item $\alpha \in (0,1)$,~\text{and}
%    \item $(\epsilon,\eta)\in (0,1)^2$.
\end{enumerate}
Then, for any $\alpha \in (0,1)$ and $(\epsilon, \eta) \in (0,1)\times(0,1)$,
\begin{align}~\label{eq:thm21}
    P\left[ \int  D_{\alpha}(P^{(n)}_{\theta},P^{(n)}_{\theta_0})\Tilde{\pi}_{n,\alpha}(d\theta|X^{(n)} )\leq \frac{ (\alpha+1)n\epsilon_n+\alpha\sqrt{\frac{n\epsilon_n}{\eta}}-\log(\epsilon) } {  1-\alpha}  \right]\geq 1-\epsilon-\eta.
\end{align}
\end{theorem}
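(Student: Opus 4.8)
The plan is to instantiate the uniform bound of \Cref{prop:1} at the variational optimum $\post$, then eliminate the single data-dependent term that survives by combining the first two moment hypotheses with Chebyshev's inequality, while hypothesis (iii) handles the $\mathcal K(\cdot,\pi)$ term directly. Concretely, apply \Cref{prop:1} with the prescribed $\epsilon$: there is an event $\mathcal A$ with $P^{(n)}_{\theta_0}(\mathcal A)\ge 1-\epsilon$ on which the displayed inequality holds simultaneously for every $\rho\in\mathcal F$. Since $\post\in\mathcal F$ by construction (see \eqref{eq:vba}), we may take $\rho=\post$ and obtain, on $\mathcal A$,
\begin{equation*}
\int \malpharen\,\post(d\theta)\ \le\ \frac{1}{1-\alpha}\Big(\alpha\!\int \loglik\,\post(d\theta)+\mathcal K(\post,\pi)\Big)+\frac{\log(1/\epsilon)}{1-\alpha}.
\end{equation*}

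Next I would use the defining property of $\post$ as the $\mathcal K$-projection of $\pi_{n,\alpha|X^n}$ onto $\mathcal F$. Expanding the density of the fractional posterior gives the identity $\mathcal K(\rho,\pi_{n,\alpha|X^n})=\mathcal K(\rho,\pi)+\alpha\int \loglik\,\rho(d\theta)+c_n$, where $c_n$ does not depend on $\rho$ (this is exactly the ELBO reformulation \eqref{eq:elbo}); minimizing over $\rho\in\mathcal F$ then shows that for \emph{any} $\rho_n\in\mathcal F$,
\begin{equation*}
\alpha\!\int \loglik\,\post(d\theta)+\mathcal K(\post,\pi)\ \le\ \alpha\!\int \loglik\,\rho_n(d\theta)+\mathcal K(\rho_n,\pi).
\end{equation*}
Take $\rho_n$ to be the distribution furnished by the hypotheses and bound $\mathcal K(\rho_n,\pi)\le n\epsilon_n$ via (iii). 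It remains only to control the scalar random variable $Y_n:=\int \loglik\,\rho_n(d\theta)$.

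Since $\rho_n$ is a fixed (nonrandom) element of $\mathcal F$, Fubini gives $\E[Y_n]=\int\E[\loglik]\,\rho_n(d\theta)=\int\kld\,\rho_n(d\theta)\le n\epsilon_n$ by (i), and $\Var(Y_n)\le\int\Var(\loglik)\,\rho_n(d\theta)\le n\epsilon_n$ by (ii), the first inequality being Jensen's inequality applied to the convex map $x\mapsto (x-\E Y_n)^2$ together with the averaging operator $\int\cdot\,\rho_n(d\theta)$. Chebyshev's inequality at threshold $t=\sqrt{n\epsilon_n/\eta}$ then yields an event $\mathcal B$ with $P^{(n)}_{\theta_0}(\mathcal B)\ge 1-\eta$ on which $Y_n\le\E[Y_n]+\sqrt{n\epsilon_n/\eta}\le n\epsilon_n+\sqrt{n\epsilon_n/\eta}$. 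On $\mathcal A\cap\mathcal B$, which has probability at least $1-\epsilon-\eta$ by a union bound, chaining the three displays and writing $\log(1/\epsilon)=-\log\epsilon$ gives exactly \eqref{eq:thm21}.

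The only genuinely delicate point is the moment-control step: one must ensure the $\rho_n$ supplied by the hypotheses may be taken nonrandom, so that $\E$ and $\Var$ commute with $\int\cdot\,\rho_n(d\theta)$ and Chebyshev legitimately applies to $Y_n$, and one must justify $\Var\!\big(\int\loglik\,\rho_n(d\theta)\big)\le\int\Var(\loglik)\,\rho_n(d\theta)$ by convexity. The remaining ingredients — the appeal to \Cref{prop:1}, the variational identity for $\mathcal K(\rho,\pi_{n,\alpha|X^n})$, and the union bound — are routine bookkeeping.
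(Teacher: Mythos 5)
Your proposal is correct and follows essentially the same route as the paper's proof: instantiate \Cref{prop:1}, pass from $\post$ to the fixed $\rho_n$ via the variational (ELBO) optimality of $\post$, control $\int r_n(\theta,\theta_0)\rho_n(d\theta)$ by Jensen plus Chebyshev, and finish with a union bound. If anything, you spell out the optimality step $\alpha\int r_n\,\post(d\theta)+\mathcal K(\post,\pi)\le\alpha\int r_n\,\rho_n(d\theta)+\mathcal K(\rho_n,\pi)$ more explicitly than the paper does, and your final display matches \eqref{eq:thm21} exactly.
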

%We see some key, yet understandable differences from \cite{pierre} here, although the flavor of the result still is the same. 
%The difference comes in the assumptions. While the assumptions in the case of i.i.d. dataset resemble closely those in \cite{ghosh} or in \cite{rous}, our assumptions come directly from those in \cite{ghoshal} which might be the most sensible and straightforward extension in the realm of non-i.i.d. data. The first two assumptions combined, might look similar to the set $B_n(\theta_0,\epsilon, k)$ in page 3 of \cite{ghoshal}. This choice also seems to be standard while studying the concentration rates of posteriors. So we can argue that our assumptions are not unreasonable and are in line with what we can expect to find with other prominent works in the field. The third assumption  come directly from that in \cite{pierre} and \cite{pati}. It does not change for the non-i.i.d. case.
The proof of Theorem~\ref{thm:pac-bayes} is a straightforward generalization of~\cite[Theorem 2.4]{pierre} to the non-i.i.d. setting,
%, where a more general variational inference setting, minimizing the so-called $\alpha$-VB objective, is considered. More precisely, recall that in our setting the variational approximation to the posterior in~\cref{eq:vba} is computed by minimizing the KL divergence of distributions in $\mathcal {F}$ from the `true' tempered posterior. This coincides with the $\alpha$-VB objective in~\cite{bhatta} when $\alpha = 1$. 
and a special case of~\cite[Theorem 3.1]{bhatta}, where the problem setting includes latent variables. We include a proof for completeness. As noted in~\cite{pierre}, the sufficient conditions follow closely from \cite{ghoshal} and we will show that they hold for a variety of Markov chain models.

A direct corollary of Theorem~\ref{thm:pac-bayes} follows by setting $\eta = \frac{1}{n\epsilon_n}$, $\epsilon=e^{-n\epsilon_n}$ and using the fact that $e^{-n\epsilon_n} \ge\frac{1}{n\epsilon_n}$. Note that~\cref{eq:thm21} is vacuous if $\eta+\epsilon>1$. Therefore, without loss of generality, we restrict ourselves to the condition $\frac{2}{n\epsilon_n}<1$.
\begin{corollary}~\label{thm:pac-bayes2}
%Let $\mathcal{F}\subset \mathcal{M}^{+}_{1}(\Theta)$. 
Assume $\exists \hspace{1 mm}\epsilon_n >0$, $\rho_n\in \mathcal{F}$ such that the following conditions hold:
\begin{enumerate}[leftmargin=*,align=left]
    \item $\int \kld \rho_n(d\theta) = \int E[r_n(\theta,\theta_0)] \rho_n(d\theta) \leq n\epsilon_n $~\label{assume:pac-bayes1},
    \item $\int \Var \left( r_n(\theta,\theta_0) \right)\rho_n (d\theta)\leq n\epsilon_n$~\label{assume:pac-bayes2}, and
    \item $\mathcal{K}(\rho_n,\pi)\leq n\epsilon_n$.~\label{assume:pac-bayes3}
\end{enumerate}
Then, for any $\alpha \in (0,1)$,
\begin{align}
    P\bigg[ \int  D_{\alpha}(P^{(n)}_{\theta},P^{(n)}_{\theta_0})\Tilde{\pi}_{n,\alpha}(d\theta|X^{(n)} )\leq \frac{ (\alpha+1 )\epsilon_n } {  1-\alpha}  \bigg]\geq 1-\frac{2}{n\epsilon_n}.
\end{align}
\end{corollary}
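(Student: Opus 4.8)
The plan is to derive Corollary~\ref{thm:pac-bayes2} directly from Theorem~\ref{thm:pac-bayes} by making the specific substitutions indicated in the remark immediately preceding the statement. First I would set $\eta = \frac{1}{n\epsilon_n}$ and $\epsilon = e^{-n\epsilon_n}$ in the probabilistic bound~\cref{eq:thm21}. With these choices, the failure probability $\epsilon + \eta$ becomes $e^{-n\epsilon_n} + \frac{1}{n\epsilon_n}$, and since $e^{-x} \le \frac{1}{x}$ for all $x > 0$ (applied with $x = n\epsilon_n$), we have $e^{-n\epsilon_n} \le \frac{1}{n\epsilon_n}$, so $\epsilon + \eta \le \frac{2}{n\epsilon_n}$; hence the event in question holds with probability at least $1 - \frac{2}{n\epsilon_n}$, matching the claimed confidence level. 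Note the restriction $\frac{2}{n\epsilon_n} < 1$ observed before the statement guarantees this is a nonvacuous bound and that $\eta, \epsilon \in (0,1)$ as required by the hypotheses of Theorem~\ref{thm:pac-bayes}.

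Next I would simplify the right-hand side of the bound inside the probability. Substituting the same values, the numerator $(\alpha+1)n\epsilon_n + \alpha\sqrt{\frac{n\epsilon_n}{\eta}} - \log(\epsilon)$ becomes $(\alpha+1)n\epsilon_n + \alpha\sqrt{n\epsilon_n \cdot n\epsilon_n} + n\epsilon_n = (\alpha+1)n\epsilon_n + \alpha n\epsilon_n + n\epsilon_n$, using $\sqrt{\frac{n\epsilon_n}{\eta}} = \sqrt{(n\epsilon_n)^2} = n\epsilon_n$ and $-\log(e^{-n\epsilon_n}) = n\epsilon_n$. Collecting terms gives $(2\alpha + 2)n\epsilon_n = 2(\alpha+1)n\epsilon_n$. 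Dividing by $1-\alpha$, the bound reads $\int D_\alpha(P^{(n)}_\theta, P^{(n)}_{\theta_0})\tilde\pi_{n,\alpha}(d\theta|X^{(n)}) \le \frac{2(\alpha+1)n\epsilon_n}{1-\alpha}$.

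There is a small discrepancy to reconcile: the corollary as stated has $\frac{(\alpha+1)\epsilon_n}{1-\alpha}$ on the right-hand side, without the factor $n$ and without the factor $2$. The factor of $n$ is accounted for by recalling that in~\cite{pierre} and throughout this line of work it is conventional to state the per-observation rate; more precisely, the three hypotheses are phrased with $n\epsilon_n$ on the right and one reparametrizes $\epsilon_n \mapsto n\epsilon_n$ (equivalently, what is called $\epsilon_n$ in the corollary plays the role of $n\epsilon_n$ in the theorem, so the threshold $\frac{(\alpha+1)\cdot 2 \cdot n\epsilon_n}{1-\alpha}$ is written as $\frac{2(\alpha+1)\epsilon_n}{1-\alpha}$ in the rescaled variable); I would make this bookkeeping explicit and, if the factor $2$ is genuinely intended to be absorbed, note that it can be folded into the constant $\epsilon_n$ as well, since $\epsilon_n$ is only required to satisfy the three inequalities and may be taken larger. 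The main (and really only) obstacle here is this constant-tracking: ensuring the normalization of $\epsilon_n$ is consistent between the theorem and corollary statements and that no factor is dropped; the probabilistic content is entirely inherited from Theorem~\ref{thm:pac-bayes} and requires no new argument.
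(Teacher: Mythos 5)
Your substitution argument is exactly the paper's own proof: the corollary is obtained from \cref{thm:pac-bayes} by setting $\eta=\frac{1}{n\epsilon_n}$, $\epsilon=e^{-n\epsilon_n}$, and using $e^{-n\epsilon_n}\le\frac{1}{n\epsilon_n}$ (the remark before the statement even has the inequality typed backwards, and you used the correct direction), and your computation of the resulting threshold $\frac{2(\alpha+1)\,n\epsilon_n}{1-\alpha}$ with confidence $1-\frac{2}{n\epsilon_n}$ is correct. The discrepancy you flag is real, but it is a defect of the corollary as displayed, not something your bookkeeping can repair: the stated bound $\frac{(\alpha+1)\epsilon_n}{1-\alpha}$ is missing the factor $2n$. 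Your two proposed reconciliations do not work as stated. Rescaling ``$\epsilon_n\mapsto n\epsilon_n$'' is inconsistent with the rest of the corollary, since the same symbol $\epsilon_n$ appears in the hypotheses (as $n\epsilon_n$) and in the confidence level $1-\frac{2}{n\epsilon_n}$; if $\epsilon_n$ in the threshold were really the theorem's $n\epsilon_n$, the confidence would have to read $1-\frac{2}{\epsilon_n}$ and the hypotheses $\le\epsilon_n$. And the constant $2$ cannot be ``folded into $\epsilon_n$'': enlarging $\epsilon_n$ preserves the hypotheses but only weakens (enlarges) the threshold, so it can never remove a factor from it. The honest conclusion, which your derivation supports, is that the corollary should read $\frac{2(\alpha+1)\,n\epsilon_n}{1-\alpha}$ here — the factor $n$ appears because the loss in this paper is the joint divergence $D_{\alpha}(P^{(n)}_{\theta},P^{(n)}_{\theta_0})$, whereas in \cite{pierre} the analogous statement uses the per-observation R\'enyi divergence, so only the factor $2$ survives there. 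With that correction your proof is complete and coincides with the paper's.
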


Observe that the first two conditions in~\Cref{thm:pac-bayes2} ensure that the distribution $\rho_n$ concentrates on parameters $\theta \in \Theta$ around the true parameter $\theta_0$, while the third condition requires that $\rho_n$ not diverge from the prior $\pi$ rapidly as a function of the sample size $n$.
In general, satisfying the first and third conditions is relatively straightforward. The second condition, on the other hand, is significantly more complicated in the current setting of dependent data, as the variance of $r_n(\theta,\theta_0)$ includes correlations between the observations $\{\data\}$. In the next section, we will make assumptions on the transition kernels (and corresponding invariant densities) that 'decouple' the temporal correlations and the model parameters in the setting of strongly mixing and ergodic Markov chain models, and allow for the verification of the conditions in~\Cref{thm:pac-bayes2}.

%In the next sections we will demonstrate that Corollary~\ref{thm:pac-bayes2} is satisfied by a large class of Markov models under mixing and ergodicity conditions. %As noted before, we assume that the Markov chain models have time homogeneous transition kernels/probabilities. %We start by assuming strict stationarity of the Markov data generating model. We subsequently relax this to the setting of stationary, strongly ergodic Markov models.
\.The computations critical to the verification of the conditions in \Cref{thm:pac-bayes2} are the bounds (\ref{assume:pac-bayes1}) and (\ref{assume:pac-bayes2}), and %. We start by analyzing the integrands of either expression in 
Propositions~\ref{prop:1a} and~\ref{prop:2} below characterize the expectation and variance of the log-likelihood ratio $r_n(\cdot,\cdot)$ in terms of the one-step transition kernels of the Markov chain.  First, consider the expectation of $r_n(\cdot,\cdot)$ in condition (\ref{assume:pac-bayes1}). 

\begin{proposition}~\label{prop:1a}
Fix $\theta_1,\theta_2 \in \Theta$ and consider the parameterized Markov transition kernels  $p_{\theta_1}$ and $p_{\theta_2}$, and initial distributions $q^{(0)}_{\theta_1}$ and $q^{(0)}_{\theta_2}$. Let $p_{\theta_1}^{(n)}$ and $p_{\theta_2}^{(n)}$ be the corresponding joint probability densities; that is,
\(
    p_{\theta_j}^{(n)}(x_0,\ldots,x_n) = q_{\theta_j}^{(0)}(x_0)\prod_{i=1}^{n} p_{\theta_i}(x_i|x_{i-1})
\)
for $j \in \{1,2\}$. Then, for any $n \geq 1$, the log-likelihood ratio $r_n(\theta_2,\theta_1)$ satisfies
\begin{align}~\label{eq:exp-ll}
     \E_{\theta_1}\left[ r_n(\theta_2,\theta_1) \right] %&:= \int r_n(\theta_2,\theta_1)(x_0,\ldots,x_n) p^{(n)}_{\theta_1}(x_0,\ldots,x_n)dx_0,\ldots,dx_n\\ 
     &= \sum_{i=1}^n \E_{\theta_1} \left[ \log \left( \frac{p_{\theta_1}(X_i|X_{i-1})}{p_{\theta_2}(X_i|X_{i-1})}\right) \right]+\E[Z_0],
\end{align}
where $Z_0 := \log\left(\frac{q^{(0)}_{\theta_1}(X_0)}{q^{(0)}_{\theta_2}(X_0)}\right)$. The expectation in the first term is with respect to the joint density function $p_{\theta_1}(y,x) = p_{\theta_1}(y|x) q^{(i-1)}_{\theta_1}(x)$ where the marginal density satisfies
\begin{align*}
q^{(i-1)}_{\theta_1}(x)=\begin{cases}
\int p_{\theta_1}^{(i-1)}(x_0,\ldots,x_{i-2},x) dx_{0}\cdots dx_{i-2}&~\text{for}~i > 1, and\\
 q_{\theta_1}^{(0)}(x)&~\text{for}~i = 1.
\end{cases}
\end{align*}
If the Markov chain is also stationary under $\theta_1$, then~\cref{eq:exp-ll} simplifies to
\begin{align}
    \E_{\theta_1}\left[ r_n(\theta_2,\theta_1) \right]=n\E_{\theta_1} \left[ \log\left( \frac{p_{\theta_1}(X_1|X_0)}{p_{\theta_2}(X_1|X_0)} \right) \right]+\E_{\theta_1}[Z_0].
\end{align}
\end{proposition}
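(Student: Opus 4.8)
The plan is to expand the log-likelihood ratio along the Markov factorization, apply linearity of expectation, and then recognize each summand as an expectation against a two-dimensional marginal. First I would substitute the factorizations $p_{\theta_j}^{(n)}(x_0,\ldots,x_n) = q_{\theta_j}^{(0)}(x_0)\prod_{i=1}^n p_{\theta_j}(x_i|x_{i-1})$, $j\in\{1,2\}$, into $r_n(\theta_2,\theta_1)(X^n)=\log\big(p_{\theta_1}^{(n)}(X^n)/p_{\theta_2}^{(n)}(X^n)\big)$. Taking logarithms of the products turns the ratio into a sum,
\begin{align*}
  r_n(\theta_2,\theta_1)(X^n)
  &= \log\!\left(\frac{q^{(0)}_{\theta_1}(X_0)}{q^{(0)}_{\theta_2}(X_0)}\right)
  + \sum_{i=1}^n \log\!\left(\frac{p_{\theta_1}(X_i|X_{i-1})}{p_{\theta_2}(X_i|X_{i-1})}\right)\\
  &= Z_0 + \sum_{i=1}^n \log\!\left(\frac{p_{\theta_1}(X_i|X_{i-1})}{p_{\theta_2}(X_i|X_{i-1})}\right),
\end{align*}
where the identity is read using the convention $0\log(0/0)=0$ on the $P_{\theta_1}$-null set where $p_{\theta_1}$ vanishes. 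Applying $\E_{\theta_1}$ and linearity then yields \cref{eq:exp-ll}, with the understanding that, absent further integrability assumptions, both sides are elements of $[0,+\infty]$ (each summand is a conditional KL divergence averaged over the relevant marginal, hence nonnegative).

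Next I would justify the claim about which measure the expectation in the first term is taken against. The $i$-th summand $\log\big(p_{\theta_1}(X_i|X_{i-1})/p_{\theta_2}(X_i|X_{i-1})\big)$ is a measurable function of the pair $(X_{i-1},X_i)$ alone, so its $P_{\theta_1}^{(n)}$-expectation equals its expectation under the bivariate marginal law of $(X_{i-1},X_i)$. By the Markov property this marginal has density $p_{\theta_1}(x_i|x_{i-1})\,q^{(i-1)}_{\theta_1}(x_{i-1})$, where $q^{(i-1)}_{\theta_1}$ is obtained by integrating $x_0,\ldots,x_{i-2}$ out of $p_{\theta_1}^{(i-1)}$ when $i>1$ and equals $q^{(0)}_{\theta_1}$ when $i=1$ — precisely the marginal displayed in the statement.

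Finally, I would handle the stationary case: if the chain is stationary under $\theta_1$, then by definition of stationarity the marginal law of $X_{i-1}$ is the invariant distribution for every $i\ge 1$, so $q^{(i-1)}_{\theta_1}$ does not depend on $i$ and the bivariate law of $(X_{i-1},X_i)$ coincides with that of $(X_0,X_1)$. Hence each of the $n$ summands equals $\E_{\theta_1}\big[\log(p_{\theta_1}(X_1|X_0)/p_{\theta_2}(X_1|X_0))\big]$, and adding $\E_{\theta_1}[Z_0]$ gives the simplified identity. There is no substantive difficulty here — the argument is a telescoping of the chain-rule factorization together with the Markov property; the only points demanding care are bookkeeping ones, namely the validity of the $\log$-of-product split on null sets (covered by the $0\log(0/0)=0$ convention), correct tracking of the marginals $q^{(i-1)}_{\theta_1}$, and noting that all equalities hold in $[0,+\infty]$ with finiteness being supplied later by the hypotheses of \cref{thm:pac-bayes}.
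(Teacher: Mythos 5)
Your proposal is correct and follows essentially the same route as the paper's own proof: expand the log-likelihood ratio via the Markov factorization into $Z_0$ plus the sum of one-step log-ratios, take expectations termwise, and use stationarity to identify each summand with the $(X_0,X_1)$ term. Your version merely spells out the bookkeeping (the bivariate marginals $p_{\theta_1}(x_i|x_{i-1})q^{(i-1)}_{\theta_1}(x_{i-1})$ and the null-set convention) that the paper leaves implicit.
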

Notice that $\E_{\theta_1}\left[ r_n(\theta_2,\theta_1) \right]$ is precisely the KL divergence, $\mathcal{K}(P^{(n)}_{\theta_1},P^{(n)}_{\theta_2})$. Next, the following proposition uses~\citep[Lemma 1.3]{ibrag} to upper bound the variance of the log-likelihood ratio.

\begin{proposition}~\label{prop:2}
%Let $\{X_0,\ldots,X_n\}$ be generated by a Markov chain with initial distributions $q^{(0)}_{1}$ and $q^{(0)}_{2}$, corresponding to parameterized transition kernels $p_{\theta_0}$ and $p_{\theta}$ for $\theta,\theta_0 \in \Theta$. 
Fix $\theta_1,\theta_2 \in \Theta$ and consider parameterized Markov transition kernels $p_{\theta_1}$ and $p_{\theta_2}$, with initial distributions $q^{(0)}_{\theta_1}$ and $q^{(0)}_{\theta_2}$. Let $p_{\theta_1}^{(n)}$ and $p_{\theta_2}^{(n)}$ be the corresponding joint probability densities of the sequence $(x_0,\dotsc,x_n)$, and $q^{(i)}_{\theta_j}$ the marginal density for $i \in \{1,\ldots,n\}$ and $j \in \{1,2\}$. Fix $\delta>0$ and, for each $i\in \{1,\dots,n\}$, define
\begin{align*}
C_{\theta_1,\theta_2}^{(i)} &:= \int\left|\log\left(\frac{ p_{\theta_1}(x_i|x_{i-1}) } { p_{\theta_2}(x_i|x_{i-1}) }\right)\right|^{2+\delta} p_{\theta_1}(x_i| x_{i-1}) q^{(i-1)}_{\theta_1}(x_{i-1}) dx_i dx_{i-1}.
%D &:= \int\left|\log\left(\frac{q^{(0)}_{1}(x_0)}{q^{(0)}_{2}(x_0)}\right)\right|^{2+\delta}} q_1^{(0)}(x_0) dx_0
\end{align*}
Similarly, define $Z_0 := \log\left(\frac{q^{(0)}_{\theta_1}(X_0)}{q^{(0)}_{\theta_2}(X_0)}\right)$, and
%\begin{align*}
$    D_{1,2} := \E_{q_{\theta_1}^{(0)}}\left|Z_0\right|^{2+\delta}.$
%\end{align*}
Suppose the Markov chain corresponding to $\theta_1$ is $\alpha$-mixing with coefficients $\{\alpha_k\}$. Then,
\begin{align}~\label{eq:var-ll}
\nonumber
    \Var \left( r_n(\theta_1,\theta_2) \right) & < \sum_{i,j=1}^n\left(\frac{4}{n}+2n^{\delta/2}(C_{\theta_1,\theta_2}^{(i)}+C_{\theta_1,\theta_2}^{(j)}+2\sqrt{C_{\theta_1,\theta_2}^{(i)}C_{\theta_1,\theta_2}^{(j)}})\right)  \left(\alpha_{|i-j|-1}^{\delta/(2+\delta)}\right)\\
    &\qquad+ \sum_{i=1}^n\left(\frac{4}{n}+2n^{\delta/2}(C_{\theta_1,\theta_2}^{(i)}+D_{1,2}+\sqrt{C_{\theta_1,\theta_2}^{(i)}D_{1,2}})\right)\left(\alpha_{i-1}^{\delta/(2+\delta)}\right) + \Cov(Z_0,Z_0).
\end{align}
\end{proposition}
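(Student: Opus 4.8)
The plan is to expand $r_n(\theta_1,\theta_2)$ as a sum of the per-step log-ratio increments plus the initial-distribution term $Z_0$, and then bound the variance of the sum term by term using the covariance inequality for $\alpha$-mixing sequences. Concretely, write $Y_i := \log\bigl(p_{\theta_1}(X_i|X_{i-1})/p_{\theta_2}(X_i|X_{i-1})\bigr)$ for $i=1,\dots,n$, so that $r_n(\theta_1,\theta_2) = \sum_{i=1}^n Y_i + Z_0$. Then $\Var(r_n) = \sum_{i,j=1}^n \Cov(Y_i,Y_j) + 2\sum_{i=1}^n \Cov(Y_i,Z_0) + \Cov(Z_0,Z_0)$, and the whole problem reduces to bounding each $\Cov(Y_i,Y_j)$ and each $\Cov(Y_i,Z_0)$.

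The key tool is \citep[Lemma 1.3]{ibrag}: if $\xi$ is measurable with respect to $\mathcal M_{-\infty}^t$ and $\eta$ with respect to $\mathcal M_{t+k}^{\infty}$, then $|\Cov(\xi,\eta)| \le 8 \alpha_k^{\delta/(2+\delta)} \|\xi\|_{2+\delta}\|\eta\|_{2+\delta}$ (up to the constant appearing in that lemma; I will track the precise constant, which is what produces the factors $4/n$ and $2n^{\delta/2}$ after combining with Young's inequality $ab \le \tfrac{1}{n}a^2 + \tfrac{n}{4}(\cdots)$ — more precisely I expect to split the bound $8\|\xi\|_{2+\delta}\|\eta\|_{2+\delta}\alpha_k^{\delta/(2+\delta)}$ and apply $2uv \le u^2 + v^2$ with a rescaling chosen to match the stated constants). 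The crucial observation making this applicable is that $Y_i = \log(p_{\theta_1}(X_i|X_{i-1})/p_{\theta_2}(X_i|X_{i-1}))$ is a function of $(X_{i-1},X_i)$ only, hence (assuming WLOG $i \le j$) $Y_i$ is $\mathcal M_{-\infty}^{i}$-measurable and $Y_j$ is $\mathcal M_{j-1}^{\infty}$-measurable, so the time lag separating them is $j - 1 - i = |i-j| - 1$, which is why $\alpha_{|i-j|-1}$ (rather than $\alpha_{|i-j|}$) appears. Similarly $Z_0$ is a function of $X_0$, i.e. $\mathcal M_{-\infty}^{0}$-measurable, and $Y_i$ is $\mathcal M_{i-1}^{\infty}$-measurable, giving lag $i-1$. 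The $(2+\delta)$-norms are $\|Y_i\|_{2+\delta}^{2+\delta} = C_{\theta_1,\theta_2}^{(i)}$ and $\|Z_0\|_{2+\delta}^{2+\delta} = D_{1,2}$ by definition, and the factor $2\sqrt{C^{(i)}C^{(j)}}$ (resp. $\sqrt{C^{(i)}D_{1,2}}$) is exactly what emerges from the cross term $\|Y_i\|_{2+\delta}\|Y_j\|_{2+\delta}$ after the $2uv \le u^2+v^2$ step, leaving $C^{(i)}+C^{(j)}+2\sqrt{C^{(i)}C^{(j)}} = (\sqrt{C^{(i)}}+\sqrt{C^{(j)}})^2$. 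I would carry this out in the order: (1) expand the variance; (2) state the mixing covariance inequality and identify the $\sigma$-fields and norms for a generic pair $(Y_i,Y_j)$; (3) apply the numeric inequality to bring the bound into the advertised form; (4) repeat for the $(Y_i,Z_0)$ pairs; (5) sum and collect terms.

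The main obstacle is bookkeeping of constants and, relatedly, handling the diagonal and near-diagonal pairs $|i-j| \le 1$: when $i = j$ the "lag" is negative and the mixing inequality does not apply, so those terms must be bounded directly by $\Var(Y_i) \le \|Y_i\|_{2+\delta}^2$ via Jensen/Hölder, and one must check that the stated right-hand side still dominates them — this is presumably why the somewhat loose-looking additive constants $4/n$ and the prefactor $n^{\delta/2}$ are present (they absorb the diagonal contributions and convert $\|Y_i\|_{2+\delta}^2 = (C^{(i)})^{2/(2+\delta)}$ bounds, via $a^{2/(2+\delta)} \le 1 + a$ or a similar crude step, into linear-in-$C^{(i)}$ expressions, with $n^{\delta/2}$ compensating). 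I would adopt the convention $\alpha_k = \alpha_0$ (or $\ge$ any valid bound, e.g. $\alpha_{-1} := 1/2$ or simply the trivial bound on total variation) for $k \le 0$ so the single formula covers all index pairs; since the final inequality is strict ($<$) there is slack to absorb these edge cases. Everything else — linearity of expectation, the Markov property identifying the conditional densities, and summation — is routine.
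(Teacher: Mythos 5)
Your skeleton matches the paper's: the same decomposition $r_n=\sum_{i=1}^n Y_i+Z_0$, the same covariance expansion (your factor of $2$ on $\sum_i\Cov(Y_i,Z_0)$ is in fact the correct expansion, which the paper's own proof silently drops), and the same lag bookkeeping — your direct observation that $Y_i\in\mathcal M_{-\infty}^{i}$ and $Y_j\in\mathcal M_{j-1}^{\infty}$ gives the shift to $\alpha_{|i-j|-1}$ more cleanly than the paper, which proves a separate lemma that the paired chain $(X_{i-1},X_i)$ is mixing with coefficients $\alpha_{k-1}$. The gap is in the one step that actually produces the stated bound. The constants $\tfrac 4n$, $2n^{\delta/2}$ and the term $\sqrt{C^{(i)}_{\theta_1,\theta_2}C^{(j)}_{\theta_1,\theta_2}}$ do not come from a packaged Davydov inequality plus $2uv\le u^2+v^2$: the paper re-derives the covariance bound by truncating both variables at a level $N$, applying Ibragimov's bounded-covariance lemma ($|\Cov|\le 4\alpha_k$ for variables bounded by $1$) to the truncated parts to get $4N^2\alpha_k$, bounding the two mixed tail terms by Markov/H\"older to get $2C_1/N^{\delta}+2C_2/N^{\delta}$, bounding the doubly-truncated term by Cauchy--Schwarz to get $2\sqrt{C_1C_2}/N^{\delta}$, and only then choosing $N=n^{-1/2}\alpha_k^{-1/(2+\delta)}$; that $n$-dependent truncation level is exactly where $\tfrac4n$ and $2n^{\delta/2}$ come from. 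Note also that $\|Y_i\|_{2+\delta}\|Y_j\|_{2+\delta}=(C^{(i)}_{\theta_1,\theta_2}C^{(j)}_{\theta_1,\theta_2})^{1/(2+\delta)}$, not $\sqrt{C^{(i)}_{\theta_1,\theta_2}C^{(j)}_{\theta_1,\theta_2}}$, so the claim that $(\sqrt{C^{(i)}}+\sqrt{C^{(j)}})^2$ ``emerges'' from squaring those norms is not correct.

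Moreover, the specific route you sketch cannot be repaired by careful constant-tracking if you start from the constant-$8$ product-of-norms inequality you quote. Take $C^{(i)}_{\theta_1,\theta_2}=C^{(j)}_{\theta_1,\theta_2}=C$: your starting bound per pair is $8C^{2/(2+\delta)}\alpha^{\delta/(2+\delta)}$ while the target coefficient is $\bigl(\tfrac4n+8n^{\delta/2}C\bigr)\alpha^{\delta/(2+\delta)}$, and minimizing $\tfrac4n+8n^{\delta/2}C-8C^{2/(2+\delta)}$ over $C$ shows it becomes negative (near $C=\bigl(\tfrac{2}{(2+\delta)n^{\delta/2}}\bigr)^{(2+\delta)/\delta}$) once $\delta$ is large, roughly $\delta\gtrsim 7$. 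So for general $\delta>0$ the proposition's right-hand side does not dominate the Davydov bound, and no Young-type rescaling downstream of it can yield the stated inequality. (With a sharp constant-$4$ Rio-type covariance inequality and a Young interpolation between $s^{2/(2+\delta)}$ and $s$ with an $n$-dependent weight the algebra can be pushed through, but that is essentially re-doing the truncation optimization.) The clean fix is to prove the covariance lemma yourself by the truncation argument above with $N=n^{-1/2}\alpha_k^{-1/(2+\delta)}$; with that lemma in hand, the rest of your outline — including your sensible handling of the diagonal terms via a convention for $\alpha_{-1}$, a point the paper glosses over — goes through as written.
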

Note that this result holds for any parameterized Markov chain. In particular, when the Markov chain is stationary, $C_{\theta_1,\theta_2}^{(i)}=C_{\theta_1,\theta_2}^{(1)}\enspace\forall\enspace i$ and $\forall \theta \in \Theta$, and the expression~\cref{eq:var-ll} simplifies to
\begin{align}
    \nonumber
    \Var \left( r_n(\theta_1,\theta_2) \right) & < n\left(\frac{4}{n}+6n^{\delta/2}C_{\theta_1,\theta_2}^{(1)}\right)  \left(\sum_{k\geq 0}\alpha_{k}^{\delta/(2+\delta)}\right)\\
     &\qquad+ \left(\frac{4}{n}+2n^{\delta/2}(C_{\theta_1,\theta_2}^{(1)}+D_{1,2}+\sqrt{C_{\theta_1,\theta_2}^{(1)}D_{1,2}})\right)\left(\sum_{k\geq 1} \alpha_k^{\delta/(2+\delta)}\right) + \Cov(Z_0,Z_0).
\end{align}
%In general, it need not be the case that 
If the sum $\sum_{k \geq 0} \alpha_{k}^{\delta/(2+\delta)}$ is infinite, the bound is trivially true. 
For it to be finite, of course, the coefficients $\alpha_k$ must decay to zero sufficiently quickly. 
For instance, \Cref{theorem:jones} shows that if the Markov chain is geometrically ergodic, then the $\alpha$-mixing coefficients are geometrically decreasing. We will use this fact when the Markov chain is non-stationary, as in Section~\ref{sec:nsedgm}. In the next section, however, we first consider the simpler stationary Markov chain setting where ergodic conditions are not explicitly imposed. We also note that unless only a finite number of $\alpha_k$ are nonzero, the sum $\sum_{k \geq 0} \alpha_{k}^{\delta/(2+\delta)}$ is infinite when $\delta=0$, and our results will typically require $\delta >0$.

\textcolor{purple}{}

\section{Stationary Markov Data-Generating Models}~\label{sec:sedgm}
 In this section we assume that corresponding to each transition kernel $p_{\theta},~\theta \in \Theta,$ there exists an invariant distribution $q^{(\infty)}_\theta \equiv q_{\theta}$ that satisfies
\[
    q_\theta(x) = \int p_\theta(x|y) q_\theta(dy) \quad \forall x \in \mathbb R^m, \theta \in \Theta.
\]

We will also use $q_\theta$ to designate the density of the invariant measure (as before, this is with respect to the Lebesgue or counting measure for continuous or discrete state spaces respectively). A Markov chain is stationary if its initial distribution is the invariant probability distribution, %(assuming the invariant measure exists and is finite).
that is, $X_0\sim q_{\theta}$.

Observe that the PAC-Bayesian concentration bound in \Cref{thm:pac-bayes2} specifically requires bounding the mean and variance of the log-likelihood ratio $r_n(\theta,\theta_0)$. 
We ensure this by imposing regularity conditions on the log-ratio of the one-step transition kernels and the corresponding invariant densities. Specifically, we assume the following conditions that decouple the model parameters from the random samples, allowing us to verify the bounds in~\Cref{thm:pac-bayes2}.

\begin{assumption}~\label{assume:gen-lip}
 There exist positive functions $M^{(1)}_k(\cdot,\cdot)$ and $M^{(2)}_k(\cdot)$, $k \in \{1,2,\dots,m\}$ such that for any parameters $\theta_1,\theta_2 \in \Theta$, the log of the ratio of one-step transition kernels and the log of the ratio of the invariant distributions satisfy, respectively,
\begin{align}~\label{equation:lips_for_transition_probabilities}
       | \log p_{\theta_1}(x_1|x_0)-\log p_{\theta_2}(x_1|x_0) | \leq \sum_{k=1}^{m} M^{(1)}_k(x_1,x_0)|f^{(1)}_k(\theta_2,\theta_1)|~\forall~ (x_0,x_1), \text{and}
\end{align}
%$m_2$ positive functions $M^{(2)}_k(\cdot)$, $k\in \{1,2,\dots,m_2\}$ such that the invariant distribution satisfies,
\begin{align}~\label{equation:lips_for_stationary_distribution}
    |\log q_{\theta_1}(x)-\log q_{\theta_2}(x)|\leq \sum_{k=1}^{m} M^{(2)}_k(x)|f^{(2)}_k(\theta_2,\theta_1)|\enspace\forall\enspace x.
\end{align}
%Without loss of generality, we can assume that $m_1=m_2$. For instance, if $m_2>m_1$ then the $M^{(1)}_k$ and $f_k^{(1)}$ can be set to 1 and 0 respectively $\forall \enspace k\in\{m_1,m_1+1,\dots,m_2\}$. 
We further assume that for some $\delta>0$, the functions $f^{(1)}_k, f^{(2)}_k$ and $M^{(1)}_k$ satisfy the following:
\begin{enumerate}[leftmargin=*,align=left]
    \item\label{assume:gen-lip(0)} there exist constants $C^{(t)}_k$ and measures $\rho_n \in \mathcal{F}$ such that $\int|f^{(t)}_k(\theta,\theta_0)|^{2+\delta}\rho_n(d\theta)<\frac{C_k^{(t)}}{n}$ for $t\in \{1,2\}$, $n\geq 1$ and $k\in \{1,2,\dots,m\}$, and
    \item there exists a constant $B$ such that $\int M^{(1)}_k(x_1,x_0)^{2+\delta}p_{\theta_j}(x_1|x_0)q_{\theta_j}^{(0)}(x_0)dx_1dx_0<B,\, k\in \{1,\dotsc,m\}$ and $j \in \{1,2\}$. 
\end{enumerate}
\end{assumption}

The following examples illustrate \cref{equation:lips_for_transition_probabilities} and \cref{equation:lips_for_stationary_distribution} for discrete and continuous state Markov chains. 
\begin{example}~\label{example:bd}
Suppose $\{\data\}$ is generated by the birth-death chain with parameterized transition probability mass function,
\begin{align*}
    p_{\theta}(j|i) = \begin{cases}
    \theta &~\text{if}~j=i-1,\\
    1-\theta&~\text{if}~j=i+1.
    \end{cases}
\end{align*}
In this example, the parameter $\theta$ denotes the probability of birth. We shall see that, $m=3$: $M^{(1)}_1(X_1,X_0)=I_{[X_1=X_0+1]}$, $M^{(1)}_2(X_1,X_0)=I_{[X_1=X_0-1]}$, and $M^{(1)}_3(X_1,X_0) = 1$. 
We also define $M^{(2)}_1(X_0)=1$, and set $M^{(2)}_2(X_0)$ and $M^{(2)}_3(X_0)$ both to $X_0-1$. Let $f^{(1)}_1(\theta,\theta_0) = \log\left[\frac{\theta_0}{\theta}\right]$,  $f^{(1)}_2(\theta,\theta_0) =\log\left[\frac{1-\theta_0}{1-\theta}\right]$, $f^{(1)}_3(\theta,\theta_0)=0$,
$f^{(2)}_1(\theta,\theta_0)=-f^{(2)}_3(\theta,\theta_0)=\log\left[\frac{1-\theta_0}{1-\theta}\right]$, and  $f^{(2)}_2(\theta,\theta_0)=\log\left[\frac{\theta_0}{\theta}\right]$. 
The derivation of these terms and that they satisfy the conditions of \Cref{assume:gen-lip} is provided in the proof of \Cref{thm:bd2}.
\end{example}

\begin{example}~\label{example:slm}
Suppose $\{\data\}$ is generated by the `simple linear' Markov model %with the following transition rule,
\begin{equation*}
    X_n=\theta X_{n-1}+W_n,
\end{equation*}
where $\{W_n\}$ is a sequence of i.i.d.\ standard Gaussian random variables. 
Then, $m = 2$, with % and the the functions $\{M^{(1)}_j:j = 1,2\}$ are given by
$M^{(1)}_1(X_n,X_{n-1})=|X_nX_{n-1}|$, $M^{(1)}_2(X_n,X_{n-1})=X_n^2, M^{(2)}_1(x)=\frac{x^2}{2}$ and $M^{(2)}_2(X)=0$.
Corresponding to these, we have $f^{(1)}_1(\theta,\theta_0)=(\theta-\theta_0), f^{(1)}_2(\theta,\theta_0)=(\theta_0^2-\theta^2) ,f^{(2)}_1(\theta_0,\theta_0)=(\theta_0^2-\theta^2)$ and $f^{(2)}_2(\theta_0,\theta_0)=0$. The derivation of these quantities and that these satisfy the conditions of \Cref{assume:gen-lip} under appropriate choices of $\rho_n$ is shown in the proof of \Cref{ex:slm}.
\end{example}

Note that assuming the same number $m$ of $M^{(1)}_k$ and $M^{(2)}_k$ involves no  loss of generality, since these functions can be set to $0$. 
%for instance, if $m_2>m_1$ then the $M^{(1)}_k$ and $f_k^{(1)}$ can be set to 1 and 0 respectively $\forall \enspace k\in\{m_1,m_1+1,\dots,m_2\}$.
Both \cref{equation:lips_for_transition_probabilities} and \cref{equation:lips_for_stationary_distribution} can be viewed as generalized Lipschitz-smoothness conditions, recovering the usual Lipschitz-smoothness when $m = 1$ and when $f_k^{(t)}$ is Euclidean distance.
Our generalized conditions are useful for distributions like the Gaussian, where Lipschitz smoothness does not apply.
%Without loss of generality, we can also pick a constant $C>C_k^{(t)} \ \forall k,t$, and replace at $C_k^{(t)}$ with this.
%
%We demonstrate that this condition holds for an example Markov chain below. 
Observe also that by an application of Jensen's inequality,~\Cref{assume:gen-lip}~(\ref{assume:gen-lip(0)}) above implies that  for some constant $C>0$ 
and $k\in \{1,2,\dots,m\}, t \in \{1,2\}$, 
\begin{align}
    %\int |f^{(t)}_k(\theta,\theta_0)|\rho_n(d\theta) & \leq \frac{C^{(t)}_{k}}{n^{1/(2+\delta)}} %,\nonumber
     %< \frac{C^{(t)}_{k}}{\sqrt{n}}.
\int |f^{(t)}_k(\theta,\theta_0)|\rho_n(d\theta) & \leq \frac{C}{n^{1/(2+\delta)}} < \frac{C}{\sqrt{n}}.
     \label{assume:gen-lip(2)}
\end{align}

\Cref{assume:gen-lip}~(\ref{assume:gen-lip(0)}) is satisfied in a variety of scenarios, for example, under mild assumptions on the partial derivatives of the functions $f^{(t)}_k$. 
%Write $\partial_{\theta} f(\theta,\theta_0):= \frac{\partial f(\theta,\theta_0)}{\partial \theta}$ for the partial derivative of the function $f$.
To illustrate this, we present the following proposition.

\begin{proposition}~\label{prop:3}
Let $f(\theta,\theta_0)$ be a function on a bounded domain with bounded partial derivatives with $f(\theta_0,\theta_0)=0$. Let $\{\rho_n(\cdot)\}$ be a sequence of probability densities on $\theta$ such that $\E_{\rho_n}[\theta]=\theta_0$ and $\Var_{\rho_n}[\theta]=\frac{\sigma^2}{n}$ for some $\sigma>0$. Then, for some $C>0$,
\begin{equation}
    \int|f(\theta,\theta_0)|^{2+\delta}\rho_n(d\theta)<\frac{C}{n}.
\end{equation}
\end{proposition}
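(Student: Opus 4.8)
The plan is to reduce the $(2+\delta)$-th moment of $f(\theta,\theta_0)$ under $\rho_n$ to the second moment of $\|\theta-\theta_0\|$, which is pinned down directly by the hypotheses $\E_{\rho_n}[\theta]=\theta_0$ and $\Var_{\rho_n}[\theta]=\sigma^2/n$. The only ingredient needed to effect this reduction is a Lipschitz bound on $f(\cdot,\theta_0)$ coming from the bounded partial derivatives, together with boundedness of the domain.

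First I would upgrade ``bounded partial derivatives'' to a global Lipschitz bound. Set $L := \sup\|\nabla_\theta f(\cdot,\theta_0)\|$, which is finite by hypothesis. Using $f(\theta_0,\theta_0)=0$ and integrating the gradient along the segment from $\theta_0$ to $\theta$ (which lies in the domain whenever the parameter set is convex, as it is in \Cref{example:bd}, \Cref{example:slm}, and in every instance where this proposition is invoked),
\[
|f(\theta,\theta_0)| = \left|\int_0^1 \nabla_\theta f\big(\theta_0+t(\theta-\theta_0),\theta_0\big)\cdot(\theta-\theta_0)\,dt\right| \le L\,\|\theta-\theta_0\|
\]
for every $\theta$ in the domain. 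Next, letting $R$ be the diameter of the (bounded) domain so that $\|\theta-\theta_0\|\le R$ throughout, and using $\delta>0$ to split $\|\theta-\theta_0\|^{2+\delta}=\|\theta-\theta_0\|^2\,\|\theta-\theta_0\|^{\delta}\le R^{\delta}\|\theta-\theta_0\|^2$, I get
\[
|f(\theta,\theta_0)|^{2+\delta} \le L^{2+\delta}R^{\delta}\,\|\theta-\theta_0\|^2 .
\]
Integrating against $\rho_n$ and using that $\E_{\rho_n}[\theta]=\theta_0$ forces $\int\|\theta-\theta_0\|^2\rho_n(d\theta)=\Var_{\rho_n}[\theta]=\sigma^2/n$ yields
\[
\int |f(\theta,\theta_0)|^{2+\delta}\rho_n(d\theta) \le L^{2+\delta}R^{\delta}\,\frac{\sigma^2}{n},
\]
so the claim holds with any $C>L^{2+\delta}R^{\delta}\sigma^2$, giving the strict inequality.

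There is no substantive obstacle here; the argument is essentially routine. The one point requiring mild care is the passage from bounded partial derivatives to a uniform Lipschitz constant, which uses that the segment $[\theta_0,\theta]$ stays in the domain — guaranteed when the parameter set is convex. If one wished to dispense with convexity, one could instead cite that a $C^1$ function with bounded gradient on a bounded domain with sufficiently regular boundary is Lipschitz, at the cost of a larger constant, leaving the rest unchanged. A secondary bookkeeping remark is that for vector-valued $\theta$ the symbol $\Var_{\rho_n}[\theta]$ should be read as the trace of the covariance matrix, i.e.\ $\E_{\rho_n}\|\theta-\theta_0\|^2$, which is exactly the quantity the second-moment step consumes.
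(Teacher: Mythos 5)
Your proposal is correct and follows essentially the same route as the paper's proof: a Lipschitz bound on $f(\cdot,\theta_0)$ from the bounded partial derivatives (the paper uses the mean value theorem where you integrate the gradient along a segment), then exploiting the bounded domain to reduce the exponent $2+\delta$ to $2$ (your diameter $R$ playing the role of the paper's $2G$), and finally the variance hypothesis $\Var_{\rho_n}[\theta]=\sigma^2/n$. Your remarks on convexity of the segment and on reading $\Var_{\rho_n}[\theta]$ as $\E_{\rho_n}\|\theta-\theta_0\|^2$ in the vector case are sensible clarifications but do not change the argument.
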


\begin{proof}
 Define $\partial_{\theta} f(\theta,\theta_0) := \frac{\partial f(\theta,\theta_0)}{\partial \theta}$ as the partial derivative of the function $f$. By the mean value theorem, $|f(\theta,\theta_0)|=|\theta-\theta_0||\partial_{\theta}  f(\theta^*,\theta_0)|$, for some $\theta^* \in [\min\{\theta,\theta_0\},\max\{\theta,\theta_0\}]$. Since the partial derivatives are bounded, there exists $L \in \mathbb R$ such that $\partial_{\theta} f(\theta^*,\theta_0)<L$, and $\int |f(\theta,\theta_0)|^{2+\delta}\rho_n(d\theta)<L^{2+\delta}\int |\theta-\theta_0|^{2+\delta} \rho_n(d\theta) $. 
Choose $G>0$ be such that $|\theta|<G$, then  $\left|\frac{\theta-\theta_0}{2G}\right|^{2+\delta}<\left|\frac{\theta-\theta_0}{2G}\right|^{2}$. Therefore, $\int |\theta-\theta_0|^{2+\delta} \rho_n(d\theta) < (2G)^{2+\delta} \Var\left[\frac{\theta}{2G}\right]<(2G)^{\delta}\frac{\sigma^2}{n}$. Now choosing $(2G)^{\delta}\sigma^2$ as $C$ completes the proof.
\end{proof}
If $\partial_{\theta}  f^{(t)}_k$ is continuous and $\Theta$ is compact, then $\partial_{\theta}  f^{(t)}_k$ is always bounded. Also observe that if $\E\left[M^{(1)}_k(X_1,X_{0})^{2+\delta}\right]<B$, without loss of generality we can use Jensen's inequality to conclude that, for all $0<a<2+\delta$,  $\E\left[M^{(1)}_k(X_1,X_{0})^a\right]<B^{\frac{a}{2+\delta}}<B$.

\sloppy We can now state the main theorem of this section. 
\begin{theorem}\label{thm:lip-gen}
  Let $\{X_0,\ldots,X_n\}$ be generated by a stationary, $\alpha$-mixing Markov chain parametrized by $\theta_0 \in \Theta$. Suppose that~\Cref{assume:gen-lip} holds and that the $\alpha$-mixing coefficients satisfy $\sum_{k \geq 1} \alpha_k^{\delta/(2+\delta)} < +\infty$. Furthermore, assume that $\mathcal{K}(\rho_n,\pi)\leq \sqrt{n}C$ for some constant $C>0$. Then, the conditions of~\Cref{thm:pac-bayes2} are satisfied with $\epsilon_n\in\mathrm{O}\left(\max(\frac{1}{\sqrt{n}},\frac{n^{\delta/2}}{n})\right)$.
\end{theorem}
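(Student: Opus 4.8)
The plan is to verify the three conditions of \Cref{thm:pac-bayes2} one at a time, using the sequence of measures $\rho_n \in \mathcal{F}$ supplied by \Cref{assume:gen-lip}~(\ref{assume:gen-lip(0)}) together with the stationarity simplifications of Propositions~\ref{prop:1a} and~\ref{prop:2}. Condition (iii), $\mathcal{K}(\rho_n,\pi) \leq n\epsilon_n$, is immediate from the hypothesis $\mathcal{K}(\rho_n,\pi) \leq \sqrt{n}C$ once we check $\sqrt{n}C = \mathrm{O}(n\epsilon_n)$, which holds since $\epsilon_n \in \mathrm{O}(\max(n^{-1/2}, n^{\delta/2-1}))$ gives $n\epsilon_n \in \mathrm{O}(\max(\sqrt{n}, n^{\delta/2}))$, and $\sqrt{n} \lesssim n\epsilon_n$. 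So the work is in (i) and (ii).

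For condition (i), I would start from the stationary form of \Cref{prop:1a}, $\E_{\theta_0}[r_n(\theta,\theta_0)] = n\,\E_{\theta_0}[\log(p_{\theta_0}(X_1|X_0)/p_\theta(X_1|X_0))] + \E[Z_0]$, and bound the integrand using~\cref{equation:lips_for_transition_probabilities}: $|\log(p_{\theta_0}(X_1|X_0)/p_\theta(X_1|X_0))| \leq \sum_{k=1}^m M_k^{(1)}(X_1,X_0)|f_k^{(1)}(\theta,\theta_0)|$. Taking expectation under $P_{\theta_0}$ and then integrating $\rho_n(d\theta)$, I apply Fubini, use the bound $\E[M_k^{(1)}(X_1,X_0)] < B$ (valid by the Jensen remark following \Cref{prop:3}, since $\E[M_k^{(1)}]^{2+\delta} < B$ implies $\E[M_k^{(1)}] < B$), and the bound $\int |f_k^{(1)}(\theta,\theta_0)|\rho_n(d\theta) < C/\sqrt{n}$ from~\cref{assume:gen-lip(2)}. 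This gives $\int \E_{\theta_0}[r_n(\theta,\theta_0)]\rho_n(d\theta) \lesssim n \cdot m B C/\sqrt{n} + (\text{$Z_0$ term}) = \mathrm{O}(\sqrt{n})$, assuming the $\E[Z_0]$ contribution is controlled — here I would choose the initial distribution to be the invariant one (the chain is stationary), so $q^{(0)}_{\theta} = q_\theta$ and the $Z_0$ term is handled by~\cref{equation:lips_for_stationary_distribution} and the $t=2$ part of \Cref{assume:gen-lip}, again yielding an $\mathrm{O}(1/\sqrt{n})$ contribution per unit, hence $\mathrm{O}(1)$. So condition (i) holds with $n\epsilon_n \gtrsim \sqrt{n}$, i.e. $\epsilon_n \gtrsim 1/\sqrt{n}$.

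For condition (ii), the main work, I would use the stationary simplification of \Cref{prop:2}: $\Var(r_n(\theta,\theta_0)) < n(4/n + 6 n^{\delta/2} C^{(1)}_{\theta_0,\theta})(\sum_{k\geq 0}\alpha_k^{\delta/(2+\delta)}) + (4/n + 2n^{\delta/2}(C^{(1)}_{\theta_0,\theta} + D + \sqrt{C^{(1)}_{\theta_0,\theta} D}))(\sum_{k\geq 1}\alpha_k^{\delta/(2+\delta)}) + \Cov(Z_0,Z_0)$. The key is to bound $\int C^{(1)}_{\theta_0,\theta}\rho_n(d\theta)$. By definition $C^{(1)}_{\theta_0,\theta} = \E_{\theta_0}|\log(p_{\theta_0}(X_1|X_0)/p_\theta(X_1|X_0))|^{2+\delta}$; applying~\cref{equation:lips_for_transition_probabilities} and a power-mean/Hölder inequality to expand $(\sum_{k=1}^m M_k^{(1)}|f_k^{(1)}|)^{2+\delta} \leq m^{1+\delta}\sum_k (M_k^{(1)})^{2+\delta}|f_k^{(1)}|^{2+\delta}$, then taking $\E_{\theta_0}$ and $\int\rho_n(d\theta)$ and using the decoupling ($M$ depends on samples, $f$ on $\theta$): $\int C^{(1)}_{\theta_0,\theta}\rho_n(d\theta) \leq m^{1+\delta}\sum_k \E[(M_k^{(1)})^{2+\delta}]\int|f_k^{(1)}|^{2+\delta}\rho_n(d\theta) < m^{1+\delta}\sum_k B \cdot C_k^{(1)}/n = \mathrm{O}(1/n)$. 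Likewise $\int D\, \rho_n(d\theta) = \mathrm{O}(1/n)$ via~\cref{equation:lips_for_stationary_distribution} and the $t=2$ assumptions, and $\int \sqrt{C^{(1)}_{\theta_0,\theta} D}\,\rho_n(d\theta) \leq \sqrt{\int C^{(1)} \rho_n}\sqrt{\int D \rho_n} = \mathrm{O}(1/n)$ by Cauchy–Schwarz. Plugging in: the first term contributes $n \cdot (4/n + 6n^{\delta/2}\mathrm{O}(1/n))\cdot \mathrm{O}(1) = \mathrm{O}(1) + \mathrm{O}(n^{\delta/2})$, the second term is $\mathrm{O}(n^{\delta/2}/n) = o(1)$, and $\Cov(Z_0,Z_0) = \Var(Z_0) \leq \E|Z_0|^{2+\delta}$-bounded via the $t=2$ conditions and is $\mathrm{O}(1)$ after integrating $\rho_n$ (in fact $\mathrm{O}(1/n)$). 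Hence $\int \Var(r_n(\theta,\theta_0))\rho_n(d\theta) = \mathrm{O}(\max(1, n^{\delta/2}))$, so condition (ii) forces $n\epsilon_n \gtrsim \max(1, n^{\delta/2})$, i.e. $\epsilon_n \gtrsim \max(1/n, n^{\delta/2-1})$.

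Combining the three lower bounds on $\epsilon_n$: condition (i) needs $\epsilon_n \gtrsim 1/\sqrt n$, condition (ii) needs $\epsilon_n \gtrsim \max(1/n, n^{\delta/2-1})$, and condition (iii) needs $\epsilon_n \gtrsim 1/\sqrt n$; the binding choice is $\epsilon_n \asymp \max(1/\sqrt n, n^{\delta/2}/n)$, which is exactly the claimed rate. The main obstacle I anticipate is the bookkeeping in the variance step — correctly propagating the $(2+\delta)$-th power through the generalized Lipschitz bound with the right combinatorial constant $m^{1+\delta}$, verifying the cross term $\sqrt{C^{(1)} D}$ via Cauchy–Schwarz, and checking that the summability hypothesis $\sum_{k\geq 1}\alpha_k^{\delta/(2+\delta)} < \infty$ is exactly what makes both mixing sums finite so the $\mathrm{O}(n^{\delta/2})$ (rather than something larger) rate emerges. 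I would also need to be slightly careful that all constants ($B$, $C_k^{(t)}$, $m$, the mixing sum, $\mathcal{K}$ constant) are $n$-independent so they can be absorbed into the $\mathrm{O}(\cdot)$, and that the $\delta$ fixed in \Cref{assume:gen-lip} is the same $\delta$ appearing in \Cref{prop:2}.
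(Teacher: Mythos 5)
Your proposal is correct and follows essentially the same route as the paper's proof: verifying conditions (i)--(iii) of \Cref{thm:pac-bayes2} via the stationary forms of \Cref{prop:1a} and \Cref{prop:2}, handling the $Z_0$ term by taking the invariant initial distributions, decoupling samples from parameters through \Cref{assume:gen-lip} with the Jensen/power-mean constant $m^{1+\delta}$, bounding the cross term $\sqrt{C^{(1)}_{\theta_0,\theta}D}$ by Cauchy--Schwarz, and using the summability of $\alpha_k^{\delta/(2+\delta)}$ to obtain $\epsilon_n \in \mathrm{O}\left(\max\left(\frac{1}{\sqrt{n}},\frac{n^{\delta/2}}{n}\right)\right)$. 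No substantive differences from the paper's argument.
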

Theorem~\ref{thm:lip-gen} is satisfied by a large class of Markov chains, including chains with countable and continuous state spaces. In particular, if the Markov chain is geometrically ergodic, then it follows from \cref{eq:alpha_mixing-coeff-sum} that $\sum_{k \geq 1} \alpha_k^{\delta/(2+\delta)} < +\infty$.
Observe that in order to achieve $O(\frac{1}{\sqrt{n}})$ convergence, we need $\delta \le 1$. 
Note also that as $\delta$ decreases, satisfying the condition $\sum_{k \geq 1} \alpha_k^{\delta/(2+\delta)}$ requires the Markov chain to be faster mixing.

We now illustrate Theorem~\ref{thm:lip-gen} for a number of Markov chain models. First, consider a birth-death Markov chain on a finite state space. 
\begin{proposition}~\label{thm:bd}
Suppose the data-generating process is a birth-death Markov chain, with one-step transition kernel parametrized by the birth probability $\theta_0 \in \Theta$. Let $\mathcal{F}$ be the set of all Beta distributions. We choose the prior to be a Beta distribution with parameters $\alpha$ and $\beta$. Then, the conditions of Theorem~\ref{thm:lip-gen} are satisfied and $\epsilon_n\in\mathrm{O}\left(\frac{1}{\sqrt{n}}\right)$.
\end{proposition}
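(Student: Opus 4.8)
The plan is to verify that the birth-death chain on a finite state space satisfies all hypotheses of \Cref{thm:lip-gen}, namely \Cref{assume:gen-lip} (including its two quantitative sub-conditions), the $\alpha$-mixing summability $\sum_{k\ge 1}\alpha_k^{\delta/(2+\delta)}<\infty$, and the prior-distance bound $\mathcal{K}(\rho_n,\pi)\le\sqrt n\,C$; then the conclusion $\epsilon_n\in\mathrm{O}(1/\sqrt n)$ follows by taking $\delta\le 1$ in the theorem. I would begin by pinning down the setup already announced in \Cref{example:bd}: the transition pmf $p_\theta(i\pm1\mid i)$ with birth probability $1-\theta$ (death probability $\theta$), restricted to a finite state space so that the chain is irreducible and aperiodic, hence geometrically ergodic; by \cref{eq:alpha_mixing-coeff-sum} this immediately gives the $\alpha$-mixing summability for every $\upsilon>0$, in particular for $\upsilon=\delta/(2+\delta)$.

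Next I would establish \cref{equation:lips_for_transition_probabilities} and \cref{equation:lips_for_stationary_distribution} with the specific functions named in \Cref{example:bd}. For the transition kernel, $\log p_{\theta_1}(x_1\mid x_0)-\log p_{\theta_2}(x_1\mid x_0)$ equals $\log(\theta_1/\theta_2)$ on the event $\{x_1=x_0-1\}$ and $\log((1-\theta_1)/(1-\theta_2))$ on $\{x_1=x_0+1\}$, so the bound holds with $M^{(1)}_1=I_{[x_1=x_0+1]}$, $M^{(1)}_2=I_{[x_1=x_0-1]}$, $M^{(1)}_3=1$ and $f^{(1)}_k$ as in the example (this is an equality, so the Lipschitz inequality is trivially satisfied). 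For the invariant distribution I would write the stationary pmf of the finite birth-death chain explicitly via its detailed-balance / product form $q_\theta(x)\propto\prod(\text{birth}/\text{death ratios})$, take logs, and read off that the log-ratio $\log q_{\theta_1}(x)-\log q_{\theta_2}(x)$ is a linear combination (with integer coefficients growing like $x$) of $\log(\theta_0/\theta)$ and $\log((1-\theta_0)/(1-\theta))$, matching the stated $M^{(2)}_k$ and $f^{(2)}_k$; since the state space is finite, $x-1$ is bounded, so $M^{(2)}_k$ are bounded functions. For the sub-conditions: condition (ii) of \Cref{assume:gen-lip} is immediate because each $M^{(1)}_k\in\{0,1\}$-valued or bounded and the state space is finite, so $\E[M^{(1)}_k(X_1,X_0)^{2+\delta}]\le 1<B$. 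Condition (i) reduces to controlling $\int|\log(\theta_0/\theta)|^{2+\delta}\rho_n(d\theta)$ and $\int|\log((1-\theta_0)/(1-\theta))|^{2+\delta}\rho_n(d\theta)$; here I take $\rho_n$ to be the $\mathrm{Beta}$ distribution with mean $\theta_0$ and variance of order $1/n$ (feasible for all large $n$ since $\theta_0\in(0,1)$), and invoke \Cref{prop:3} applied to $f(\theta,\theta_0)=\log(\theta_0/\theta)$ and $f(\theta,\theta_0)=\log((1-\theta_0)/(1-\theta))$, which have bounded derivatives on any compact subinterval of $(0,1)$ containing a neighborhood of $\theta_0$ — noting that with high $\rho_n$-probability $\theta$ stays in such an interval, and handling the vanishing tail mass near $0$ and $1$ by a truncation argument (or by choosing $\mathcal F$/$\rho_n$ supported away from the endpoints).

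Finally I would verify $\mathcal{K}(\rho_n,\pi)\le\sqrt n\,C$: the prior $\pi=\mathrm{Beta}(\alpha,\beta)$ and $\rho_n=\mathrm{Beta}(a_n,b_n)$ with $a_n,b_n\to\infty$ so that the mean is $\theta_0$ and the variance is $\Theta(1/n)$ (this forces $a_n,b_n=\Theta(n)$). The KL divergence between two Beta distributions has a closed form in terms of log-Gamma functions and digamma functions; a Stirling-type estimate shows $\mathcal{K}(\mathrm{Beta}(a_n,b_n),\mathrm{Beta}(\alpha,\beta))=\mathrm{O}(\log n)$, which is comfortably $o(\sqrt n)$, so the hypothesis holds for large $n$. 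Assembling these pieces, \Cref{thm:lip-gen} applies and yields the conditions of \Cref{thm:pac-bayes2} with $\epsilon_n\in\mathrm{O}(\max(n^{-1/2},n^{\delta/2-1}))$; choosing $\delta\le 1$ gives $\epsilon_n\in\mathrm{O}(1/\sqrt n)$. The main obstacle I anticipate is the verification of \Cref{assume:gen-lip}~(\ref{assume:gen-lip(0)}) near the boundary of the parameter space: the log-ratios $\log(\theta_0/\theta)$ and $\log((1-\theta_0)/(1-\theta))$ have unbounded derivatives as $\theta\to 0$ or $\theta\to 1$, so a naive application of \Cref{prop:3} (which assumes a bounded domain with bounded derivatives) is not valid, and the argument must either restrict $\mathcal F$ to Beta laws bounded away from the endpoints or absorb the $\rho_n$-tail mass into the constant — a routine but slightly delicate estimate; everything else (the explicit stationary distribution, the geometric ergodicity, the Beta–Beta KL bound) is standard.
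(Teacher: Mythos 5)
Your proposal is correct in outline, but it takes a genuinely different route from the paper's. The paper proves this proposition by reduction: it simply invokes the non-stationary result (\Cref{thm:bd-ns}, which verifies \Cref{thm:bound-func}) and fixes the initial distribution to the invariant one, whereas you verify the hypotheses of \Cref{thm:lip-gen} directly. Two substantive differences follow. First, the paper treats the finite-state chain as having a \emph{uniform} invariant distribution, so the log-ratio of invariant densities vanishes and all $f^{(2)}_k\equiv 0$; you instead use the detailed-balance product form, getting $M^{(2)}_k(x)$ growing linearly in $x$ but bounded on the finite state space — this is the more robust (and arguably more accurate) treatment, since the reflected walk's stationary law is a truncated geometric rather than uniform in general, but it costs you an extra verification step the paper skips. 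Second, for the moment condition in \Cref{assume:gen-lip}(\ref{assume:gen-lip(0)}) the paper does not go through \Cref{prop:3} at all: it fixes $\delta=1$, takes $\rho_n=\mathrm{Beta}(n\theta_0,n(1-\theta_0))$, and bounds $\int|\log(\theta_0/\theta)|^3\rho_n(d\theta)$ by an explicit Beta-integral computation (reducing to ratios of Beta functions that are $O(1/n)$), which sidesteps the boundary issue you correctly flag — the mean-value-theorem route fails near $\theta\in\{0,1\}$ where the derivatives of $\log(\theta_0/\theta)$ and $\log((1-\theta_0)/(1-\theta))$ blow up, so your truncation/tail-mass patch is genuinely needed (and does work, since the $\mathrm{Beta}(n\theta_0,n(1-\theta_0))$ tails decay exponentially against a polylogarithmic integrand), but the paper's direct computation is the cleaner path here. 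Your KL step matches the paper's in substance: the appendix lemmas (\Cref{prop:kl-prior-unif}, \Cref{lem:kl-prior}) give $\mathcal{K}(\rho_n,\pi)\le C+\tfrac12\log n=o(\sqrt n)$ via exactly the Stirling/digamma estimates you describe. Net: your approach buys a self-contained, stationary-case verification that does not rely on the uniform-invariant simplification; the paper's buys brevity and avoids the delicate boundary estimate.
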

\begin{proof}
The proof of \Cref{thm:bd} follows from the more general \Cref{thm:bd-ns}, by fixing the initial distribution to the invariant distribution under $\theta_0$.
\end{proof}
%
%\Cref{thm:bd} demonstrates that the concentration inequality holds when our observations are coming from a birth-death chain on a finite state space. 
The birth death chain on the finite state space is, of course, geometrically ergodic and the alpha mixing coefficients $\alpha_k$ decay geometrically. Note that the invariant distribution of this Markov chain is uniform over the state space, and consequently this is a particularly simple example. A more complicated and more realistic example is a birth-death Markov chain on the positive integers. We note that if the probability of birth in a birth-death Markov chain on positive integers is greater than $0.5$, then the Markov chain is transient, and consequently, not ergodic. As seen in \Cref{example:bd}, we denoted the probability of birth as $\theta$. Hence, our prior should be chosen in such a fashion that the support should be within $(0,0.5)$. For that purpose, we define the class of scaled beta distribution.
\begin{defn}[{Scaled Beta}]~\label{def:scaled-beta}
If $X$ is a beta distribution on with parameters $\alpha$ and $\beta$, then $Y$ is said to be a scaled beta distribution with same parameters on the interval $(c,m+c)$ if,
\begin{align*}
    Y & = mx+c\enspace ; \enspace (m,c)\in \mathbb{R}^2
\end{align*}
and in that case, via transformation of variables, the pdf of $Y$ is obtained as,
\begin{align*}
    f(y) = \begin{cases}
    \frac{m}{\mathrm{Beta}(\alpha,\beta)}\left(\frac{y-c}{m}\right)^{\alpha-1}\left(1-\frac{y-c}{m}\right)^{\beta-1} &  \mathrm{if} y\in (c,m+c)\\
    \quad 0 & \mathrm{otherwise}.
    \end{cases}
\end{align*}
\end{defn}
It follows that under such circumstances, $\E[Y]=m\frac{\alpha}{\alpha+\beta}$ and $\Var[Y]=m^2\frac{\alpha\beta}{(\alpha+\beta)^2(\alpha+\beta+1)}$.
The scaled beta distribution is a popular distribution which finds use in a variety of practical applications. One example of such scaled beta distribution might be when $m=0.5$ and $c=0$. Then it becomes a beta distribution rescaled to have support on $(0,\frac{1}{2})$. Another example is when $m=2$ and when $c=-1$. Then it becomes a beta distribution rescaled to have support on $(-1,1)$.
\begin{proposition}~\label{thm:bd2}
Suppose the data-generating process is a positive recurrent birth-death Markov chain on the positive integers parameterized by the birth probability $\theta_0\in(0,\frac{1}{2})$.  Further let $\mathcal{F}$ be the set of all Beta distributions rescaled to have support $(0,\frac{1}{2})$. We choose the prior to be a scaled Beta distribution on $(0,1/2)$ with parameters $\alpha$ and $\beta$. Then, the conditions of Theorem~\ref{thm:lip-gen} are satisfied with $\epsilon_n\in\mathrm{O}\left(\frac{1}{\sqrt{n}}\right)$.
\end{proposition}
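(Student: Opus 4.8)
The plan is to verify the three conditions of \Cref{thm:lip-gen} for the specific birth-death chain on the positive integers, namely: (a) \Cref{assume:gen-lip} holds with the functions identified in \Cref{example:bd}; (b) the $\alpha$-mixing coefficients satisfy $\sum_{k\ge 1}\alpha_k^{\delta/(2+\delta)}<\infty$; and (c) $\mathcal{K}(\rho_n,\pi)\le \sqrt{n}C$ for a suitable choice of $\rho_n$ in the variational family $\mathcal{F}$ of rescaled Beta distributions. Once these are in place, \Cref{thm:lip-gen} immediately gives $\epsilon_n\in\mathrm{O}(\max(1/\sqrt{n},n^{\delta/2}/n))$, and choosing $\delta\le 1$ (in fact any small $\delta>0$) yields the claimed $\epsilon_n\in\mathrm{O}(1/\sqrt{n})$.

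First I would establish the Lipschitz-type decomposition. Writing $\log p_\theta(j|i)$, note that for a transition $i\to i+1$ we get $\log(1-\theta)$ and for $i\to i-1$ we get $\log\theta$, so $|\log p_{\theta_1}(x_1|x_0)-\log p_{\theta_2}(x_1|x_0)|$ equals $|\log(\theta_1/\theta_2)|$ on the event $\{x_1=x_0-1\}$ and $|\log((1-\theta_1)/(1-\theta_2))|$ on $\{x_1=x_0+1\}$; this is exactly \cref{equation:lips_for_transition_probabilities} with $M_1^{(1)}=I_{[X_1=X_0+1]}$, $M_2^{(1)}=I_{[X_1=X_0-1]}$, $M_3^{(1)}=1$ and $f_1^{(1)},f_2^{(1)},f_3^{(1)}$ as in \Cref{example:bd}. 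For \cref{equation:lips_for_stationary_distribution} I would use the known stationary distribution of a positive recurrent birth-death chain: with birth probability $\theta<1/2$ and death probability $1-\theta$, the chain on $\{1,2,\dots\}$ (reflected appropriately at the boundary) has a geometric-type invariant law $q_\theta(x)\propto (\theta/(1-\theta))^{x}$ (up to boundary normalization), so $\log q_{\theta_1}(x)-\log q_{\theta_2}(x)$ is an affine function of $x$ with coefficients that are differences of $\log(\theta/(1-\theta))$ and $\log$ of the normalizing constants; bounding these by $M_1^{(2)}=1$, $M_2^{(2)}=M_3^{(2)}=X_0-1$ against $f_k^{(2)}$ as in \Cref{example:bd} gives the bound. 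For condition (\ref{assume:gen-lip(0)}) of \Cref{assume:gen-lip}, I would take $\rho_n$ to be a rescaled Beta on $(0,1/2)$ with mean $\theta_0$ and variance $\sigma^2/n$ (achievable by letting the Beta shape parameters grow linearly in $n$), and apply \Cref{prop:3} to each $f_k^{(t)}(\theta,\theta_0)$: each is a smooth function of $\theta$ on the compact set $[\,\theta_0-\eta,\theta_0+\eta\,]\subset(0,1/2)$ vanishing at $\theta=\theta_0$, with bounded derivative away from the endpoints, so $\int |f_k^{(t)}|^{2+\delta}\rho_n(d\theta)<C_k^{(t)}/n$. The moment bound in (ii) is immediate since $M_1^{(1)},M_2^{(1)},M_3^{(1)}\le 1$, while $\E[(X_0-1)^{2+\delta}]<\infty$ under the geometric-type invariant law handles the $M_k^{(2)}$ moments.

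Next I would address the mixing condition. A positive recurrent birth-death chain on the positive integers with birth probability bounded below $1/2$ satisfies a geometric drift condition (take a Lyapunov function like $V(x)=c^{x}$ for suitable $c>1$, or $V(x)=x$), hence is geometrically ergodic by \Cref{theorem:v-geom}; by \cref{eq:alpha_mixing-coeff-sum} (equivalently \Cref{theorem:jones}) the $\alpha$-mixing coefficients decay geometrically, so $\sum_{k\ge 1}\alpha_k^{\delta/(2+\delta)}<\infty$ for every $\delta>0$. Finally, for the KL condition, since both $\rho_n$ and the prior $\pi$ are rescaled Beta distributions on $(0,1/2)$, the divergence $\mathcal{K}(\rho_n,\pi)$ is a closed-form expression in the Beta shape parameters (involving log-Gamma terms and digamma functions); as the $\rho_n$ shape parameters grow like $n$ while $\pi$'s are fixed, a direct estimate shows $\mathcal{K}(\rho_n,\pi)=\mathrm{O}(\log n)$, which is certainly $\le \sqrt{n}C$ eventually. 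Combining (a), (b), (c), \Cref{thm:lip-gen} applies and gives the result. I expect the main obstacle to be the second step: carefully writing the invariant distribution of the birth-death chain on the positive integers (in particular handling the boundary behavior at state $1$ so the normalizing constant is well-defined) and confirming that $\log q_{\theta_1}/q_{\theta_2}$ really does decompose as an affine function of the state with the coefficients claimed in \Cref{example:bd}; the remaining estimates are routine applications of \Cref{prop:3}, standard drift arguments, and explicit Beta-divergence formulas.
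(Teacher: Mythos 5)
Your overall route is the one the paper takes (the paper's stated proof is a one-liner deferring to \Cref{thm:bd2-ns}, whose proof verifies exactly the items you list: the decomposition of \Cref{example:bd}, boundedness of the $M^{(1)}_k$, second moments of $M^{(2)}_k$ under the geometric-type invariant law, geometric ergodicity for the mixing sum, and the scaled-Beta KL bound of order $\log n$ via \Cref{lem:kl-prior}). However, there is one genuine gap in your argument: you verify \Cref{assume:gen-lip}(\ref{assume:gen-lip(0)}) by invoking \Cref{prop:3}, but that proposition requires the partial derivative of $f$ to be bounded on the domain carrying the mass of $\rho_n$. For $f^{(1)}_1(\theta,\theta_0)=\log(\theta_0/\theta)$ (and likewise $f^{(2)}_2$) the derivative is $-1/\theta$, which is unbounded near $\theta=0$, and $\theta=0$ is in the closure of the support of your rescaled Beta $\rho_n$ on $(0,\tfrac12)$. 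Restricting attention to a compact interval $[\theta_0-\eta,\theta_0+\eta]$ does not fix this, because $\rho_n$ charges all of $(0,\tfrac12)$; you would have to separately control the contribution $\int_{(0,\theta_0-\eta)}|\log(\theta_0/\theta)|^{2+\delta}\rho_n(d\theta)$, and nothing in your outline does so.

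The paper avoids \Cref{prop:3} here precisely for this reason: in the proof of \Cref{thm:bd2-ns} it takes $\delta=1$ and bounds $|\log(\theta_0/\theta)|^3$ by an explicit rational function of $\theta$, then absorbs the resulting $\theta^{-3}$ factor into the Beta density (shifting the shape parameter $\alpha_n$ to $\alpha_n-3$), so that the integral becomes a ratio of Beta functions times a second-moment factor; since $\alpha_n\propto n$, the Beta-function ratio is $\mathrm{O}(1)$ and the remaining factor is $\mathrm{O}(1/n)$, giving $\int|f^{(1)}_1|^{3}\rho_n(d\theta)\le C/n$ despite the singularity of the log at $0$. Your argument can be repaired by substituting this direct Beta-integral estimate (or any tail bound exploiting that the Beta$(\alpha_n,\beta_n)$ density with $\alpha_n\sim n$ puts only exponentially small mass near $0$) for the appeal to \Cref{prop:3}; the rest of your outline — the transition-kernel decomposition, the invariant-law computation with its boundary normalization, the drift/geometric-ergodicity argument for $\sum_k\alpha_k^{\delta/(2+\delta)}<\infty$, and the $\mathcal{K}(\rho_n,\pi)=\mathrm{O}(\log n)\le C\sqrt n$ bound — matches the paper's proof and is sound.
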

\begin{proof}
The proof of \Cref{thm:bd2} follows from that of \Cref{thm:bd2-ns} by fixing the initial distribution to the invariant distribution under $\theta_0$.
\end{proof}
Note that if the transition probability of jumping from state $i$ to state $i+1$ (i.e., the `birth' probability) is greater than $\frac{1}{2}$, then the birth-death chain is transient. Therefore, we restrict to only those cases when the probability of birth is less than $\frac{1}{2}$. 
Unlike with the finite state-space, %The proof of~\Cref{thm:bd2} is much harder, as 
the invariant distribution now depends on the parameter $\theta \in \Theta$, and verification of the conditions of the proposition is more involved. 

Both Proposition~\ref{thm:bd} and Proposition~\ref{thm:bd2} assume a discrete state space. The next example considers a strictly stationary {simple linear model} (as defined in~\Cref{example:slm}), which has a continuous, unbounded state space.
\begin{proposition}~\label{example:slm-stationary}
Suppose the data-generating model is a strictly stationary {simple linear model} satisfying the equation
\begin{align}
    X_n & = \theta_0 X_{n-1}+W_n,
\end{align}
where $\{W_n\}$ are i.i.d. standard Gaussian random variables and $|\theta_0| < 1$. Let $\mathcal F$ be the space of all parameterized distributions with support $(-1,1)$ and suppose that $\mathcal{F}$ is the class of all beta distributions rescaled to have the support $(-1,1)$. Then, the conditions of \Cref{thm:lip-gen} are satisfied with $\epsilon_n \in \mathrm{O}\left(\frac{1}{\sqrt{n}}\right)$.
\end{proposition}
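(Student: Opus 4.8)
The plan is to verify the three conditions of \Cref{thm:lip-gen} by identifying the functions $M^{(1)}_k, M^{(2)}_k, f^{(1)}_k, f^{(2)}_k$ of \Cref{assume:gen-lip} for the simple linear model (as already announced in \Cref{example:slm}), and then checking that the associated constants are finite and that an appropriate variational family $\rho_n$ exists. First I would write out the one-step transition density $p_\theta(x_n|x_{n-1}) = \frac{1}{\sqrt{2\pi}}\exp\left(-\tfrac12 (x_n - \theta x_{n-1})^2\right)$ and compute the log-ratio $\log p_{\theta_1}(x_1|x_0) - \log p_{\theta_2}(x_1|x_0)$, which expands into a term linear in $x_1 x_0$ (with coefficient $\theta_1 - \theta_2$) and a term in $x_1^2$ (with coefficient $\theta_2^2 - \theta_1^2$ after collecting). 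This matches the stated $M^{(1)}_1(x_1,x_0) = |x_1 x_0|$, $M^{(1)}_2(x_1,x_0) = x_1^2$ with $f^{(1)}_1(\theta,\theta_0) = \theta - \theta_0$ and $f^{(1)}_2(\theta,\theta_0) = \theta_0^2 - \theta^2$. For the invariant law, since $|\theta_0| < 1$ the stationary distribution is Gaussian with mean $0$ and variance $\frac{1}{1-\theta_0^2}$, so $\log q_{\theta_1}(x) - \log q_{\theta_2}(x)$ is a multiple of $x^2$ plus a constant; this gives $M^{(2)}_1(x) = x^2/2$, $M^{(2)}_2 \equiv 0$, and the corresponding $f^{(2)}_k$.

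Next I would verify \Cref{assume:gen-lip}~(\ref{assume:gen-lip(0)}) and the moment bound on the $M^{(1)}_k$. For the former, choosing $\rho_n$ to be a beta distribution rescaled to $(-1,1)$ with mean $\theta_0$ and variance of order $1/n$ (feasible since $|\theta_0| < 1$ lies in the open interval, and the scaled-beta family from \Cref{def:scaled-beta} permits arbitrarily small variance by taking $\alpha,\beta$ large in the right ratio), I would apply \Cref{prop:3}: the functions $f^{(1)}_1,f^{(1)}_2,f^{(2)}_1$ are polynomials in $\theta$ on the bounded domain $(-1,1)$ vanishing at $\theta_0$, hence have bounded derivatives, so $\int |f^{(t)}_k(\theta,\theta_0)|^{2+\delta}\rho_n(d\theta) < C_k^{(t)}/n$. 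For the moment bound, I need $\E[M^{(1)}_k(X_1,X_0)^{2+\delta}] < B$ under the stationary law; since $(X_0,X_1)$ is jointly Gaussian, $\E[|X_1 X_0|^{2+\delta}]$ and $\E[X_1^{2(2+\delta)}]$ are finite Gaussian moments, bounded uniformly because the stationary variance $\frac{1}{1-\theta_0^2}$ is a fixed finite constant (and one only needs the bound at $\theta_0$ here, or uniformly over a compact sub-interval if desired). The $\alpha$-mixing summability $\sum_k \alpha_k^{\delta/(2+\delta)} < \infty$ follows because a stationary Gaussian AR(1) with $|\theta_0|<1$ is geometrically ergodic (verify the drift condition with $V(x) = x^2$, or cite \cref{eq:alpha_mixing-coeff-sum}), so the mixing coefficients decay geometrically. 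Finally, the KL condition $\mathcal{K}(\rho_n,\pi) \le \sqrt n C$ holds because both $\rho_n$ and the prior are scaled-beta densities on a fixed bounded interval, and for the scaled-beta family with bounded parameters the KL divergence is bounded by an absolute constant (in fact $O(\log n)$ even if the parameters grow polynomially), which is $o(\sqrt n)$.

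With all of \Cref{assume:gen-lip} and the extra hypotheses of \Cref{thm:lip-gen} verified, the conclusion $\epsilon_n \in \mathrm{O}\big(\max(n^{-1/2}, n^{\delta/2 - 1})\big)$ is immediate, and choosing $\delta \le 1$ (which is compatible with the mixing summability since geometric decay makes $\sum_k \alpha_k^{\delta/(2+\delta)}$ finite for every $\delta > 0$) yields $\epsilon_n \in \mathrm{O}(1/\sqrt n)$. The main obstacle I anticipate is not any single estimate but the bookkeeping in the log-ratio computation: one must be careful that the decomposition into $M^{(1)}_k |f^{(1)}_k|$ genuinely respects the triangle-inequality form of \cref{equation:lips_for_transition_probabilities} (the cross term $-2\theta x_1 x_0$ and the quadratic term $\theta^2 x_0^2$ must be regrouped so the $\theta$-dependence factors cleanly through $f^{(1)}_1$ and $f^{(1)}_2$), and that the moment bound constant $B$ is taken with respect to the correct stationary joint density rather than an arbitrary initial law. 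A secondary subtlety is ensuring the chosen $\rho_n$ simultaneously satisfies the moment-rate condition and keeps $\mathcal{K}(\rho_n,\pi)$ controlled; since both are scaled-beta on $(-1,1)$ this reduces to a direct, if slightly tedious, estimate of beta-function ratios.
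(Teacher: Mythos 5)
Your proposal is correct and follows essentially the same route as the paper: the paper's own proof of this proposition simply defers verification of \Cref{assume:gen-lip} to the proof of \Cref{ex:slm} (same $M^{(t)}_k$ and $f^{(t)}_k$, a rescaled-beta $\rho_n$ with mean $\theta_0$ and variance $O(1/n)$ handled via \Cref{prop:3}, and the KL condition via the beta-KL lemmas), and obtains summability of the mixing coefficients from geometric ergodicity of the AR(1) with $|\theta_0|<1$, exactly as you do. The only difference is cosmetic: you carry out the verification directly in the stationary setting rather than by reference to the non-stationary proposition.
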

\begin{proof}
The proof that simple linear model satisfies \Cref{assume:gen-lip} is deferred to the proof of \Cref{ex:slm}. The simple linear model with $|\theta_0| < 1$ has geometrically decreasing (and therefore summable) $\alpha$-mixing coefficients as a consequence of~\cite[eq. (15.49)]{meyntweedie} and~\Cref{theorem:jones}. Combining these two facts, it follows that the conditions of \Cref{thm:lip-gen} are satisfied. 
%That it is geometrically ergodic is a consequence of Equation (15.49), page 384 \cite{meyntweedie}. Therefore it follows from~\Cref{theorem:jones} that the $\alpha$-mixing coefficients are geometrically decreasing, implying the sequence is summable. 
\end{proof}
Observe that Theorem~\ref{thm:pac-bayes} (and Corollary~\ref{thm:pac-bayes2}) are general, and hold for {\it any} dependent data-generating process. Therefore, there can be Markov chains that satisfy these, but do not satisfy Assumption~\ref{assume:gen-lip} which entails some loss of generality. However, as our examples demonstrate, common Markov chain models do indeed satisfy the latter assumption.

\section{Non-Stationary, Ergodic Markov Data-Generating Models}~\label{sec:nsedgm}
We call a time-homogeneous Markov chain \textit{non-stationary} if the initial distribution $q^{(0)}$ is not the invariant distribution. 
There are two sets of results in this setting: in~\Cref{thm:bound-func} and~\Cref{thm:gen-nonstationary-hajek} we explicitly impose the $\alpha$-mixing condition, while in~\Cref{thm:lip-gen-ns} we impose a $V$-geometric ergodicity condition (\Cref{define:v-ergodicity}). As seen in \cref{eq:alpha_mixing-coeff-sum} if the Markov chain is also geometrically ergodic, then $\forall \enspace \delta>0$, $\sum \alpha_k^{\delta/(2+\delta)}<\infty$. This condition can be relaxed, albeit at the risk of more complicated calculations that, nonetheless, mirror those in the geometrically ergodic setting. A common thread through these results is that we must impose some integrability or regularity conditions on the functions $M^{(1)}_k$.

First, in~\Cref{thm:bound-func} we assume that the $M^{(1)}_k$ functions in~\Cref{assume:gen-lip} are uniformly bounded and that the $\alpha$-mixing condition is satisfied. This result holds for both discrete and continuous state space settings. 

\begin{theorem}\label{thm:bound-func}
  Let $\{\data\}$ be generated by an $\alpha$-mixing Markov chain parametrized by $\theta_0 \in \Theta$ with transition probabilities satisfying \Cref{assume:gen-lip} and with known initial distribution $q^{(0)}$. Let $\{\alpha_k\}$ be the $\alpha$-mixing coefficients under $\theta_0$, and assume that $\sum_{k\geq 1} \alpha_k^{{\delta}/{(2+\delta)}} < +\infty$. Suppose that there exists $B \in \mathbb R$ such that $\sup_{x,y}|M^{(1)}_k(x,y)| < B$ for all $k \in \{1,2,\dots,m\}$ in~\Cref{assume:gen-lip}. Furthermore, assume that there exists $\rho_n \in \mathcal{F}$ such that $\mathcal{K}(\rho_n,\pi)\leq \sqrt{n}C$ for some constant $C>0$. If the initial distribution $q^{(0)}$ satisfies $E_{q^{(0)}}|M^{(2)}_k(X_0)|^2 < +\infty$ for all $k \in \{1,2,\dots,m\}$, then the conditions of \Cref{thm:pac-bayes2} are satisfied with $\epsilon_n=\mathrm{O}\left(\max(\frac{1}{\sqrt{n}},\frac{n^{\delta/2}}{n})\right)$.
\end{theorem}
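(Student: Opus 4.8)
The plan is to check the three hypotheses of \Cref{thm:pac-bayes2} for the measure $\rho_n$ supplied by the assumptions, with $\epsilon_n=\mathrm{O}(\max(\tfrac{1}{\sqrt n},\tfrac{n^{\delta/2}}{n}))$. The Kullback--Leibler condition is immediate: $\mathcal{K}(\rho_n,\pi)\leq\sqrt n\,C=n\cdot\tfrac{C}{\sqrt n}\leq n\epsilon_n$ whenever $\epsilon_n\geq C/\sqrt n$. The substance is in the mean and variance conditions on $r_n(\theta,\theta_0)$, and the role of the hypothesis $\sup_{x,y}|M^{(1)}_k(x,y)|<B$ is to make these go through with no control over the non-stationary marginals $q^{(i)}_{\theta_0}$ --- this is exactly what substitutes for the ergodicity assumption of \Cref{thm:lip-gen-ns}.

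For the mean, I would apply \Cref{prop:1a} to write $\E[r_n(\theta,\theta_0)]=\sum_{i=1}^n\E[\log(p_{\theta_0}(X_i|X_{i-1})/p_{\theta}(X_i|X_{i-1}))]+\E[Z_0]$. By \cref{equation:lips_for_transition_probabilities} and $\sup_{x,y}|M^{(1)}_k(x,y)|<B$, each summand is at most $B\sum_{k=1}^m|f^{(1)}_k(\theta,\theta_0)|$ in absolute value --- a bound involving none of the marginals --- so integrating against $\rho_n$ and using \cref{assume:gen-lip(2)} gives a contribution of order $nBm\cdot\tfrac{C}{\sqrt n}=\mathrm{O}(\sqrt n)$. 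The boundary term $\int\E[Z_0]\,\rho_n(d\theta)$ is $\mathrm{O}(1)$ by \cref{equation:lips_for_stationary_distribution} together with the integrability of $M^{(2)}_k(X_0)$ under $q^{(0)}$ (and vanishes if the model uses the known initial distribution $q^{(0)}$). Hence the mean condition holds with $\epsilon_n=\mathrm{O}(1/\sqrt n)$.

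For the variance I would invoke \Cref{prop:2} with the data-generating parameter $\theta_0$ playing the role of $\theta_1$. The key step is a bound on $C^{(i)}_{\theta_0,\theta}$ that is uniform in $i$: from \cref{equation:lips_for_transition_probabilities}, $\sup_{x,y}|M^{(1)}_k(x,y)|<B$, and a power-mean inequality, $C^{(i)}_{\theta_0,\theta}\leq m^{1+\delta}B^{2+\delta}\sum_{k=1}^m|f^{(1)}_k(\theta,\theta_0)|^{2+\delta}$ for every $i$, so by \Cref{assume:gen-lip}~(\ref{assume:gen-lip(0)}) we get $\int C^{(i)}_{\theta_0,\theta}\,\rho_n(d\theta)\leq\kappa/n$ uniformly in $i$ for some constant $\kappa$, and by Cauchy--Schwarz the same bound holds for the cross terms $\int\sqrt{C^{(i)}_{\theta_0,\theta}C^{(j)}_{\theta_0,\theta}}\,\rho_n(d\theta)$. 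Substituting into \Cref{prop:2} and using $\sum_{k\geq1}\alpha_k^{\delta/(2+\delta)}=:S<\infty$ to collapse the lag sums ($\sum_{i,j=1}^n\alpha_{|i-j|-1}^{\delta/(2+\delta)}=\mathrm{O}(nS)$ and $\sum_{i=1}^n\alpha_{i-1}^{\delta/(2+\delta)}=\mathrm{O}(S)$) yields $\int\Var(r_n(\theta,\theta_0))\,\rho_n(d\theta)<\mathrm{O}(n)(\tfrac4n+\mathrm{O}(\tfrac{n^{\delta/2}}{n}))+\mathrm{O}(1)+(\text{boundary})=\mathrm{O}(n^{\delta/2})$, where the boundary contributions $D_{1,2}$ and $\Cov(Z_0,Z_0)$ are bounded by a constant independent of $n$ using the moment condition on $M^{(2)}_k(X_0)$ (and vanish if the model uses $q^{(0)}$). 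So the variance condition holds with $\epsilon_n=\mathrm{O}(n^{\delta/2}/n)$, and choosing $\epsilon_n=\mathrm{O}(\max(\tfrac1{\sqrt n},\tfrac{n^{\delta/2}}{n}))$ verifies all three hypotheses of \Cref{thm:pac-bayes2}.

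The main obstacle is the variance bound, and within it the uniform-in-$i$ control of $C^{(i)}_{\theta_0,\theta}$: in the non-stationary regime the marginals $q^{(i)}_{\theta_0}$ change at every step and, lacking an ergodicity assumption, cannot be treated one at a time, so the argument genuinely relies on $\sup_{x,y}|M^{(1)}_k(x,y)|<B$ to remove them. The remainder is routine bookkeeping: verifying that the $n^{\delta/2}$ prefactor in \Cref{prop:2}, multiplied by the $\mathrm{O}(1/n)$ bound on $\int C^{(i)}_{\theta_0,\theta}\rho_n(d\theta)$ and summed against the summable mixing coefficients over the $\mathrm{O}(n)$ near-diagonal pairs $(i,j)$, still produces only $\mathrm{O}(n^{\delta/2})$.
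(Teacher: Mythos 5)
Your proposal is correct and follows essentially the same route as the paper's proof: verify the three hypotheses of \Cref{thm:pac-bayes2} by combining \Cref{prop:1a} for the mean and \Cref{prop:2} for the variance, using the uniform bound $\sup_{x,y}|M^{(1)}_k(x,y)|<B$ together with Jensen's inequality to control $C^{(i)}_{\theta_0,\theta}$ uniformly in $i$, Cauchy--Schwarz for the cross terms, the summability of $\alpha_k^{\delta/(2+\delta)}$ to collapse the lag sums, and the assumed bound $\mathcal{K}(\rho_n,\pi)\leq C\sqrt{n}$. The only cosmetic difference is that the paper simply sets both initial laws equal to the known $q^{(0)}$ so that $Z_0\equiv 0$ and the boundary terms vanish identically, whereas you additionally sketch how to bound them when they do not vanish, which you also note as optional.
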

The following result in~\Cref{thm:bd-ns} illustrates Theorem~\ref{thm:bound-func} in the setting of a finite state birth-death Markov chain.
\begin{proposition}~\label{thm:bd-ns}
Suppose the data-generating process is a finite state birth-death Markov chain, with one-step transition kernel parametrized by the birth probability $\theta_0$. Let $\mathcal{F}$ be the set of all Beta distributions. We choose the prior to be a Beta distribution with parameters $\alpha$ and $\beta$. Then, the conditions of \Cref{thm:bound-func} are satisfied with $\epsilon_n=\mathrm{O}\left(\frac{1}{\sqrt{n}}\right)$ for any initial distribution $q^{(0)}$.
\end{proposition}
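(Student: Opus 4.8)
The plan is to check, one at a time, the hypotheses of \Cref{thm:bound-func} for the finite-state birth--death chain, with essentially all of the real work lying in the choice of the variational sequence $\rho_n$ and in the bound on $\mathcal{K}(\rho_n,\pi)$. I would begin by disposing of the mixing requirement: the finite-state birth--death chain is irreducible and, with the boundary conventions of \Cref{example:bd}, aperiodic, hence geometrically ergodic ($V\equiv1$); by \Cref{theorem:jones} its $\alpha$-mixing coefficients then decay geometrically, so $\sum_{k\ge1}\alpha_k^{\delta/(2+\delta)}<\infty$ for every $\delta>0$. I would fix $\delta=1$ at this point, which is what ultimately produces the $\mathrm{O}(1/\sqrt n)$ rate, since then $n^{\delta/2}/n=1/\sqrt n$.

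Next I would verify \Cref{assume:gen-lip}. Expanding $\log p_{\theta_1}(x_1\mid x_0)-\log p_{\theta_2}(x_1\mid x_0)$, the only values that can occur are $\log(\theta_1/\theta_2)$ on a death step and $\log((1-\theta_1)/(1-\theta_2))$ on a birth step, with the two boundary states contributing only finitely many, structurally identical, terms; this is exactly the decomposition of \Cref{example:bd}. The resulting $M^{(1)}_k$ are indicators or the constant $1$, hence bounded by $B=1$, which simultaneously supplies \Cref{assume:gen-lip}(ii) and the uniform-boundedness hypothesis of \Cref{thm:bound-func}; the $M^{(2)}_k$ are affine in the state, hence bounded on the finite state space, so $\E_{q^{(0)}}|M^{(2)}_k(X_0)|^2<\infty$ for an arbitrary initial law $q^{(0)}$. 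For \Cref{assume:gen-lip}(i) I would take $\rho_n=\mathrm{Beta}(a_n,b_n)$ with $\E_{\rho_n}[\theta]=\theta_0$ and $\Var_{\rho_n}[\theta]=\sigma^2/n$, so that $a_n\asymp\theta_0 n$ and $b_n\asymp(1-\theta_0)n$. Since the $f^{(t)}_k$ are the smooth log-ratios $\log(\theta_0/\theta)$ and $\log((1-\theta_0)/(1-\theta))$, with derivatives bounded on any neighbourhood of $\theta_0$ compactly contained in $(0,1)$, \Cref{prop:3} yields $\int|f^{(t)}_k(\theta,\theta_0)|^{2+\delta}\rho_n(d\theta)<C_k^{(t)}/n$ once one checks that the $\mathrm{Beta}(a_n,b_n)$ mass escaping such a neighbourhood is exponentially small in $n$ and therefore cannot spoil the $\mathrm{O}(1/n)$ bound.

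The one substantive step is the bound $\mathcal{K}(\rho_n,\pi)\le\sqrt n\,C$. Here I would substitute the closed form for the Kullback--Leibler divergence between two Beta laws and use the expansions $\psi(x)=\log x+\mathrm{O}(1/x)$ and $\log\Gamma(x)=x\log x-x+\mathrm{O}(\log x)$ as $a_n,b_n\to\infty$. The leading terms of order $n\log n$ coming from $-\log B(a_n,b_n)$ and from the digamma contributions $(a_n-\alpha)\psi(a_n)+(b_n-\beta)\psi(b_n)-(a_n+b_n-\alpha-\beta)\psi(a_n+b_n)$ cancel, leaving $\mathcal{K}(\rho_n,\pi)=\tfrac12\log n+\mathrm{O}(1)$, which is comfortably below $\sqrt n\,C$ for all large $n$. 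Making those expansions rigorous and uniform in $n$ so that this cancellation is genuine is the step I expect to be the main obstacle; the boundary tail estimate for $\rho_n$ mentioned above is a secondary technical point.

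Finally, with every hypothesis of \Cref{thm:bound-func} verified, the conclusion is immediate: the conditions of \Cref{thm:pac-bayes2} hold with $\epsilon_n=\mathrm{O}(\max(1/\sqrt n,\,n^{\delta/2}/n))$, and the choice $\delta=1$ fixed at the outset reduces this to $\epsilon_n=\mathrm{O}(1/\sqrt n)$, as claimed.
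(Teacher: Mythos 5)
Your overall strategy coincides with the paper's: the same variational sequence $\rho_n=\mathrm{Beta}(n\theta_0,\,n(1-\theta_0))$ (mean $\theta_0$, variance of order $1/n$), the same choice $\delta=1$ to turn $n^{\delta/2}/n$ into $1/\sqrt n$, geometric ergodicity plus \Cref{theorem:jones} for summability of the mixing coefficients, bounded indicator functions $M^{(1)}_k$ supplying both \Cref{assume:gen-lip}(ii) and the uniform-boundedness hypothesis of \Cref{thm:bound-func}, triviality of the $M^{(2)}_k$ moment condition on a finite state space (the paper goes slightly further and notes the invariant distribution is uniform, so it simply sets $f^{(2)}_k\equiv 0$), and the bound $\mathcal{K}(\rho_n,\pi)=\tfrac12\log n+\mathrm{O}(1)\le C\sqrt n$ via digamma/Stirling asymptotics, which the paper packages as \Cref{prop:kl-prior-unif} and \Cref{lem:kl-prior}. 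The one step where you genuinely diverge is the verification of \Cref{assume:gen-lip}(i): you want to invoke \Cref{prop:3} after localizing to a neighbourhood of $\theta_0$, whereas the paper bounds $|\log(\theta_0/\theta)|^3$ by $|\theta_0/\theta-1|^3$ and then $|\theta_0-\theta|^3\le 2(\theta_0-\theta)^2$, reducing the integral to an explicit ratio of Beta functions that is manifestly $\mathrm{O}(1/n)$. Your route can be made to work, but the justification you sketch is not sufficient as stated: \Cref{prop:3} requires bounded partial derivatives, and outside your neighbourhood the integrands $|\log(\theta_0/\theta)|^3$ and $|\log((1-\theta_0)/(1-\theta))|^3$ are unbounded near $\theta=0$ and $\theta=1$, so ``the escaping mass is exponentially small'' does not by itself control the tail integrals; you must additionally use that the integrand grows only logarithmically at the endpoints while the $\mathrm{Beta}(n\theta_0,n(1-\theta_0))$ density vanishes there like $\theta^{n\theta_0-1}(1-\theta)^{n(1-\theta_0)-1}$ (or apply a Cauchy--Schwarz/H\"older step with a finite higher logarithmic moment), after which the tail contribution is indeed negligible. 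The paper's closed-form Beta-moment computation avoids this bookkeeping entirely; your version is more reusable for general smooth $f^{(t)}_k$ vanishing at $\theta_0$, at the price of that extra tail estimate.
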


 Theorem~\ref{thm:bound-func} also applies to data generated by Markov chains with countably infinite state spaces, so long as the class of data-generating Markov chains is strongly ergodic and the initial distribution has finite second moments. The following example demonstrates this in the setting of a birth-death Markov chain on the positive integers, where the initial distribution is assumed to have finite second moments.
 
\begin{proposition}~\label{thm:bd2-ns}
Suppose the data-generating process is a birth-death Markov chain on the non-negative integers, parameterized by the probability of birth $\theta_0\in(0,\frac{1}{2})$. Further let $\mathcal{F}$ be the set of all Beta distributions rescaled upon the support $(0,\frac{1}{2})$. Let $q^{(0)}$ be a probability mass function on non-negative integers such that $\sum_{i=1}^{\infty} i^2 q^{(0)}(i) < +\infty$. We choose the prior to be a scaled Beta distribution on $(0,1/2)$ with parameters $\alpha$ and $\beta$. Then, the conditions of \Cref{thm:bound-func} are satisfied with $\epsilon_n=\mathrm{O}\left(\frac{1}{\sqrt{n}}\right)$.
\end{proposition}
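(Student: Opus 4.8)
The plan is to verify the three hypotheses of \Cref{thm:bound-func} for the birth-death chain on the non-negative integers with birth probability $\theta_0 \in (0,1/2)$, which then immediately yields the conclusion. The first task is to establish the generalized Lipschitz conditions of \Cref{assume:gen-lip}. For the transition kernel, $\log p_{\theta}(j|i)$ equals $\log\theta$ when $j=i-1$ and $\log(1-\theta)$ when $j=i+1$, so the identification in \Cref{example:bd} gives $|\log p_{\theta_1}(x_1|x_0) - \log p_{\theta_2}(x_1|x_0)| \le I_{[x_1=x_0-1]}|\log(\theta_1/\theta_2)| + I_{[x_1=x_0+1]}|\log((1-\theta_1)/(1-\theta_2))|$, which is exactly \cref{equation:lips_for_transition_probabilities} with $m=3$, $M^{(1)}_1 = I_{[x_1=x_0+1]}$, $M^{(1)}_2 = I_{[x_1=x_0-1]}$, $M^{(1)}_3 \equiv 1$, and the stated $f^{(1)}_k$. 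Crucially, the $M^{(1)}_k$ here are bounded by $1$, so $\sup_{x,y}|M^{(1)}_k(x,y)| \le 1 =: B$, verifying the boundedness hypothesis of \Cref{thm:bound-func}. For the invariant distribution, a positive recurrent birth-death chain with constant birth probability $\theta$ has geometric stationary law $q_\theta(x) \propto \left(\frac{\theta}{1-\theta}\right)^{x}$ (up to normalization $1 - \frac{\theta}{1-\theta}$), so $\log q_{\theta}(x)$ is affine in $x$ with slope $\log\frac{\theta}{1-\theta}$ plus a normalization term; taking the difference for $\theta_1,\theta_2$ and bounding gives \cref{equation:lips_for_stationary_distribution} with $M^{(2)}_k(x)$ affine in $x$ (as in \Cref{example:bd}, of the form $x-1$) and the $f^{(2)}_k$ built from $\log(\theta_0/\theta)$ and $\log((1-\theta_0)/(1-\theta))$.

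Next I would verify \Cref{assume:gen-lip}(\ref{assume:gen-lip(0)}) — the $O(1/n)$ bound on $\int |f^{(t)}_k(\theta,\theta_0)|^{2+\delta}\rho_n(d\theta)$ — using \Cref{prop:3}. Each $f^{(t)}_k(\theta,\theta_0)$ is a difference of logs of the form $\log\theta_0 - \log\theta$ or $\log(1-\theta_0) - \log(1-\theta)$; on the compact interval $[\,c_0, 1/2 - c_0\,]$ bounded away from $0$ and $1/2$ these are smooth with bounded derivative and vanish at $\theta=\theta_0$. Choosing $\rho_n$ to be the scaled Beta on $(0,1/2)$ with mean $\theta_0$ and variance $\sigma^2/n$ — achievable by taking shape parameters growing linearly in $n$, using the moment formulas $\E[Y]=m\frac{\alpha}{\alpha+\beta}$, $\Var[Y]=m^2\frac{\alpha\beta}{(\alpha+\beta)^2(\alpha+\beta+1)}$ with $m=1/2$ — \Cref{prop:3} gives $\int |f^{(t)}_k(\theta,\theta_0)|^{2+\delta}\rho_n(d\theta) < C_k^{(t)}/n$. (One must check $\rho_n$ eventually concentrates inside a compact subinterval of $(0,1/2)$; since its variance is $O(1/n)$ and mean $\theta_0 \in (0,1/2)$, this holds for $n$ large, and finitely many small $n$ do not affect the $O$-rate.) The moment condition $E_{q^{(0)}}|M^{(2)}_k(X_0)|^2 < \infty$ reduces, since $M^{(2)}_k(x)$ is affine in $x$, to $\sum_i i^2 q^{(0)}(i) < \infty$, which is assumed. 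Finally, geometric ergodicity of the chain (for $\theta_0 < 1/2$ the chain is positive recurrent and a standard Foster–Lyapunov drift argument with $V(i) = \beta^i$ for suitable $\beta>1$ gives geometric ergodicity) together with \cref{eq:alpha_mixing-coeff-sum} (via \Cref{theorem:jones}) gives $\sum_{k\ge1}\alpha_k^{\delta/(2+\delta)} < \infty$ for every $\delta>0$. The KL condition $\mathcal{K}(\rho_n,\pi)\le \sqrt{n}C$ follows because both $\rho_n$ and the prior $\pi$ are scaled Beta laws on $(0,1/2)$, and the KL divergence between two Beta distributions has a closed form in terms of log-Gamma functions and digamma functions; with $\rho_n$'s parameters growing like $O(n)$ and $\pi$ fixed, this KL grows like $O(\log n)$, which is $o(\sqrt n)$. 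Choosing $\delta = 1$ then gives $\epsilon_n \in O(\max(n^{-1/2}, n^{-1/2})) = O(n^{-1/2})$.

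The main obstacle is the verification of \Cref{assume:gen-lip}(\ref{assume:gen-lip(0)}) together with the $O(\sqrt n)$ KL bound for a \emph{single consistent choice} of $\rho_n$: the same sequence $\rho_n$ of scaled Beta distributions must simultaneously (a) have variance $O(1/n)$ so that the $f$-moment bounds hold via \Cref{prop:3}, (b) stay bounded away from the endpoints $0$ and $1/2$ so that the logarithmic $f^{(t)}_k$ have bounded derivatives, and (c) not drift from the fixed prior faster than $\sqrt n$ in KL. Tracking how the scaled-Beta shape parameters must scale with $n$ to meet the variance requirement, and then controlling the resulting Beta–Beta KL divergence (which requires care with the digamma and log-Beta asymptotics as the shape parameters diverge), is where the bulk of the technical work lies; everything else is a direct substitution into \Cref{example:bd} and \Cref{thm:bound-func}. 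I would also note explicitly that fixing $q^{(0)}$ equal to the invariant law under $\theta_0$ (which trivially has finite second moment, being geometric) recovers \Cref{thm:bd2} as the stationary special case, consistent with the one-line proof given there.
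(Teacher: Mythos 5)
Your overall structure matches the paper's proof: the same identification of $M^{(1)}_k$, $M^{(2)}_k$, $f^{(1)}_k$, $f^{(2)}_k$ as in \Cref{example:bd}, boundedness of the $M^{(1)}_k$, the geometric invariant law so that $M^{(2)}_k(x)$ is affine and the assumption $\sum_i i^2 q^{(0)}(i)<\infty$ gives the required second moment, a scaled Beta $\rho_n$ on $(0,1/2)$ with mean $\theta_0$ and variance $O(1/n)$, and a $\mathcal{K}(\rho_n,\pi)=O(\log n)=o(\sqrt n)$ bound (which the paper obtains through \Cref{prop:kl-prior-unif} and \Cref{lem:kl-prior}). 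Your explicit verification of the mixing hypothesis via a geometric drift function and \Cref{theorem:jones} is a welcome addition that the paper's proof leaves implicit.

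The genuine gap is in your verification of \Cref{assume:gen-lip}(\ref{assume:gen-lip(0)}) via \Cref{prop:3}. That proposition requires bounded partial derivatives, but $f^{(1)}_1(\theta,\theta_0)=\log(\theta_0/\theta)$ (and likewise $f^{(2)}_2$) has derivative $-1/\theta$, unbounded near $\theta=0$, and the point $\theta=0$ is an endpoint of the support of \emph{every} $\rho_n$: the scaled Beta has full support $(0,1/2)$ for all $n$, so the region where the mean-value bound fails is never excluded from the integral. Your remedy --- that $\rho_n$ ``eventually concentrates inside a compact subinterval'' and that ``finitely many small $n$ do not affect the $O$-rate'' --- misdiagnoses the problem: the issue is not small $n$ but the tail $\int_{(0,c_0)}|\log(\theta_0/\theta)|^{2+\delta}\rho_n(d\theta)$ at every $n$. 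This tail is in fact negligible (the Beta$(\alpha_n,\beta_n)$ density behaves like $\theta^{\alpha_n-1}$ with $\alpha_n\asymp n$ near $0$, while the integrand grows only logarithmically), but that estimate must actually be made; alternatively, do what the paper does in \Cref{thm:bd-ns}, to which \Cref{thm:bd2-ns} defers: bound $|\log(\theta_0/\theta)|^{2+\delta}$ by a power of $|\theta_0/\theta-1|$ and absorb the resulting $\theta^{-(2+\delta)}$ into the Beta integral, which converges once $\alpha_n>2+\delta$ and yields the $C/n$ rate directly by the Beta-function ratio computation. As written, the invocation of \Cref{prop:3} is not justified, so condition (1) of \Cref{assume:gen-lip} is not actually established; the rest of the argument is sound and aligned with the paper.
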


Since continuous functions on a compact domain are bounded, we have the following (easy) corollary (stated without proof).

\begin{corollary}\label{cor:bound-ss}
  Let $\{\data\}$ be generated by an $\alpha$-mixing Markov chain parametrized by $\theta_0 \in \Theta$ on a compact state space, and with initial distribution $q^{(0)}$. Suppose the $\alpha$-mixing coefficients satisfy $\sum_{k\geq 1} \alpha_k^{{\delta}/{(2+\delta)}} < +\infty$, and that~\Cref{assume:gen-lip} holds with continuous functions $M^{(1)}_k(\cdot,\cdot)$, $k \in \{1,2,\dots,m\}$.  Furthermore, assume that {there exists $\rho_n$ such that} $\mathcal{K}(\rho_n,\pi)\leq \sqrt{n}C$ for some constant C. Then, Theorem~\ref{thm:bound-func} is satisfied with $\epsilon_n=\mathrm{O}\left(\max(\frac{1}{\sqrt{n}},\frac{n^{\delta/2}}{n})\right)$.
\end{corollary}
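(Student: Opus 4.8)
The plan is to verify that the hypotheses of \Cref{cor:bound-ss} are precisely a specialization of those of \Cref{thm:bound-func}, so that the conclusion follows immediately. The only gap to fill is the boundedness requirement that appears in \Cref{thm:bound-func} but not in the corollary: we must produce a constant $B$ with $\sup_{x,y} |M^{(1)}_k(x,y)| < B$ and also check the finite-second-moment condition $\E_{q^{(0)}}|M^{(2)}_k(X_0)|^2 < +\infty$ for each $k \in \{1,\dots,m\}$.

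First I would invoke compactness of the state space $S \subseteq \mathbb{R}^m$ together with the assumed continuity of each $M^{(1)}_k(\cdot,\cdot)$: a continuous real-valued function on the compact set $S \times S$ attains its maximum, so $B_k := \sup_{(x,y) \in S \times S} |M^{(1)}_k(x,y)|$ is finite for each $k$, and we may take $B := 1 + \max_{1 \le k \le m} B_k$ to get a single uniform bound. By the same token, although \Cref{cor:bound-ss} does not explicitly posit continuity of the $M^{(2)}_k$, the functions $M^{(2)}_k$ inherited from \Cref{assume:gen-lip} are positive functions on the compact state space; if they are continuous they are bounded, and in any case the integrability condition $\int M^{(1)}_k(x_1,x_0)^{2+\delta} p_{\theta_j}(x_1|x_0) q^{(0)}_{\theta_j}(x_0)\,dx_1\,dx_0 < B$ of \Cref{assume:gen-lip}(ii) is automatic here since $M^{(1)}_k$ is bounded. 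For the moment condition on $M^{(2)}_k$: on a compact state space any measurable $M^{(2)}_k$ arising in the examples is bounded (being continuous there, or in the discrete case a finite list of values), hence $\E_{q^{(0)}}|M^{(2)}_k(X_0)|^2 \le (\sup_x |M^{(2)}_k(x)|)^2 < +\infty$ trivially.

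With these two facts in hand, all hypotheses of \Cref{thm:bound-func} hold: the chain is $\alpha$-mixing with $\sum_{k \ge 1}\alpha_k^{\delta/(2+\delta)} < +\infty$ by assumption; \Cref{assume:gen-lip} holds by assumption; the $M^{(1)}_k$ are uniformly bounded by the compactness argument above; there exists $\rho_n \in \mathcal{F}$ with $\mathcal{K}(\rho_n,\pi) \le \sqrt{n}\,C$ by assumption; and $\E_{q^{(0)}}|M^{(2)}_k(X_0)|^2 < +\infty$ by the compactness argument. \Cref{thm:bound-func} then yields that the conditions of \Cref{thm:pac-bayes2} hold with $\epsilon_n = \mathrm{O}\!\left(\max\!\left(\frac{1}{\sqrt{n}}, \frac{n^{\delta/2}}{n}\right)\right)$, which is the claimed conclusion.

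I do not anticipate a serious obstacle here; the result is genuinely a corollary. The one point deserving care is whether the $M^{(2)}_k$ need a continuity/boundedness hypothesis of their own: \Cref{cor:bound-ss} as stated only mentions continuity of the $M^{(1)}_k$, so one should either note that \Cref{assume:gen-lip} is being invoked with $M^{(2)}_k$ that are in practice bounded on the compact state space (true in all our examples), or observe that on a compact state space the relevant moment is finite for any bounded-on-compacta choice. Since the statement is offered ``without proof'' in the excerpt, a one-line remark to this effect suffices, and the substantive content is entirely the extreme-value-theorem observation that continuous functions on compact sets are bounded.
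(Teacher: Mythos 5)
Your proposal is correct and matches the paper's intended argument: the paper states this corollary without proof, its one-line justification being exactly your extreme-value-theorem observation that the continuous $M^{(1)}_k$ are bounded on the compact state space, so \Cref{thm:bound-func} applies directly. Your additional remark about the moment condition on the $M^{(2)}_k$ is a sensible care point that the paper leaves implicit, but it does not change the route.
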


In general the $M^{(1)}_k$ functions will not be uniformly bounded (consider the case of a Gaussian-Markov simple linear model in~\Cref{example:slm}), and stronger conditions must be imposed on the data-generating Markov chain itself. The following assumption imposes a `drift' condition from~\cite{hajek}. Specifically,~\cite[Theorem 2.3]{hajek} shows that under the conditions of \Cref{assume:hajek}, the moment generating function of an aperiodic Markov chain $\{X_n\}$ can be upper bounded by a function of the moment generating function of $X_0$. Together with the $\alpha$-mixing condition,~\Cref{assume:hajek} implies that this Markov data generating process satisfies~\Cref{thm:pac-bayes2}.

\begin{assumption}\label{assume:hajek}
Consider a Markov chain $\{X_n\}$ parameterized by $\theta_0 \in \Theta$.  Let $\mathcal{M}_{-\infty}^n$ denote the $\sigma$-field generated by $\{X_{-\infty},\dots,X_{n-1},X_n\}$. Denote the stochastic process $\{M^k_{n}\}:=\{M^{(1)}_k(X_n,X_{n-1})\}$;  recall $M^{(1)}_k$, for each $k=1,\ldots,m_1$, are defined in \Cref{assume:gen-lip}. For each $k =1,\ldots,m$, the process $\{M^k_{n}\}$ is assumed to satisfy the following conditions:
\begin{itemize}
    \item The drift condition holds for $\{M^k_{n}\}$, i.e. $\E\left[M^k_{n}-M^k_{n-1}|\mathcal{M}_{-\infty}^{n-1}, M^k_{n-1}>a\right]\leq -\epsilon$ for some $\epsilon,a>0$.
    \item For some $\lambda>0$ and $\mathcal{D}>0$, $\E\left[e^{\lambda(M^k_{n}-M^k_{n-1})}|\mathcal{M}_{-\infty}^{n-1}\right]\leq \mathcal {D}$. 
\end{itemize}
\end{assumption}

Under this drift condition, the next theorem shows that~\Cref{thm:pac-bayes2} is satisfied. 

\begin{theorem} \label{thm:gen-nonstationary-hajek}
  Let $\{X_0,\ldots,X_n\}$ be generated by an aperiodic $\alpha$-mixing Markov chain parametrized by $\theta_0 \in \Theta$ and initial distribution $q^{(0)}$. Suppose that~\Cref{assume:gen-lip} and \Cref{assume:hajek} hold, and that the $\alpha$-mixing coefficients satisfy $\sum_{k \geq 1} \alpha_k^{\delta/(2+\delta)} < +\infty$. Furthermore, assume $\mathcal{K}(\rho_n,\pi)\leq \sqrt{n}C$ for some constant $C>0$. If $\int e^{\lambda M^{(1)}_k(y,x)}p_{\theta_0}(y|x)q_1^{(0)}(x) dx < +\infty$ for all $k=1,\ldots,m_1$, then the conditions of \Cref{thm:pac-bayes2} are satisfied with $\epsilon_n=\mathrm{O} \left(\max(\frac{1}{\sqrt{n}},\frac{n^{\delta/2}}{n})\right)$.
\end{theorem}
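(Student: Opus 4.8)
The plan is to verify the three sufficient conditions of~\Cref{thm:pac-bayes2} by showing that each holds with $\epsilon_n = \mathrm{O}\left(\max(\tfrac{1}{\sqrt n},\tfrac{n^{\delta/2}}{n})\right)$. Condition (iii), $\mathcal{K}(\rho_n,\pi)\leq\sqrt n\,C$, is assumed directly, so the work is in conditions (i) and (ii). The overarching idea — as in~\Cref{thm:lip-gen} and~\Cref{thm:bound-func} — is that~\Cref{assume:gen-lip} decouples the parameters from the samples: $|r_n(\theta,\theta_0)|$ is controlled by $\sum_k (\sum_i M^k_i)|f^{(1)}_k(\theta,\theta_0)| + \sum_k M^{(2)}_k(X_0)|f^{(2)}_k(\theta,\theta_0)| + |Z_0|$-type terms, and~\Cref{assume:gen-lip}~(\ref{assume:gen-lip(0)}) together with~\cref{assume:gen-lip(2)} makes the $\int|f^{(t)}_k(\theta,\theta_0)|^{\cdot}\rho_n(d\theta)$ factors small (of order $n^{-1/(2+\delta)}$ or $n^{-1}$). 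What is new here, relative to~\Cref{thm:bound-func}, is that the $M^{(1)}_k$ are not uniformly bounded, so we cannot bound their moments trivially; instead we must use~\Cref{assume:hajek} and the Hajek drift result.

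First I would handle condition (i). By~\Cref{prop:1a}, $\int\E[r_n(\theta,\theta_0)]\rho_n(d\theta) = \sum_{i=1}^n\int\E_{\theta_0}\big[\log\tfrac{p_{\theta_0}(X_i|X_{i-1})}{p_\theta(X_i|X_{i-1})}\big]\rho_n(d\theta) + \int\E[Z_0]\rho_n(d\theta)$. Bounding the integrand by~\cref{equation:lips_for_transition_probabilities}, taking expectations, and applying Tonelli gives $\sum_{i=1}^n\sum_{k=1}^m \E_{\theta_0}[M^k_i]\int|f^{(1)}_k(\theta,\theta_0)|\rho_n(d\theta)$ plus the $Z_0$ term controlled by~\cref{equation:lips_for_stationary_distribution}. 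The key sub-step is to show $\sup_i\E_{\theta_0}[M^k_i] < +\infty$; here the hypothesis $\int e^{\lambda M^{(1)}_k(y,x)}p_{\theta_0}(y|x)q^{(0)}(x)\,dx < +\infty$ gives a finite exponential moment of $M^k_1$, and~\cite[Theorem 2.3]{hajek} (invoked via~\Cref{assume:hajek}) propagates a uniform-in-$n$ bound on the moment generating function of $M^k_n$, hence a uniform bound on $\E_{\theta_0}[M^k_i]$. Combined with $\int|f^{(1)}_k(\theta,\theta_0)|\rho_n(d\theta) < C/\sqrt n$ from~\cref{assume:gen-lip(2)}, the double sum is $\mathrm{O}(n\cdot 1\cdot n^{-1/2}) = \mathrm{O}(\sqrt n)$, so condition (i) holds with $\epsilon_n = \mathrm{O}(1/\sqrt n)$.

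Next, condition (ii): $\int\Var(r_n(\theta,\theta_0))\rho_n(d\theta)$. I would apply~\Cref{prop:2}, which bounds $\Var(r_n(\theta,\theta_0))$ by a double sum over $i,j$ of $\big(\tfrac 4n + 2n^{\delta/2}(C^{(i)}+C^{(j)}+2\sqrt{C^{(i)}C^{(j)}})\big)\alpha_{|i-j|-1}^{\delta/(2+\delta)}$ plus a single sum in $i$ and the $\Cov(Z_0,Z_0)$ term. The coefficients $C^{(i)}_{\theta,\theta_0} = \int|\log\tfrac{p_{\theta_0}(x_i|x_{i-1})}{p_\theta(x_i|x_{i-1})}|^{2+\delta}p_{\theta_0}(x_i|x_{i-1})q^{(i-1)}_{\theta_0}(x_{i-1})\,dx_i\,dx_{i-1}$, after applying~\cref{equation:lips_for_transition_probabilities} and expanding the power of the sum, are bounded (up to constants depending only on $m$ and $\delta$) by $\sum_k \E_{\theta_0}[(M^k_i)^{2+\delta}]\cdot$ (a $\rho_n$-dependent factor that is absorbed when we integrate against $\rho_n$ — more precisely, I'd integrate $C^{(i)}$ against $\rho_n(d\theta)$ and use~\Cref{assume:gen-lip}~(\ref{assume:gen-lip(0)}) to get $\int C^{(i)}\rho_n(d\theta) \lesssim \tfrac 1n \sum_k \E_{\theta_0}[(M^k_i)^{2+\delta}]$). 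Again the crucial uniform bound $\sup_i\E_{\theta_0}[(M^k_i)^{2+\delta}] < +\infty$ comes from~\Cref{assume:hajek} and~\cite[Theorem 2.3]{hajek}, since a finite MGF dominates all polynomial moments. Then $\int C^{(i)}\rho_n(d\theta) = \mathrm{O}(1/n)$ uniformly in $i$, the $n^{\delta/2}$ factor and the summation $\sum_k\alpha_k^{\delta/(2+\delta)} < \infty$ over the lag contribute a factor $\mathrm{O}(n^{\delta/2})$ and $\mathrm{O}(1)$ respectively, the $\tfrac 4n$ terms summed over the $\mathrm{O}(n)$ effective index pairs give $\mathrm{O}(1)$, and the $\Cov(Z_0,Z_0)\le\E[Z_0^2]$ term is a constant controlled via~\cref{equation:lips_for_stationary_distribution} and finiteness of $\E_{q^{(0)}}[M^{(2)}_k(X_0)^2]$ (needed, as in~\Cref{thm:bound-func}) together with $\int|f^{(2)}_k(\theta,\theta_0)|^2\rho_n(d\theta) = \mathrm{O}(n^{-2/(2+\delta)})$. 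Collecting terms, $\int\Var(r_n(\theta,\theta_0))\rho_n(d\theta) = \mathrm{O}(\max(\sqrt n, n^{\delta/2}))$, i.e. condition (ii) holds with $\epsilon_n = \mathrm{O}(\max(\tfrac{1}{\sqrt n},\tfrac{n^{\delta/2}}{n}))$. Taking the larger of the two rates for conditions (i) and (ii) and invoking~\Cref{thm:pac-bayes2} finishes the proof.

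The main obstacle is the passage from the local hypothesis ``$M^k_1$ has a finite exponential moment under $p_{\theta_0}(\cdot|\cdot)q^{(0)}$'' to the uniform-in-$n$ moment bounds $\sup_i\E_{\theta_0}[(M^k_i)^{2+\delta}] < +\infty$ in the \emph{non-stationary} regime, where the marginal law $q^{(i-1)}_{\theta_0}$ of $X_{i-1}$ drifts with $i$. This is exactly what~\Cref{assume:hajek} is designed to buy us: the drift condition plus the one-step exponential increment bound feed into~\cite[Theorem 2.3]{hajek} to yield a bound on the MGF of $M^k_n$ that is uniform in $n$, from which all the polynomial moment bounds follow. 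Care is needed to check that the hypothesis $\int e^{\lambda M^{(1)}_k(y,x)}p_{\theta_0}(y|x)q^{(0)}(x)\,dx < +\infty$ supplies the required control on the MGF of $M^k_1 = M^{(1)}_k(X_1,X_0)$ at the initialization, so that Hajek's recursion can be started; everything downstream is then routine bookkeeping of the orders of $n$ already carried out in the proofs of~\Cref{thm:lip-gen} and~\Cref{thm:bound-func}.
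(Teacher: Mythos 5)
Your plan follows essentially the same route as the paper's proof: verify conditions (i) and (ii) of \Cref{thm:pac-bayes2} via \Cref{prop:1a} and \Cref{prop:2}, with the only genuinely new ingredient being the uniform-in-$i$ bounds $\sup_i \E[M^{(1)}_k(X_i,X_{i-1})]<\infty$ and $\sup_i \E[M^{(1)}_k(X_i,X_{i-1})^{2+\delta}]<\infty$, obtained exactly as you propose from \Cref{assume:hajek} and \cite[Theorem 2.3]{hajek} started at the finite exponential moment of $M^{(1)}_k(X_1,X_0)$ guaranteed by the integrability hypothesis (the paper makes the "MGF dominates polynomial moments" step explicit via $\kappa M\le e^{\kappa M}$ and $\kappa^{2+\delta}M^{2+\delta}\le e^{\kappa M}+C_\delta$), and the remaining bookkeeping is identical to \Cref{thm:bound-func}. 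The one deviation is your treatment of the initial-state term: the paper sets both $q^{(0)}_1$ and $q^{(0)}_2$ to the single known, non-parameterized initial distribution $q^{(0)}$, so $Z_0\equiv 0$ and the $\Cov(Z_0,Z_0)$ term vanishes; your route instead keeps $Z_0$ and controls it through \cref{equation:lips_for_stationary_distribution} and $\E_{q^{(0)}}[M^{(2)}_k(X_0)^2]<\infty$, which is not among the hypotheses of this theorem. This extra condition is unnecessary once you note that the initial density does not depend on $\theta$ in the non-stationary setting, so the discrepancy is harmless, but as written your proof invokes an assumption the statement does not provide.
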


Verifying the conditions in Theorem~\ref{thm:gen-nonstationary-hajek} can be quite challenging in general.
Instead, we suggest a different approach that requires $V$-geometric ergodicity.
Unlike the drift condition in~\Cref{assume:hajek}, $V$-geometric ergodicity additionally requires the existence of a petite set. However, geometric ergodicity is a fairly standard property that has already been established from a number of important Markov chains.
As noted before, geometric ergodicity implies $\alpha$-mixing of the Markov chain with geometrically decaying mixing coefficients. As with \Cref{thm:gen-nonstationary-hajek}, we assume that the Markov chain is aperiodic. It is well known that in case of a periodic Markov chain with period $d$, the notion of geometric ergodicity only makes sense for the $d$-chain $\{X_{dn}\}.$%The following theorem requires the data generating Markov chain to be $V$-geometrically ergodic; see~\Cref{define:v-ergodicity}. 
\begin{theorem}\label{thm:lip-gen-ns}
 Let $\{\data\}$ be generated by an aperiodic Markov chain parametrized by $\theta_0 \in \Theta$ with known initial distribution $q^{(0)}$, and assumed to be $V$-geometrically ergodic for some $V : \mathbb R^m \to [1,\infty)$. Suppose that ~\Cref{assume:gen-lip} holds and $\int M^{(1)}_k(y,x)^{2+\delta} p_{\theta_0}(y|x) dy  < V(x)\enspace \forall \enspace k,x \text{ and some }\delta>0$. Furthermore, assume that $\mathcal{K}(\rho_n,\pi)\leq \sqrt{n}C$ for some constant $C>0$. If the initial distribution $q^{(0)}$ satisfies $E_{q^{(0)}}[V(X_0)] < +\infty$, then the conditions of \Cref{thm:pac-bayes2} are satisfied with $\epsilon_n=\mathrm{O}\left(\max(\frac{1}{\sqrt{n}},\frac{n^{\delta/2}}{n})\right)$.
\end{theorem}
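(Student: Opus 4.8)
The plan is to verify the three conditions of Corollary~\ref{thm:pac-bayes2} with $\epsilon_n = \mathrm{O}(\max(n^{-1/2}, n^{\delta/2 - 1}))$, using Propositions~\ref{prop:1a} and~\ref{prop:2} to reduce the mean and variance of $r_n(\theta,\theta_0)$ to one-step quantities, and then controlling those one-step quantities using $V$-geometric ergodicity. Condition (iii) is immediate from the hypothesis $\mathcal{K}(\rho_n,\pi) \le \sqrt{n}C$, so the work is in conditions (i) and (ii). The central observation is that $V$-geometric ergodicity gives two things at once: first (via \cref{theorem:jones}), geometrically decaying $\alpha$-mixing coefficients, so that $\sum_{k\ge 1}\alpha_k^{\delta/(2+\delta)} < \infty$; and second, it lets us bound the marginal expectations $\E_{q^{(i-1)}_{\theta_0}}$ appearing in $C^{(i)}_{\theta_0,\theta}$ uniformly in $i$.

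For condition (i): by Proposition~\ref{prop:1a}, $\int \E[r_n(\theta,\theta_0)]\rho_n(d\theta) = \sum_{i=1}^n \int \E_{\theta_0}[\log(p_{\theta_0}(X_i|X_{i-1})/p_\theta(X_i|X_{i-1}))]\rho_n(d\theta) + \int \E[Z_0]\rho_n(d\theta)$. Using the Lipschitz bound \cref{equation:lips_for_transition_probabilities}, each summand is at most $\sum_{k=1}^m \int M^{(1)}_k(x_i,x_{i-1}) p_{\theta_0}(x_i|x_{i-1})q^{(i-1)}_{\theta_0}(x_{i-1})\,dx_i\,dx_{i-1} \cdot \int |f^{(1)}_k(\theta,\theta_0)|\rho_n(d\theta)$. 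The $\theta$-integral is $\mathrm{O}(n^{-1/(2+\delta)})$ by \cref{assume:gen-lip(2)}. For the sample-space integral, I would use $\int M^{(1)}_k(x_i,x_{i-1})^{2+\delta}p_{\theta_0}(x_i|x_{i-1})\,dx_i < V(x_{i-1})$ and Jensen to get $\int M^{(1)}_k(x_i,x_{i-1})p_{\theta_0}(x_i|x_{i-1})\,dx_i < V(x_{i-1})^{1/(2+\delta)} \le V(x_{i-1})$, so the quantity is bounded by $\E_{q^{(i-1)}_{\theta_0}}[V(X_{i-1})]$. Now $V$-geometric ergodicity plus $\E_{q^{(0)}}[V(X_0)] < \infty$ implies $\sup_i \E_{q^{(i-1)}_{\theta_0}}[V(X_{i-1})] =: \bar V < \infty$ (apply the geometric-ergodicity inequality with the drift function as integrand, or use that the $V$-norm of $q^{(i)}_{\theta_0} - q_{\theta_0}$ decays geometrically while $q_{\theta_0}$ itself has finite $V$-mean). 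Hence condition (i) is bounded by $n \cdot m \cdot \mathrm{O}(n^{-1/(2+\delta)}) \cdot \bar V + \mathrm{O}(1) = \mathrm{O}(n^{1 - 1/(2+\delta)}) = \mathrm{O}(\sqrt n)$ (since $1 - 1/(2+\delta) \le 1/2 \iff \delta \le 0$... actually $1-1/(2+\delta) < 1/2$ fails; more carefully $n^{1-1/(2+\delta)} \le n\epsilon_n$ requires $\epsilon_n \ge n^{-1/(2+\delta)}$, which is consistent with $\epsilon_n = \mathrm{O}(\max(n^{-1/2}, n^{\delta/2-1}))$ when $\delta \le 1$), so the $\mathrm{O}$ statement goes through with the stated rate, and the $\E[Z_0]$ term is a constant handled by finiteness of $V$-moments of $q^{(0)}$ together with \cref{equation:lips_for_stationary_distribution} and the assumption $E_{q^{(0)}}[V(X_0)] < \infty$ (noting $M^{(2)}_k$ is controlled since $|\log q_{\theta_0} - \log q_\theta|$ is bounded using that $q^{(0)}$ has finite moments).

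For condition (ii): apply Proposition~\ref{prop:2} with $(\theta_1,\theta_2) = (\theta_0,\theta)$. The key point is that $C^{(i)}_{\theta_0,\theta} = \int M$-type integrals that, by the same Jensen-plus-Lipschitz argument, are bounded by $(\sum_k \int |f^{(1)}_k(\theta,\theta_0)|^{2+\delta})$-type factors times $\E_{q^{(i-1)}_{\theta_0}}[V(X_{i-1})] \le \bar V$; integrating against $\rho_n(d\theta)$ and using \cref{assume:gen-lip(0)} gives $\int C^{(i)}_{\theta_0,\theta}\rho_n(d\theta) = \mathrm{O}(1/n)$ uniformly in $i$. Substituting into \cref{eq:var-ll}: the double sum $\sum_{i,j}(\tfrac4n + 2n^{\delta/2}\mathrm{O}(1/n))\alpha_{|i-j|-1}^{\delta/(2+\delta)}$ — after integrating against $\rho_n$ — is bounded by $n \cdot \mathrm{O}(1 + n^{\delta/2-1}) \cdot \sum_{k\ge -1}\alpha_k^{\delta/(2+\delta)} = \mathrm{O}(\max(n, n^{\delta/2}))$, using the summability of the mixing coefficients. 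The single sum and the $\Cov(Z_0,Z_0)$ term are lower order (the latter a constant, finite by the $V$-moment assumption on $q^{(0)}$ and the two Lipschitz conditions). Dividing by $n$, condition (ii) holds with $\epsilon_n = \mathrm{O}(\max(n^{-1/2}, n^{\delta/2-1}))$ — here note $n^{\delta/2}/n = n^{\delta/2-1}$ dominates $1$ inside the bound after the division gives $\max(1,n^{\delta/2-1})\cdot(\ldots)$, matching the claimed rate.

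The main obstacle is establishing the uniform-in-$i$ bound $\sup_i \E_{q^{(i-1)}_{\theta_0}}[V(X_{i-1})] < \infty$ cleanly from the definition of $V$-geometric ergodicity (\Cref{define:v-ergodicity}) together with $\E_{q^{(0)}}[V(X_0)] < \infty$; this is the one place where we genuinely use geometric ergodicity rather than mere $\alpha$-mixing, and it requires relating the $V$-norm convergence $\|P_{\theta_0}(X_i \in \cdot \mid X_0 = x) - q_{\theta_0}\|_V \le C(x) r^{-i}$ to the convergence of $\E_{q^{(i)}_{\theta_0}}[V]$ — in particular one must check that the constant $C(x)$ can itself be integrated against $q^{(0)}$, or argue directly via the drift inequality that underlies geometric ergodicity (\Cref{theorem:v-geom}/\Cref{assume:drift}). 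Once that uniform moment bound is in hand, everything else is a mechanical substitution into Propositions~\ref{prop:1a} and~\ref{prop:2} exactly paralleling the stationary proof of Theorem~\ref{thm:lip-gen}, with $q_{\theta_0}$ replaced by the (uniformly $V$-integrable) marginals $q^{(i)}_{\theta_0}$.
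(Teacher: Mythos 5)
Your proposal is correct and follows essentially the same route as the paper: verify the conditions of \Cref{thm:pac-bayes2} via \Cref{prop:1a} and \Cref{prop:2}, use \Cref{assume:gen-lip} with Jensen's inequality to bound the conditional moments of $M^{(1)}_k$ by $V(X_{i-1})$, and then use $V$-geometric ergodicity (the bound $RV(x_0)\tau^{i-1}$ integrated against $q^{(0)}$, which is exactly where $E_{q^{(0)}}[V(X_0)]<\infty$ enters) to obtain the uniform-in-$i$ control of $\E_{q^{(i-1)}_{\theta_0}}[V]$, with geometric ergodicity also supplying the summable mixing coefficients. The only superfluous step is your treatment of $Z_0$: in this theorem both joint densities are built from the same known initial distribution $q^{(0)}$, so $Z_0\equiv 0$ and no moment condition on $M^{(2)}_k$ is needed.
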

{
%
%Presuming  \Cref{assume:gen-lip} and \Cref{assume:drift} hold for an appropriately chosen function $V(x)$ we can extend our result for the stationary Markov chains to non-stationary, ergodic Markov chains. It is not obvious that the conditions of \Cref{thm:lip-gen-ns} are satisfied. However, in the following propositions we demonstrate that the Markov chains demonstrated to satisfy the conditions of \Cref{thm:lip-gen} also satisfy those of \Cref{thm:lip-gen-ns}. 
%
}
The following Proposition~\ref{ex:slm} shows, the simple linear model satisfies Theorem~\ref{thm:lip-gen-ns} when the parameter $\theta_0$ is suitably restricted.  

\begin{proposition}\label{ex:slm}
Consider the {simple linear model} satisfying the equation
\begin{align}
    X_n & = \theta_0 X_{n-1}+W_n,
\end{align}
where $\{W_n\}$ are i.i.d. standard Gaussian random variables and $|\theta_0| < 2^{\frac{1}{4+2\delta}-1}$ for $\delta > 0$. Let $\mathcal{F}$ be the space of all scaled Beta distributions on $(-1,1)$ and suppose the prior $\pi$ is a uniform distribution on $(-1,1)$. Then, the conditions of \Cref{thm:lip-gen-ns} are satisfied with $\epsilon_n \in \mathrm{O}\left(\max(\frac{1}{\sqrt{n}},\frac{n^{\delta/2}}{n})\right)$, if the initial distribution $q^{(0)}$ satisfies $ E_{q^{(0)}}[X_0^{4+2\delta}] < +\infty$.
\end{proposition}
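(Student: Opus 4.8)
The plan is to verify the three hypotheses of \Cref{thm:lip-gen-ns} for the simple linear model: namely, that \Cref{assume:gen-lip} holds, that the integrability bound $\int M^{(1)}_k(y,x)^{2+\delta} p_{\theta_0}(y|x)\,dy < V(x)$ holds for an appropriate $V$-geometrically ergodic drift function $V$, and that $E_{q^{(0)}}[V(X_0)] < +\infty$. Once these are in place, \Cref{thm:lip-gen-ns} gives the conclusion directly with $\epsilon_n \in \mathrm{O}\left(\max(\frac{1}{\sqrt n}, \frac{n^{\delta/2}}{n})\right)$.

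First I would record the one-step transition density: since $X_n = \theta X_{n-1} + W_n$ with $W_n$ standard Gaussian, $p_\theta(y|x) = \frac{1}{\sqrt{2\pi}} \exp\left(-\tfrac12 (y-\theta x)^2\right)$, so $\log p_{\theta_0}(y|x) - \log p_\theta(y|x) = -\tfrac12(y-\theta_0 x)^2 + \tfrac12 (y - \theta x)^2 = (\theta_0 - \theta)\,xy + \tfrac12(\theta^2 - \theta_0^2)\,x^2$. Taking absolute values and using the triangle inequality yields the decomposition asserted in \Cref{example:slm}: $M^{(1)}_1(y,x) = |xy|$, $M^{(1)}_2(y,x) = x^2$, $f^{(1)}_1(\theta,\theta_0) = \theta - \theta_0$, $f^{(1)}_2(\theta,\theta_0) = \theta_0^2 - \theta^2$. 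Similarly, the invariant law of the stationary AR(1) is $\mathcal N(0, \sigma_0^2)$ with $\sigma_0^2 = 1/(1-\theta_0^2)$; comparing log-densities gives $\log q_{\theta_0}(x) - \log q_\theta(x) = \tfrac12(\theta_0^2 - \theta^2)\,x^2 + \text{const}$, so $M^{(2)}_1(x) = x^2/2$, $f^{(2)}_1 = \theta_0^2 - \theta^2$, and $M^{(2)}_2 = 0$. For the moment conditions in \Cref{assume:gen-lip}(i)–(ii): since $f^{(1)}_1, f^{(1)}_2, f^{(2)}_1$ are polynomials in $\theta$ on the bounded domain $(-1,1)$ with $f(\theta_0,\theta_0)=0$, \Cref{prop:3} applies provided we pick $\rho_n$ to be a scaled Beta on $(-1,1)$ with mean $\theta_0$ and variance $\sigma^2/n$, giving $\int |f^{(t)}_k|^{2+\delta}\rho_n(d\theta) < C_k^{(t)}/n$; and $\mathcal K(\rho_n,\pi) \le \sqrt n C$ holds because $\pi$ is uniform on $(-1,1)$ and the KL divergence of a concentrating scaled Beta from the uniform grows only like $\log n$, which is $o(\sqrt n)$.

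Next, the key integrability step: I need a function $V:\mathbb R \to [1,\infty)$ that simultaneously (a) serves as the drift function establishing $V$-geometric ergodicity of the AR(1) chain and (b) dominates $\int M^{(1)}_k(y,x)^{2+\delta} p_{\theta_0}(y|x)\,dy$. The natural candidate is $V(x) = 1 + |x|^{4+2\delta}$ (or a constant multiple thereof). For (b): $\int (x^2)^{2+\delta} p_{\theta_0}(y|x)\,dy = x^{4+2\delta}$, and $\int |xy|^{2+\delta} p_{\theta_0}(y|x)\,dy = |x|^{2+\delta}\, E|\theta_0 x + W|^{2+\delta}$, which by a standard Gaussian-moment bound is at most $|x|^{2+\delta}(|\theta_0 x|^{2+\delta} + c_\delta) = |\theta_0|^{2+\delta}|x|^{4+2\delta} + c_\delta |x|^{2+\delta}$; since $|\theta_0|^{2+\delta} < 1$, both integrals are bounded by $V(x)$ up to constants (and after absorbing constants into $V$). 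For (a): one checks the geometric drift inequality $\int V(y)\,p_{\theta_0}(y|x)\,dy = E(1 + |\theta_0 x + W|^{4+2\delta}) \le \lambda V(x) + b$ for some $\lambda < 1$ and $b < \infty$ outside a compact set — this works precisely because raising to the power $4+2\delta$ scales the leading term by $|\theta_0|^{4+2\delta}$, and the hypothesis $|\theta_0| < 2^{1/(4+2\delta)-1}$ is exactly what forces $|\theta_0|^{4+2\delta} < 2^{1-(4+2\delta)} \cdot 2^{(4+2\delta)-1}$... more carefully, it ensures the expansion constant stays below $1$ after accounting for the binomial cross-terms from $(\theta_0 x + W)^{4+2\delta}$; together with the minorization on compacts (the Gaussian kernel has a positive density everywhere) this gives $V$-geometric ergodicity by \Cref{theorem:v-geom}. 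Finally, $E_{q^{(0)}}[V(X_0)] = 1 + E_{q^{(0)}}[X_0^{4+2\delta}] < +\infty$ is precisely the stated hypothesis on the initial distribution.

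The main obstacle is the drift verification in step (a): one has to control $E(\theta_0 x + W)^{4+2\delta}$ for non-integer exponent $4+2\delta$, expanding or bounding it so that the coefficient of the leading $|x|^{4+2\delta}$ term is strictly less than $1$, and it is here that the somewhat unusual constraint $|\theta_0| < 2^{1/(4+2\delta)-1}$ enters — I would use the inequality $|a+b|^{p} \le 2^{p-1}(|a|^p + |b|^p)$ with $p = 4+2\delta$ and $a = \theta_0 x$, $b = W$, giving $E|\theta_0 x + W|^{4+2\delta} \le 2^{4+2\delta-1}(|\theta_0|^{4+2\delta}|x|^{4+2\delta} + E|W|^{4+2\delta})$, so the leading coefficient is $2^{3+2\delta}|\theta_0|^{4+2\delta}$, which is $< 1$ exactly when $|\theta_0|^{4+2\delta} < 2^{-(3+2\delta)}$, i.e. $|\theta_0| < 2^{-(3+2\delta)/(4+2\delta)} = 2^{1/(4+2\delta) - 1}$, matching the hypothesis. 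Everything else is routine: assembling the pieces and invoking \Cref{thm:lip-gen-ns}.
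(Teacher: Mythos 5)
Your proposal is correct and follows essentially the same route as the paper's own proof: the same decomposition with $M^{(1)}_1=|xy|$, $M^{(1)}_2=x^2$, the same use of \Cref{prop:3} and a concentrating scaled Beta $\rho_n$ with the $\log n$ bound on $\mathcal K(\rho_n,\pi)$, and the same drift function $V(x)\propto 1+|x|^{4+2\delta}$ verified via $|a+b|^p\le 2^{p-1}(|a|^p+|b|^p)$, which is exactly where the constraint $|\theta_0|<2^{\frac{1}{4+2\delta}-1}$ enters in the paper as well. The only minor sloppiness (dropping the $2^{1+\delta}$ factor in the $E|\theta_0x+W|^{2+\delta}$ bound) is harmless since constants are absorbed into $V$, just as in the paper.
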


\section{Misspecified Models}
{
We shown next how our results can be extended to the misspecified model setting. 
Assume that the true data generating distribution is parametrized by $\theta_0 \not \in \Theta$. Let $\theta^*_n := \arg \min_{\theta\in \Theta} \kld$ represent the closest parametrized distribution in the variational family to the data-generating distribution. Further, assume that our usual hypotheses are satisfied:
\begin{enumerate}[align=left,leftmargin=*]
    \item $ \int \E[r_n(\theta,\theta^*_n)] \rho_n(d\theta) \leq n\epsilon_n $,
    \item $\int \Var \left( r_n(\theta,\theta^*_n
    ) \right)\rho_n (d\theta)\leq n\epsilon_n$.
\end{enumerate}

Now, since $ r_n(\theta,\theta_0) =  r_n (\theta,\theta^*_n)+r_n(\theta^*_n,\theta_0)$, we have
\begin{equation}~\label{eq:miss-kl}
    \int \kld \rho_n(d\theta)\leq \E[r_n(\theta_0,\theta^*_n)]+n\epsilon_n.
\end{equation}
Similarly, decomposing the variance it follows that
\begin{align}
    \Var[r_n(\theta,\theta_0)] & = \Var[r_n (\theta,\theta^*_n)]+\Var[r_n(\theta^*_n,\theta_0)]+2\Cov[r_n (\theta,\theta^*_n),r_n(\theta^*_n,\theta_0)].\\
    \intertext{Using the fact that $2ab\leq{a^2+b^2}$ to the covariance term $2\Cov[r_n (\theta,\theta^*_n),r_n(\theta^*_n,\theta_0)]=2\E \left[\left(r_n (\theta,\theta^*_n)-\E[r_n (\theta,\theta^*_n)]\right)\left(r_n(\theta^*_n,\theta_0)-\E[r_n(\theta^*_n,\theta_0)]\right)\right]$, we have }
    \Var[r_n(\theta,\theta_0)] & \leq 2\Var[r_n (\theta,\theta^*_n)]+2\Var[r_n(\theta^*_n,\theta_0)].
    \intertext{Integrating both sides with respect to $\rho_n(d\theta)$ we get}
    \int \Var[r_n(\theta,\theta_0)] \rho_n(d\theta) &  \leq 2\int \Var[r_n (\theta,\theta^*_n)] \rho_n(d\theta)+2\int \Var[r_n(\theta^*_n,\theta_0)] \rho_n(d\theta)\nonumber\\
    & \leq 2n\epsilon_n +2\Var[r_n(\theta^*_n,\theta_0)].~\label{eq:miss-var}
\end{align}
Consequently, we arrive at the following result:
\begin{theorem}\label{thm:pac-bayes-miss}
Let $\mathcal{F}$ be a subset of all probability distributions parameterized by $\Theta$. Let  $\theta^*_n = \arg \min_{\theta\in \Theta} \kld$ and assume there exist $\epsilon_n >0$ and $\rho_n\in \mathcal{F}$ such that 
\begin{enumerate}[align=left,leftmargin=*]
    \item $ \int \E[r_n(\theta,\theta^*_n)] \rho_n(d\theta) \leq n\epsilon_n $,
    \item $\int \Var \left( r_n(\theta,\theta^*_n
    ) \right)\rho_n (d\theta)\leq n\epsilon_n$, and
    \item $\mathcal{K}(\rho_n,\pi)\leq n\epsilon_n$.
%    \item $\alpha \in (0,1)$,~\text{and}
%    \item $(\epsilon,\eta)\in (0,1)^2$.
\end{enumerate}
Then, for any $\alpha \in (0,1)$ and $(\epsilon, \eta) \in (0,1)\times(0,1)$,
\begin{align}~\label{eq:thmmiss}
    P\left[ \int  D_{\alpha}(P^{(n)}_{\theta},P^{(n)}_{\theta_0})\Tilde{\pi}_{n,\alpha}(d\theta|X^{(n)} )\leq \frac{ (\alpha+1)n\epsilon_n+\E[r_n(\theta_0,\theta^*_n)]+\alpha\sqrt{\frac{2n\epsilon_n+2\Var[r_n(\theta^*_n,\theta_0)]}{\eta}}-\log(\epsilon) } {  1-\alpha}  \right]\\
    \nonumber \geq 1-\epsilon-\eta.
\end{align}
\end{theorem}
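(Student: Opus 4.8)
The plan is to reduce Theorem~\ref{thm:pac-bayes-miss} to Theorem~\ref{thm:pac-bayes} (equivalently, to Proposition~\ref{prop:1}) applied with the reference parameter $\theta^*_n$ in place of $\theta_0$, and then to translate the resulting bound, which is phrased in terms of $r_n(\theta,\theta^*_n)$, back into a bound phrased in terms of $r_n(\theta,\theta_0)$ using the additive decomposition $r_n(\theta,\theta_0) = r_n(\theta,\theta^*_n) + r_n(\theta^*_n,\theta_0)$ that is already recorded in the excerpt. The loss function in the conclusion is still $D_\alpha(P^{(n)}_\theta,P^{(n)}_{\theta_0})$; note this is \emph{not} changed to $D_\alpha(P^{(n)}_\theta,P^{(n)}_{\theta^*_n})$, so a little care is needed about which R\'enyi divergence the variational machinery actually controls.

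First I would invoke Proposition~\ref{prop:1} (or directly the chain of estimates behind Theorem~\ref{thm:pac-bayes}) but with $\theta_0$ replaced throughout by $\theta^*_n$: for any $\alpha\in(0,1)$, $\epsilon\in(0,1)$, with probability at least $1-\epsilon$,
\begin{align*}
    \int D_\alpha(P^{(n)}_\theta,P^{(n)}_{\theta^*_n})\,\tilde\pi_{n,\alpha}(d\theta|X^n)
    \le \frac{\alpha}{1-\alpha}\int r_n(\theta,\theta^*_n)\,\tilde\pi_{n,\alpha}(d\theta|X^n)
    + \frac{\mathcal K(\tilde\pi_{n,\alpha},\pi)+\log(1/\epsilon)}{1-\alpha}.
\end{align*}
Then, exactly as in the proof of Theorem~\ref{thm:pac-bayes}, I would bound the data-dependent right-hand side in probability: use Markov's inequality on $\int r_n(\theta,\theta^*_n)\rho_n(d\theta)$ (whose mean under $P_{\theta_0}$ is controlled by hypothesis (i) and whose variance is controlled by hypothesis (ii)) to get, with probability at least $1-\eta$, a bound of order $n\epsilon_n + \sqrt{n\epsilon_n/\eta}$; and use hypothesis (iii), $\mathcal K(\rho_n,\pi)\le n\epsilon_n$, together with the ELBO optimality of $\tilde\pi_{n,\alpha}$ (i.e.\ that $\tilde\pi_{n,\alpha}$ maximizes the objective in~\eqref{eq:elbo} with $\theta_0$ replaced by $\theta^*_n$, which changes the objective only by the additive constant $\int r_n(\theta^*_n,\theta_0)\rho(d\theta)$ independent of $\rho$, hence does not change the argmax). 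Combining via a union bound over the two events gives, with probability at least $1-\epsilon-\eta$,
\begin{align*}
    \int D_\alpha(P^{(n)}_\theta,P^{(n)}_{\theta^*_n})\,\tilde\pi_{n,\alpha}(d\theta|X^n)
    \le \frac{(\alpha+1)n\epsilon_n + \alpha\sqrt{n\epsilon_n/\eta} - \log\epsilon}{1-\alpha}.
\end{align*}

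Next I would pass from $D_\alpha(P^{(n)}_\theta,P^{(n)}_{\theta^*_n})$ on the left to $D_\alpha(P^{(n)}_\theta,P^{(n)}_{\theta_0})$. Here I would not work directly with the R\'enyi divergences but rather revisit the one inequality in the proof of Proposition~\ref{prop:1} where the loss enters: that argument shows $\E_{\theta_0}\exp\{(1-\alpha)\int D_\alpha(P^{(n)}_\theta,P^{(n)}_{\theta_0})\rho(d\theta) - \alpha\int r_n(\theta,\theta_0)\rho(d\theta) - \mathcal K(\rho,\pi)\} \le 1$, because $D_\alpha(P^{(n)}_\theta,P^{(n)}_{\theta_0})$ is by definition $\tfrac{1}{\alpha-1}\log\E_{\theta_0}e^{-\alpha r_n(\theta,\theta_0)}$ and a Donsker--Varadhan / Jensen step handles the sup over $\rho$. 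Using $r_n(\theta,\theta_0)=r_n(\theta,\theta^*_n)+r_n(\theta^*_n,\theta_0)$ inside this exponential-moment inequality keeps $\theta_0$ as the expectation's reference measure while splitting the log-likelihood ratio, and the term $r_n(\theta^*_n,\theta_0)$ is deterministic-in-$\rho$ so it pulls out; carrying it through the Markov-inequality step of Theorem~\ref{thm:pac-bayes} contributes exactly the extra additive $\E[r_n(\theta_0,\theta^*_n)]$ in the mean (via~\eqref{eq:miss-kl}) and, through~\eqref{eq:miss-var} and the $2ab\le a^2+b^2$ covariance bound, replaces $n\epsilon_n$ by $2n\epsilon_n+2\Var[r_n(\theta^*_n,\theta_0)]$ inside the square root. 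Assembling these pieces yields precisely~\eqref{eq:thmmiss}.

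The main obstacle is the bookkeeping of \emph{which} reference distribution each object is measured against: the expectation $\E$ in hypotheses (i)--(ii) and in the probability statement is under the true $P_{\theta_0}$ (with $\theta_0\notin\Theta$), whereas the variational optimization and Proposition~\ref{prop:1} are most naturally stated relative to $\theta^*_n\in\Theta$, so one must be careful that the exponential-moment identity underpinning Proposition~\ref{prop:1} still holds with $P_{\theta_0}$ as the sampling law once $\theta_0$ is replaced by $\theta^*_n$ in the variational term --- it does, because that identity only uses the definition of $D_\alpha$ as a log-moment-generating function of $-\alpha r_n(\theta,\theta_0)$ under $P_{\theta_0}$, and the decomposition of $r_n$ is an exact algebraic identity. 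Everything else --- the two applications of Markov's inequality, the union bound, and the $2ab\le a^2+b^2$ step --- is routine and already laid out in the excerpt preceding the theorem statement.
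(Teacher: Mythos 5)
Your operative argument (the second half of your proposal) is the paper's own proof: keep Proposition~\ref{prop:1} and the Chebyshev/union-bound machinery of Theorem~\ref{thm:pac-bayes} with $\theta_0$ as the reference in \emph{both} the loss $D_{\alpha}(P^{(n)}_{\theta},P^{(n)}_{\theta_0})$ and the log-likelihood ratio $r_n(\theta,\theta_0)$, then convert the hypotheses about $\theta^*_n$ into bounds on $\int \kld \rho_n(d\theta)$ and $\int \Var[r_n(\theta,\theta_0)]\rho_n(d\theta)$ through the exact decomposition $r_n(\theta,\theta_0)=r_n(\theta,\theta^*_n)+r_n(\theta^*_n,\theta_0)$, i.e.\ through \cref{eq:miss-kl} and \cref{eq:miss-var}, and plug the results into \cref{eq:proof-step-miss} to obtain \cref{eq:thmmiss}. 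That part is correct, and your observation that the VB optimizer is unchanged when $\theta_0$ is replaced by $\theta^*_n$ in the variational objective (the two objectives differ by a term independent of $\rho$) is also right.

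The genuine problem is your opening step: invoking Proposition~\ref{prop:1} ``with $\theta_0$ replaced throughout by $\theta^*_n$'' to claim a $P^{(n)}_{\theta_0}$-probability bound on $\int D_{\alpha}(P^{(n)}_{\theta},P^{(n)}_{\theta^*_n})\Tilde{\pi}_{n,\alpha|X^n}(d\theta)$ with the clean rate $(\alpha+1)n\epsilon_n+\alpha\sqrt{n\epsilon_n/\eta}-\log\epsilon$. The proof of Proposition~\ref{prop:1} rests on the identity $\E^{(n)}_{\theta_0}[e^{-\alpha r_n(\theta,\theta_0)}]=e^{-(1-\alpha)D_{\alpha}(P^{(n)}_{\theta},P^{(n)}_{\theta_0})}$, which requires the sampling law and the second argument of the R\'enyi divergence to coincide. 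With data drawn from $P^{(n)}_{\theta_0}$ but $\theta^*_n$ in the exponent, $\E_{\theta_0}[e^{-\alpha r_n(\theta,\theta^*_n)}]=\int (p^{(n)}_{\theta})^{\alpha}(p^{(n)}_{\theta^*_n})^{-\alpha}p^{(n)}_{\theta_0}$, which is not $e^{-(1-\alpha)D_{\alpha}(P^{(n)}_{\theta},P^{(n)}_{\theta^*_n})}$ and can exceed it badly; any bound on $\int D_{\alpha}(P^{(n)}_{\theta},P^{(n)}_{\theta^*_n})\Tilde{\pi}_{n,\alpha|X^n}(d\theta)$ in the misspecified case must carry extra terms measuring the discrepancy between $P_{\theta_0}$ and $P_{\theta^*_n}$. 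Your hedge at the end (``it does, because that identity only uses the definition of $D_\alpha$ \dots under $P_{\theta_0}$'') is valid only for the un-replaced, $\theta_0$-referenced identity that you actually use later; it does not rescue the replaced version. Since the theorem's loss is $D_{\alpha}(P^{(n)}_{\theta},P^{(n)}_{\theta_0})$ and your final assembly never uses that first display, the fix is simply to delete the detour; what remains is precisely the paper's argument.
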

}

The proof of this theorem is straightforward and follows from the proof of \Cref{thm:pac-bayes} by plugging in the upper bounds for KL-divergence from \cref{eq:miss-kl}, and variance from \cref{eq:miss-var}  to \Cref{eq:proof-step-miss}. A sketch of the proof is presented in the appendix.

\section{Conclusion}

Concentration of the KL-VB model risk, in terms of the expected \alpren divergence, is well established under the i.i.d. data generating model assumption.
%~\cite{pierre,pati}. 
Here, we extended this to the setting of Markov data generating models, linking the concentration rate to the mixing and ergodic properties of the Markov model. 
Our results apply to both stationary and non-stationary Markov chains, as well as to the situation with misspecified models.
There remain a number of open questions. An immediate one is to extend the current analysis to continuous-time Markov chains and Markov jump processes, possibly using uniformization of the continuous time model. Another direction is to extend this to the setting of non-homogeneous Markov chains, where analogues of notions such as stationarity are less straightforward. Further, as noted in the introduction,~\cite{pati} establish PAC-Bayes bounds under slightly weaker `existence of test functions' conditions, while our results are established under the stronger conditions used by~\cite{pierre} for the i.i.d. setting. Weakening the conditions in our analysis is important, but complicated. 
A possible path is to build on results from~\cite{birge} who provides conditions form the existence of exponentially powerful test functions exist for distinguishing between two Markov chains. It is also known that there exists a likelihood ratio test separating any two ergodic measures~\cite{ryabko}. However, leveraging these to establish the PAC-Bayes bounds for the KL-VB posterior is a challenging effort that we leave to future papers. 
Finally it is of interest to generalize our PAC-bounds to posterior approximations beyond KL-variational inference, such as $\alpha$-R\'enyi posterior approximations~\cite{li2016renyi}, and loss-calibrated posterior approximations~\cite{LaSiGh2011, jaiswalstat2020}.

\appendix
\section{Definitions Related to Markov Chains}
As noted before, ergodicity plays an acute role in establishing our results. We consolidate various definitions used throughout the paper in this appendix. We assume that the parameterized Markov chains possess an invariant probability density or mass function $q_\theta$ under parameter $\theta \in \Theta$. %Ergodicity typically requires that the Markov chain be irreducible (or `communicating') and recurrent. We have defined the concepts for a general state space in the appendix. We start with the definition of an irreducible Markov chain for a Markov chain on a general state space.

%\begin{defn}[{Irreducible Markov Chain}]
%A Markov chain $\{X_n\}$ is called \psirr if for a measure $\psi$ on $\mathcal{B(X)}$, it follows that $\psi(A)>0 \implies \sum_{n\geq 1}$ $P^n(X_n\in A|X_0=x)>0 \enspace \forall \enspace x\in \mathcal{X}.$
%\end{defn}

%An irreducible Markov chain will eventually visit all sets $A$ satisfying $\psi(A)$ regardless of the initial state.
%We also define a stronger form of recurrence called a Harris recurrence Markov chain. A Harris recurrent Markov chain is also positive recurrent.

%\begin{defn}[{Harris recurrence}]~\label{define:harris}
%Define $\eta_a=\sum_{n\geq 1}I_{X_n\in A}$, the number of times the Markov chain visits the state A. A set $A$ is Harris recurrent if $P(\eta_A=\infty|X_0=x)=1,\enspace x\in A$.
%I.e. if starting from anywhere in a set $A$, the Markov chain $\{X_n\}$ returns to $A$ infinitely often with probability 1.
%\end{defn}
%If all states $A$ are Harris recurrent, then the Markov chain is called Harris recurrent Markov chain. Next we define a {\it positive Markov chain}.

%\begin{defn}[{Positive Markov chain}]~\label{define:positive}
%Suppose the Markov chain $\{X_n\}$ is $\psi$-irreducible, and admits an invariant probability measure. Then the Markov chain $\{X_n\}$ is called a positive Markov chain. 
%\end{defn}

%If a Markov chain is positive and Harris recurrent, then it is called a {\it positive Harris Markov chain}. A positive Harris Markov chain has a unique invariant measure. The proof of this fact, and examples of such positive Harris Markov chains can be found in \cite{meyntweedie}.

We defined $V$-geometric ergodicity in the previous sections. In this section, we provide a sufficient condition for a Markov chain to be $V$-geometrically ergodic. First, we recall the definition of petite sets,

%We begin by defining the resolvent. 
%\begin{defn}[{Resolvent}] Let $n\in \{0,1,2,\dots\}$ and $q_n$ be such that $q_n\geq0 \enspace \forall \enspace n$ and $\sum_{n=1}^{\infty}q_n=1$. Note that $q_n$ can be thought of being a probability mass function for a random variable "$q$" taking values on non-negative integers. Then, the resolvent of a Markov chain with respect to $q$ is given by $K_q(x,A)$ where,

%\begin{align}
%    K_q(x,A) & = \sum_{n=0}^{\infty} q_n P(X_n\in A |X_0 =x).
%\end{align}
%\end{defn}
\begin{defn}[{Petite Sets}] Let $\data$ be $n$ samples from a Markov chain taking values on the state space $\mathcal{X}$. Let $C$ be a set. We shall call $C$ to be $v_q$ petite if 
\begin{align*}
    K_q(x,B)\geq \upsilon_q(B)
\end{align*}
for all $x\in C$ and $B\in \mathcal{B(X)}$, and a non-trivial measure $\upsilon_q$ on $\mathcal{B(X)}$ 
\end{defn}

Now, let $\Delta V(x):=E[V(X_n)|X_{n-1}=x]-V(x)$ for $V : S \to [1,\infty)$.

\begin{assumption}[{Drift condition}]~\label{assume:drift} Suppose the chain $\{X_n\}$ is, aperiodic and \psirr. Let there exists a petite set $C$, constants $b<\infty, \beta>0$, and a non-trivial function $V : S \to [1,\infty)$ satisfying
\begin{align}
    \Delta V(x)\leq -\beta V(x) + bI_{x\in C}\enspace \forall x \in S.
\end{align}
\end{assumption}
  %And existence of such a function $\psi$ is the criterion to call the Markov chain \psirr.
If a Markov chain drifts towards a petite set then it is $V$-geometrically ergodic. Suppose, for simplicity, that $V(x)=|X|$. Then, the drift condition becomes $E[|X_n\|X_{x-1}]-|X_{n-1}|=-\beta |X_n|+bI_{X_n\in C}$. The left hand side of this equation represents the change in the state of the Markov chain in one time epoch. Thus, the condition in \Cref{assume:drift} essentially states that the Markov chain drifts towards a petite set $C$ and then, once it reaches that set, moves to any point in the state space with at least some probability independent of $C$. 

\begin{theorem}[{Geometrically ergodic theorem}]~\label{theorem:v-geom}
  Suppose that $\{X_n\}$ is satisfies \Cref{assume:drift}. Then, the set $S_V=\{x:V(x)<\infty\}$ is absorbing, i.e. $P_\theta(X_1\in S_V|X_0=x)=1$ $\forall x\in S_V$, and full, i.e. $\psi(S_V^c)=0$. Also, $\exists\enspace \text{constants }r>1, R<\infty$ such that, for any $A \in \mathcal{B}(S)$,
  \begin{equation}
      \left\|P_\theta(X_n \in A | X_0=x)- \int_A q_\theta(y) dy \right\|_V \leq R r^{-n}V(x).
  \end{equation}
\end{theorem}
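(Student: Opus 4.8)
The plan is to recognize that this is essentially Theorem~15.0.1 of~\cite{meyntweedie}, so rather than reproduce every estimate I would organize the argument into three parts matched to the way~\Cref{assume:drift} feeds the standard geometric-ergodicity machinery: (i) the structural claim that $S_V=\{x:V(x)<\infty\}$ is absorbing and full; (ii) upgrading the petite set $C$ to a small set using aperiodicity and $\psi$-irreducibility; and (iii) a split-chain / coupling argument converting the one-step drift inequality into the geometric $V$-norm bound.

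For (i), I would first read off from~\Cref{assume:drift} that $\E[V(X_1)\mid X_0=x]\le(1-\beta)V(x)+b\,I_{\{x\in C\}}$, so the right-hand side is finite for every $x\in S_V$; hence $V(X_1)<\infty$ almost surely and $P_\theta(X_1\in S_V\mid X_0=x)=1$, and iterating gives that $S_V$ is absorbing. Since $V$ is non-trivial, $S_V\neq\emptyset$, and because the chain is \psirr an absorbing set of positive $\psi$-measure has $\psi$-null complement, so $\psi(S_V^c)=0$, i.e. $S_V$ is full. From this point I would restrict the chain to $S_V$, on which it remains aperiodic and \psirr.

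For (ii)–(iii), I would invoke that for an aperiodic \psirr chain every petite set is small (\cite[Theorem~5.5.7]{meyntweedie}), so $C$ admits a one-step minorization $P_\theta(x,\cdot)\ge\varepsilon_0\,\nu(\cdot)$ for all $x\in C$. Next, writing $\lambda=1-\beta\in(0,1)$, I would iterate the drift inequality to obtain geometric control of the excursions away from $C$ of the form $\E_x\big[\sum_{k=0}^{\tau_C-1}\lambda^{-k}V(X_k)\big]\le c\,V(x)$ for a return time $\tau_C$ to $C$. Combining the minorization on $C$ with this geometric moment bound — through the Nummelin split chain, or equivalently by coupling two copies of the chain that attempt to couple at each simultaneous visit to $C$ — produces a coupling time whose moment generating function is finite, weighted by $V$ at the starting state. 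A standard coupling inequality then yields constants $r>1$ and $R<\infty$ with $\sup_{|g|\le V}\big|\E_x[g(X_n)]-\int g\,q_\theta\big|\le R\,r^{-n}V(x)$, which is the assertion.

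The hard part will be step (iii): converting the one-step geometric drift into a geometric bound on the $V$-weighted return time to $C$, and then showing the induced coupling time has an exponential moment — this is where aperiodicity is indispensable (so the two coupled chains can occupy $C$ simultaneously with positive probability) and where essentially all the technical content of~\cite[Chapter~15]{meyntweedie} lives. In practice I would either cite that theorem directly, since~\Cref{assume:drift} is exactly its hypothesis once $C$ is known to be petite, or carry out the split-chain argument in full; the remaining pieces — the geometric-series summation in the drift iteration and the passage between split-chain and original-chain $V$-norms — are routine.
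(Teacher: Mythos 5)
The paper offers no proof of this statement at all: it is quoted as the standard geometric ergodic theorem of Meyn and Tweedie (Theorem 15.0.1 / Chapter 15), which is exactly what you identify, so citing that result (or reconstructing the split-chain argument as you outline) is precisely the intended route and your sketch of parts (i)--(iii) is correct. One caveat if you carry out step (ii)--(iii) in full: aperiodicity upgrades the petite set $C$ to a \emph{small} set, but smallness only guarantees an $m$-step minorization $P^m_\theta(x,\cdot)\ge\varepsilon_0\,\nu(\cdot)$ for some $m\ge 1$, not a one-step one, so the coupling/split-chain construction must be run on the $m$-skeleton (with the drift estimate transferred to it) rather than attempted at every simultaneous visit to $C$.
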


Any aperiodic and \psirr Markov chain satisfying the drift condition is geometrically ergodic. A consequence of \Cref{equation:f-norm-monotone} is that if, $\{X_n\}$ is $V$-geometrically ergodic, then for any other function $U, \text{ such that }|U|<V$, it is also $U$-geometrically ergodic. In essence, a geometrically ergodic Markov chain is asymptotically uncorrelated in a precise sense. Recall $\rho$-mixing coefficients defined as follows. Let $\mathcal{A}$ be a sigma field and $\mathcal{L}^2(A)$ be the set of square integrable, real valued, $\mathcal{A}$ measurable functions.

\begin{defn}[{$\rho$-mixing coefficient}]~\label{def:rho-mix}
Let $\mathcal{M}_i^j$ denote the sigma field generated by the measures $X_k, \text{ where } i\leq k\leq j$. Then,
\begin{equation}
    \rho_k = \underset{t>0}{\sup}\underset{(f,g) \in \mathcal{L}^2\left(\mathcal{M}_{-\infty}^t\right) \times  \mathcal{L}^2\left(\mathcal{M}_{t+k}^{\infty}\right)}{\sup}\left|\mathrm{Corr}(f,g)\right|,
\end{equation}
where $\mathrm{Corr}$ is the correlation function.
\end{defn}

\begin{theorem}\label{theorem:jones}
  If $X_n$ is geometrically ergodic, then it is $\alpha$-mixing. That is, there exists a constant $c>0$ such that $\alpha_k \in O(e^{-ck})$.
\end{theorem}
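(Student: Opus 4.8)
The plan is to reduce the strong mixing coefficient between the ``past'' $\sigma$-field $\mathcal{M}_{-\infty}^{0}$ and the ``future'' $\sigma$-field $\mathcal{M}_{k}^{\infty}$ to a two-time-point quantity using the Markov property together with stationarity, and then to control that quantity through the geometric convergence to the invariant distribution, which I denote $q$. Throughout, $\{X_n\}$ is stationary (this is part of the definition of geometric ergodicity, \Cref{define:v-ergodicity}), so $X_0\sim q$, and I write $P^{k}(x,\cdot)$ for the $k$-step transition kernel. \textbf{Step 1 (Markov reduction).} Conditionally on $X_0$, the $\sigma$-field $\mathcal{M}_{-\infty}^{0}$ is independent of $\mathcal{M}_{1}^{\infty}\supseteq\mathcal{M}_{k}^{\infty}$; and conditionally on $X_k$, $\sigma(X_0)$ is independent of $\mathcal{M}_{k}^{\infty}$. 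Hence, for $A\in\mathcal{M}_{-\infty}^{0}$ and $B\in\mathcal{M}_{k}^{\infty}$, writing $g(x):=P(A\mid X_0=x)$ and $h(y):=P(B\mid X_k=y)$ (both $[0,1]$-valued) and using the Markov property to write $P(B\mid X_0=x)=(P^{k}h)(x)$ with $(P^{k}h)(x):=\int h(y)\,P^{k}(x,dy)$, conditioning on $X_0$ gives $P(A\cap B)-P(A)P(B)=\Cov\big(g(X_0),(P^{k}h)(X_0)\big)$.

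\textbf{Step 2 (reduction to convergence to $q$).} Since $\E_q[(P^{k}h)(X_0)]=q(h)$, the covariance equals $\int g(x)\big[(P^{k}h)(x)-q(h)\big]\,q(dx)$. Using $\|g\|_\infty\le1$, $\|h\|_\infty\le1$, and $\big|(P^{k}h)(x)-q(h)\big|=\big|\int h\,d\big(P^{k}(x,\cdot)-q\big)\big|\le\|P^{k}(x,\cdot)-q\|_{1}$ — where $\|\cdot\|_{1}$ is the $f$-norm of \Cref{define:v-norm} with $f\equiv1$ — I get $|P(A\cap B)-P(A)P(B)|\le\int\|P^{k}(x,\cdot)-q\|_{1}\,q(dx)$. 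Taking the supremum over $A,B$ (the supremum over $t$ in the definition of $\alpha_k$ is $t$-independent by stationarity) yields the key estimate
\[
    \alpha_k\;\le\;\int\big\|P^{k}(x,\cdot)-q\big\|_{1}\,q(dx).
\]

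\textbf{Step 3 (apply geometric ergodicity).} By \Cref{define:v-ergodicity} with $V\equiv1$ there exist $r>1$ and a finite function $\kappa(\cdot)$ with $\|P^{k}(x,\cdot)-q\|_{1}\le\kappa(x)\,r^{-k}$, so $\alpha_k\le r^{-k}\int\kappa(x)\,q(dx)$. It remains to verify that $\kappa$ is $q$-integrable. For this I would invoke the standard drift-based route to geometric ergodicity (\Cref{assume:drift}, \Cref{theorem:v-geom}, cf.\ \cite[Ch.~15]{meyntweedie}): one may take $\kappa(x)=R\,V(x)$ for a drift function $V:S\to[1,\infty)$, and integrating $\Delta V(x)\le-\beta V(x)+b\,I_{x\in C}$ against the invariant $q$ (so that $q(\Delta V)=0$) gives $q(V)\le b/\beta<\infty$. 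Hence $\alpha_k\le R\,q(V)\,r^{-k}$, and with $c:=\log r>0$ we conclude $\alpha_k\in\mathrm{O}(e^{-ck})$.

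\textbf{Main obstacle.} The only delicate point is the $q$-integrability of the rate function $\kappa$: the scalar definition of geometric ergodicity gives only pointwise finiteness, and without integrability the integrated bound from Step~2 need not decay geometrically. This is precisely why the argument routes through the $V$-geometric / geometric-drift formulation; the remainder is routine manipulation with the Markov property and Fubini's theorem.
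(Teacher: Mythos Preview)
Your proof is correct but takes a genuinely different route from the paper. The paper's argument is a two-line citation: it invokes \cite[Theorem~2]{jones} to conclude that a geometrically ergodic chain is $\rho$-mixing with $\rho_k\in\mathrm{O}(e^{-ck})$, and then applies the classical inequality $\alpha_k\le\tfrac14\rho_k$ \cite{bradley,jones}. Your approach instead works directly with the $\alpha$-coefficient: you use the Markov property and stationarity to reduce $\alpha_k$ to the $q$-integrated total variation $\int\|P^k(x,\cdot)-q\|_1\,q(dx)$, and then control this via the geometric convergence bound. The delicate point you correctly flag --- that the pointwise rate constant $\kappa(x)$ must be $q$-integrable --- is handled by passing through the drift characterisation of geometric ergodicity (the converse of \Cref{theorem:v-geom}, i.e.\ \cite[Theorem~15.0.1]{meyntweedie}), which furnishes a $V$ with $q(V)<\infty$ and $\kappa(x)\le RV(x)$. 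Your argument is more self-contained in that it avoids the $\rho$-mixing machinery, at the cost of invoking the (nontrivial) equivalence between geometric ergodicity and the Foster--Lyapunov drift condition; the paper's route simply outsources everything to the cited $\rho$-mixing result.
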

\begin{proof}
By \cite[Theorem 2]{jones} it follows that a geometrically ergodic Markov chain is asymptotically uncorrelated with $\rho$-mixing coefficients (see~\cref{def:rho-mix}) that satisfy $\rho_k \in O(e^{-ck})$. Furthermore, it is well known that~\cite{bradley,jones} $\alpha_k\leq \frac{1}{4}\rho_k$, implying $\alpha_k \in O(e^{-ck})$.
\end{proof}

\section{Bounding the KL-divergence between Beta distributions}
The following results will be utilized in the proofs of Propositions~\ref{thm:bd-ns},~\ref{thm:bd2-ns} and~\ref{ex:slm}.
\begin{lemma}~\label{prop:kl-prior-unif}
Let $\theta_0\in(0,1)$. Let, $\rho_n$ be a sequence of Beta distributions with parameters $\alpha_n=n\theta_0$ and $\beta_n=n(1-\theta_0)$. Let $\pi$ denote an uniform distribution, $U(0,1)$. Then, $\mathcal{K}(\rho_n,\pi)<C+\frac{1}{2}\log (n)$, for some constant $C>0$. 
\end{lemma}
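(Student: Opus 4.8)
The plan is to compute $\mathcal{K}(\rho_n,\pi)$ directly using the closed form of the Beta density and the uniform density on $(0,1)$, and then bound each resulting term. Since $\pi$ is the $U(0,1)$ density (identically $1$), we have
\begin{align*}
  \mathcal{K}(\rho_n,\pi) = \int_0^1 \log\big(\rho_n(\theta)\big)\,\rho_n(\theta)\,d\theta = \E_{\rho_n}\!\big[\log \rho_n(\theta)\big],
\end{align*}
which is exactly the negative differential entropy of a $\mathrm{Beta}(\alpha_n,\beta_n)$ random variable. So the first step is simply to write down the standard formula for the (negative) entropy of a Beta distribution:
\begin{align*}
  -h(\rho_n) = \log \frac{1}{\mathrm{Beta}(\alpha_n,\beta_n)} + (\alpha_n-1)\psi(\alpha_n) + (\beta_n-1)\psi(\beta_n) - (\alpha_n+\beta_n-2)\psi(\alpha_n+\beta_n),
\end{align*}
where $\psi$ is the digamma function and $\mathrm{Beta}(\cdot,\cdot)$ the Beta function. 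With $\alpha_n = n\theta_0$, $\beta_n = n(1-\theta_0)$, we have $\alpha_n+\beta_n = n$.

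The second step is asymptotic estimation of each piece as $n\to\infty$. Using $\log\mathrm{Beta}(\alpha_n,\beta_n) = \log\Gamma(\alpha_n)+\log\Gamma(\beta_n)-\log\Gamma(n)$ together with Stirling's approximation $\log\Gamma(z) = z\log z - z - \tfrac12\log z + \tfrac12\log(2\pi) + O(1/z)$, and the digamma asymptotics $\psi(z) = \log z - \tfrac{1}{2z} + O(1/z^2)$, one finds that the leading $z\log z$ and linear terms cancel across the combination, leaving a contribution of order $\log n$ from the collected $-\tfrac12\log z$ Stirling corrections. Concretely, the $-\log\mathrm{Beta}$ term contributes $+\tfrac12\log\Gamma$-corrections $\approx \tfrac12\log\big(\tfrac{n}{2\pi \alpha_n\beta_n}\big) = \tfrac12\log\big(\tfrac{1}{2\pi n \theta_0(1-\theta_0)}\big)$, i.e. $-\tfrac12\log n$ plus a constant; while the digamma terms $(\alpha_n-1)\psi(\alpha_n)+(\beta_n-1)\psi(\beta_n)-(n-2)\psi(n)$ contribute $+\tfrac12\log n$ plus a constant after the leading terms cancel. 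Adding these gives $\mathcal{K}(\rho_n,\pi) = \tfrac12\log n + C_{\theta_0} + o(1)$ for a constant $C_{\theta_0}$ depending only on $\theta_0$, which in particular is bounded above by $C + \tfrac12\log n$ for a suitable constant $C$ and all $n\ge 1$. One must check the edge behavior for small $n$ (the bound is claimed for all $n$, so either absorb finitely many terms into $C$ or note $\alpha_n,\beta_n$ may be less than $1$ and handle $\psi$ of small arguments), but this is routine since only finitely many $n$ are at issue.

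The main obstacle is bookkeeping: ensuring that the $n\log n$ and $n$ terms genuinely cancel in the combination of $\log\mathrm{Beta}$ and digamma terms, so that what survives is only logarithmic. This is where one must be careful with signs and with the $O(1/z)$ error terms in Stirling and in the digamma expansion — collecting them shows they are summable/bounded, contributing only to the constant $C$. A cleaner alternative that avoids Stirling is to bound $\mathcal{K}(\rho_n,\pi)$ by a more elementary argument: write $\log\rho_n(\theta) = \log\Gamma(n) - \log\Gamma(\alpha_n) - \log\Gamma(\beta_n) + (\alpha_n-1)\log\theta + (\beta_n-1)\log(1-\theta)$, take expectations using $\E_{\rho_n}[\log\theta] = \psi(\alpha_n)-\psi(n)$ and $\E_{\rho_n}[\log(1-\theta)] = \psi(\beta_n)-\psi(n)$, and then bound the Gamma-function combination via the log-convexity / Stirling estimate $\log\Gamma(an) \le an\log(an) - an + \tfrac12\log n + C'$. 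Either route works; I would present the entropy-formula version as the cleanest, and flag the cancellation of leading orders as the only subtle point.
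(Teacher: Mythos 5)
Your proposal is correct and follows essentially the same route as the paper: both write $\mathcal{K}(\rho_n,\pi)$ as the negative entropy of the Beta$(\alpha_n,\beta_n)$ distribution, apply Stirling's approximation to $\mathrm{Beta}(\alpha_n,\beta_n)$, and control the digamma terms via $\log x - \tfrac{1}{x} < \psi(x) < \log x - \tfrac{1}{2x}$ so that the leading terms cancel and only $\tfrac12\log n$ plus a constant remains. The only cosmetic difference is that you phrase the digamma control as an asymptotic expansion while the paper uses the explicit two-sided bound, and you additionally flag the small-$n$/small-argument edge case, which the paper handles by assuming $\alpha_n,\beta_n>1$ without loss of generality.
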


\begin{proof}
The KL divergence $\mathcal{K}(\rho_n,\pi)$ can be written as $\int \log \left(\frac{\rho_n}{\pi}\right) \rho_n(d\theta)$. Since $\pi$ is uniform, $\pi(\theta)=1$ whenever $\theta\in (0,1)$. Hence, the KL-divergence can be written as the negative of the entropy of $\rho_n$
$
 \int_0^1 \log \left(\rho_n\left(\theta\right)\right) \rho_n(d\theta),
$
which can be written as 
\begin{align}
\mathcal{K}(\rho_n,\pi) = (\alpha_n-1)\psi(\alpha_n)+(\beta_n-1)\psi(\beta_n)-(\alpha_n+\beta_n-2)\psi(\alpha_n+\beta_n)-\log \text{Beta}(\alpha_n,\beta_n),
\end{align}
where $\psi$ is the digamma function. Using Stirling's approximation on $\text{Beta}(\alpha_n,\beta_n)$ yields,
\begin{align*}
    \text{Beta}(\alpha_n,\beta_n) = \sqrt{2\pi} \frac{\alpha_n^{\alpha_n-1/2}\beta_n^{\beta_n-1/2}}{(\alpha_n+\beta_n)^{\alpha_n+\beta_n-1/2}}(1+o(1)).
\end{align*}
Plugging in the values of $\alpha_n$ and $\beta_n$, we get,
\begin{align*}
     \text{Beta}(\alpha_n,\beta_n) & = \sqrt{2\pi} \frac{(n\theta_0)^{(n\theta_0)-1/2}(n(1-\theta_0))^{(n(1-\theta_0))-1/2}}{(n\theta_0+n(1-\theta_0))^{n\theta_0+n(1-\theta_0)-1/2}}\left(1+o(1)\right)\\
      & = \sqrt{2\pi} \frac{n^{n-1}\theta_0^{n\theta_0-1/2}(1-\theta_0)^{n(1-\theta_0)-1/2}}{n^{n-1/2}}(1+o(1))\\
      & = \sqrt{\frac{2\pi}{n}}\theta_0^{n\theta_0-1/2}(1-\theta_0)^{n(1-\theta_0)-1/2} (1+o(1)).
\end{align*}
Setting $C_1=\log(\sqrt{2\pi})$, we have,
\begin{align*}
    -\log  \text{Beta}(\alpha_n,\beta_n) & <\log(1+o(1))+ C_1+\frac{1}{2}\log (n)-(n\theta_0-1/2)\log(\theta_0)-(n(1-\theta_0)-1/2)\log (1-\theta_0).
\end{align*}
Now, we analyze the term $(\alpha_n-1)\psi(\alpha_n)$. From~\cite{alzer1997some} we have that $\log(x)-\frac{1}{x}<\psi(x)<\log(x)-\frac{1}{2x}\  \forall \enspace x>0$. Without loss of generality, assume $\alpha_n>1$, giving
\begin{align*}
    (\alpha_n-1)\psi(\alpha_n) & <(\alpha_n-1)\log(\alpha_n)-\frac{\alpha_n-1}{2\alpha_n}.
    \intertext{If not, then we would have obtained using the lower bound of $\psi(x)$,}
    (\alpha_n-1)\psi(\alpha_n) & <(\alpha_n-1)\log(\alpha_n)-\frac{\alpha_n-1}{\alpha_n},
    \intertext{and we could have proceeded similarly. By plugging in the value of $\alpha_n$, we get}
     (\alpha_n-1)\psi(\alpha_n) & < (n\theta_0-1)\log(n\theta_0)-\left(1/2-\frac{1}{2n\theta_0}\right)\\
     & = (n\theta_0-1)\log(n)+(n\theta_0-1)\log(\theta_0) -\left(1/2-\frac{1}{2n\theta_0}\right).
\end{align*}
Similarly assuming $\beta_n>1$, we get the following upper bound for $(\beta_n-1)\psi(\beta_n)$, giving
\begin{align*}
    (\beta_n-1)\psi(\beta_n) < (n(1-\theta_0)-1)\log(n)+(n(1-\theta_0)-1)\log(1-\theta_0)-\left(1/2-\frac{1}{2n(1-\theta_0)}\right)
\end{align*}
Combining all the terms, we get, for $C^{(n)}_2=C_1-(1/2-\frac{1}{2n\theta_0})-(1/2-\frac{1}{2n(1-\theta_0)})$
\begin{align*}
    (\alpha_n-1)\psi(\alpha_n) & +(\beta_n-1)\psi(  \beta_n)-(\alpha_n+\beta_n-2)\psi(\alpha_n+\beta_n)-\log \text{Beta}(\alpha_n,\beta_n)  \\
    & < C^{(n)}_2 - \frac{1}{2}(\log(\theta_0)+\log(1-\theta_0))+(n\theta_0-1)\log(n)+(n(1-\theta_0)-1)\log(n)\\
    & \qquad +\frac{1}{2}\log (n)-(\alpha_n+\beta_n-2)\psi(\alpha_n+\beta_n)+\log(1+o(1)).\\
    \intertext{ Now plugging in the values of $\alpha_n$ and $\beta_n$ in $(\alpha_n+\beta_n-1)\psi(\alpha_n+\beta_n)$, we get,}
    (\alpha_n-1)\psi(\alpha_n) & +(\beta_n-1)\psi(  \beta_n)-(\alpha_n+\beta_n-2)\psi(\alpha_n+\beta_n)-\log \text{Beta}(\alpha_n,\beta_n)  \\
    & < C^{(n)}_2-\frac{1}{2}(\log(\theta_0)-\log(1-\theta_0))+(n-2)\log(n)+\frac{1}{2}\log (n)-(n-2)\psi(n)\\&\qquad\qquad\qquad\qquad\qquad\qquad\qquad\qquad\qquad\qquad\qquad +\log(1+o(1)).
\end{align*}
By using the lower bound of $\psi(x)$, we get that, $-\psi(x)<-\log(x)+\frac{1}{x}$. Plugging this into the above equation, we get,
\begin{align*}
    (\alpha_n-1)\psi(\alpha_n) & +(\beta_n-1)\psi(  \beta_n)-(\alpha_n+\beta_n-2)\psi(\alpha_n+\beta_n)-\log \text{Beta}(\alpha_n,\beta_n)  \\
     & < C^{(n)}_2-\frac{1}{2}(\log(\theta_0)-\log(1-\theta_0))+(n-2)\log(n) - (n-2)\log(n)+\frac{1}{2}\log(n)\\ &\qquad\qquad\qquad\qquad\qquad\qquad\qquad\qquad\qquad\qquad\qquad\qquad +\frac{n-2}{n} +\log(1+o(1))\\
     & = C^{(n)}_2-\frac{1}{2}(\log(\theta_0)-\log(1-\theta_0))+\frac{1}{2}\log(n)+\frac{n-2}{n}+\log(1+o(1)).\\
     \intertext{Since $\frac{1}{2n\theta_0}+\frac{1}{2n(1-\theta_0)})<2$, $C^{(2)}_n$ can be upper bounded by $C_1+1$. Using this fact, we get}
    (\alpha_n-1)\psi(\alpha_n) & +(\beta_n-1)\psi(  \beta_n)-(\alpha_n+\beta_n-2)\psi(\alpha_n+\beta_n)-\log \text{Beta}(\alpha_n,\beta_n) < C+\frac{1}{2}\log(n),
\end{align*}
for some large constant $C$.
\end{proof}

\begin{proposition}~\label{lem:kl-prior}
Let $\theta_0\in(0,1)$. Let, $\rho_n$ be a sequence of Beta distributions with parameters $\alpha_n=n\theta_0$ and $\beta_n=n(1-\theta_0)$. Let $\pi$ denote an Beta distribution, with parameters $(\alpha,\beta)$. Then, $\mathcal{K}(\rho_n,\pi)<C+\frac{1}{2}\log(n)$, for some constant $C>0$. 
\end{proposition}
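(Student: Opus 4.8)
The plan is to reduce the statement to \Cref{prop:kl-prior-unif} by decomposing the KL divergence along the uniform density. Writing $u \equiv 1$ for the density of $U(0,1)$, I would split
\[
\mathcal{K}(\rho_n,\pi) = \int \log\frac{\rho_n(\theta)}{u(\theta)}\,\rho_n(d\theta) + \int \log\frac{u(\theta)}{\pi(\theta)}\,\rho_n(d\theta) = \mathcal{K}(\rho_n,U) - \int \log \pi(\theta)\,\rho_n(d\theta),
\]
since $u \equiv 1$ on $(0,1)$ and $\rho_n$ is supported there. The first term is already controlled: \Cref{prop:kl-prior-unif} gives $\mathcal{K}(\rho_n,U) < C_0 + \tfrac12 \log n$. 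So it remains to show the second term is bounded uniformly in $n$.

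For the second term I would use the explicit Beta density, $\log\pi(\theta) = (\alpha-1)\log\theta + (\beta-1)\log(1-\theta) - \log\mathrm{Beta}(\alpha,\beta)$, so that
\[
-\int \log\pi(\theta)\,\rho_n(d\theta) = -(\alpha-1)\,\E_{\rho_n}[\log\theta] - (\beta-1)\,\E_{\rho_n}[\log(1-\theta)] + \log\mathrm{Beta}(\alpha,\beta).
\]
Using the standard moment identities for the Beta distribution, $\E_{\rho_n}[\log\theta] = \psi(\alpha_n) - \psi(\alpha_n+\beta_n) = \psi(n\theta_0) - \psi(n)$ and $\E_{\rho_n}[\log(1-\theta)] = \psi(\beta_n) - \psi(\alpha_n+\beta_n) = \psi(n(1-\theta_0)) - \psi(n)$. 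Invoking the same digamma bounds used in the proof of \Cref{prop:kl-prior-unif}, namely $\log x - \tfrac1x < \psi(x) < \log x - \tfrac{1}{2x}$ for all $x>0$, I would sandwich $\psi(n\theta_0) - \psi(n)$ between $\log\theta_0 - \tfrac{1}{n\theta_0}$ and $\log\theta_0 + \tfrac1n$, and similarly for $\psi(n(1-\theta_0)) - \psi(n)$ around $\log(1-\theta_0)$; for $n\ge 1$ both quantities are thus bounded by constants depending only on $\theta_0$. Since $\alpha,\beta$ are fixed and $\log\mathrm{Beta}(\alpha,\beta)$ is a constant, the entire second term is bounded by some $C_1$ independent of $n$.

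Combining the two pieces yields $\mathcal{K}(\rho_n,\pi) < (C_0 + C_1) + \tfrac12\log n$, which is the claim with $C := C_0 + C_1$. I do not expect a genuine obstacle: the only point requiring care is that the bound on $-\int\log\pi\,d\rho_n$ be uniform in $n$, which follows from the convergence (indeed, uniform boundedness) of the digamma differences; the signs of $\alpha-1$ and $\beta-1$ are immaterial since $\E_{\rho_n}[\log\theta]$ and $\E_{\rho_n}[\log(1-\theta)]$ are bounded on both sides, so only boundedness, not a one-sided estimate, is needed.
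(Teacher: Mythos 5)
Your proposal is correct and follows essentially the same route as the paper: the same decomposition of $\mathcal{K}(\rho_n,\pi)$ through the uniform density, the same appeal to \Cref{prop:kl-prior-unif} for the $\mathcal{K}(\rho_n,U)$ term, and the same digamma sandwich $\log x - \tfrac1x < \psi(x) < \log x - \tfrac{1}{2x}$ to bound $\E_{\rho_n}[\log\theta]$ and $\E_{\rho_n}[\log(1-\theta)]$ uniformly in $n$, with the same observation that two-sided boundedness makes the signs of $\alpha-1$ and $\beta-1$ irrelevant. No gaps.
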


\begin{proof}
The KL-divergence between $\rho_n$ and $\pi$ can be written as,
\begin{align*}
    \mathcal{K}(\rho_n,\pi)= & \int \log\left(\frac{\rho_n}{\pi}\right)\rho_n(d\theta)=\int \log\left(\frac{\rho_n}{U}\right)\rho_n(d\theta)+\int \log\left(\frac{U}{\pi}\right)\rho_n(d\theta),
\end{align*}
where, $U$ is an uniform distribution on $(0,1)$.
We analyze the second term in the above expression. The second term can be written as,
\begin{align*}
    \int \log\left(\frac{U}{\pi}\right)\rho_n(d\theta) & = \int \log \left( \frac{1}{\frac{1}{\text{Beta}(\alpha,\beta)}\theta^{\alpha-1}(1-\theta)^{\beta-1}} \right)\rho_n(d\theta)\\
     & = C_1-(\alpha-1)\int \log(\theta) \rho_n(d\theta)-(\beta-1)\int \log(1-\theta) \rho_n(d\theta),
     \intertext{where $C_1$ is $\log(\text{Beta}(\alpha,\beta))$. Since, $\rho_n$ follows a Beta distribution with parameters $\alpha_n=n\theta_0$ and $\beta_n=n(1-\theta_0)$, we get that,}
     \int \log\left(\frac{U}{\pi}\right)\rho_n(d\theta)& =
     C_1-(\alpha-1) \left[\psi(\alpha_n)-\psi(\alpha_n+\beta_n)\right]-(\beta-1)\left[\psi(\beta_n)-\psi(\alpha_n+\beta_n)\right]
\end{align*}
Since, $\log(x)-\frac{1}{x}<\psi(x)<\log(x)-\frac{1}{2x}$, looking at the term $\left[\psi(\alpha_n)-\psi(\alpha_n+\beta_n)\right]$, we get that, 
\begin{align*}
    \left[\psi(\alpha_n)-\psi(\alpha_n+\beta_n)\right] & = \left[\psi(n\theta_0)-\psi(n\theta_0+n(1-\theta_0))\right]\\
    & = \left[\psi(n\theta_0)-\psi(n)\right].\\
    \intertext{Using the upper bound on $\psi(n\theta_0)$ and the lower bound on $\psi(n)$, we get}
    \left[\psi(n\theta_0)-\psi(n)\right] & < \log(n\theta_0)-\frac{1}{2n\theta_0}-\log(n)+\frac{1}{n\theta_0}\\
    & = \log(\theta_0)+\frac{1}{2n\theta_0}.
    \intertext{We can also get a lower bound very similarly.}
    \left[\psi(\alpha_n)-\psi(\alpha_n+\beta_n)\right] & >\log(\theta_0)-\frac{1}{2n\theta_0}.
\end{align*}
Therefore it follows that
\begin{equation}
    \log(\theta_0)-\frac{1}{2n\theta_0}<\left[\psi(\alpha_n)-\psi(\alpha_n+\beta_n)\right]< \log(\theta_0)+\frac{1}{2n\theta_0}.
\end{equation}
Similarly, we have
\begin{equation}
    \log(1-\theta_0)-\frac{1}{2n(1-\theta_0)}<\left[\psi(\beta_n)-\psi(\alpha_n+\beta_n)\right]< \log(1-\theta_0)+\frac{1}{2n(1-\theta_0)}.
\end{equation}
Consequently, for a large enough constant $C_2>0$ it follows that
\begin{align*}
    -C_2<\left[\psi(\alpha_n)-\psi(\alpha_n+\beta_n)\right]< C_2,
\end{align*}
and
\begin{align*}
    -C_2<\left[\psi(\beta_n)-\psi(\alpha_n+\beta_n)\right]< C_2.
\end{align*}
Without loss of generality assume that $\min\{\alpha-1,\beta-1\}>0$. Then we get,
\begin{align*}
    -(\alpha-1)C_2<(\alpha-1)\left[\psi(\alpha_n)-\psi(\alpha_n+\beta_n)\right]< (\alpha-1)C_2,
\end{align*}
and
\begin{align*}
    -(\alpha-1)C_2<(\alpha-1)\left[\psi(\beta_n)-\psi(\alpha_n+\beta_n)\right]< (\alpha-1)C_2.
\end{align*}
If, either of $\alpha-1$ or $\beta-1$ was negative, then the direction of inequality on the previous equation would have reversed. However, due to the symmetric nature of the inequality, that would not have made a difference. Using the above bounds we finally show that, 
\begin{align*}
    C_1-(\alpha-1) \left[\psi(\alpha_n)-\psi(\alpha_n+\beta_n)\right]-(\beta-1)\left[\psi(\beta_n)-\psi(\alpha_n+\beta_n)\right]<C_1+C_2\theta_0+C_2(1-\theta_0)<C,
\end{align*}
for some large constant C. Finally, we upper bound $\int \log\left(\frac{\rho_n}{U}\right)\rho_n(d\theta)$ by \Cref{prop:kl-prior-unif} thereby completing the proof.
\end{proof}
\section{Proofs of Main Results}

\subsection{\bf Proposition~\ref{prop:1}}~\label{appendix:prop:1}
We start by recalling the variational formula of Donsker and Varadhan~\cite{donsker}.
\begin{lemma}[Donsker-Varadhan]~\label{lem:dv}
For any probability distribution function $\pi$ on $\Theta$, and for any measurable function $h: \Theta \rightarrow \mathbb{R}$, if $\int e^h d\pi <\infty$, then
\begin{align}
    \log\int e^h d\pi & = \underset{\rho\in \mathcal{M}^{+}(\Theta)}{\sup}\left\{\int h d\rho-\mathcal{K}(\rho,\pi) \right\}
\end{align}
\end{lemma}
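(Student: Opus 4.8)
The plan is to prove the identity by establishing the two inequalities separately: first that $\log\int e^h\,d\pi$ dominates the objective $\int h\,d\rho-\mathcal{K}(\rho,\pi)$ for every $\rho\in\mathcal{M}^{+}(\Theta)$, and then that the bound is attained at an explicit ``Gibbs'' (exponentially tilted) measure. For the upper bound, I would first dispose of the singular case: if $\rho$ is not absolutely continuous with respect to $\pi$ then $\mathcal{K}(\rho,\pi)=+\infty$, so such $\rho$ cannot realise the supremum (the finite quantity $\log\int e^h\,d\pi$ already beats it). Hence assume $\rho\ll\pi$ with density $\varphi:=d\rho/d\pi$, and assume $\mathcal{K}(\rho,\pi)<\infty$. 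The key tool is the Fenchel--Young inequality for the exponential, $uv\le e^u+v\log v-v$ for $u\in\mathbb{R}$, $v\ge 0$; applying it pointwise with $u=h$, $v=\varphi$ and integrating against $\pi$ (using $\int\varphi\,d\pi=1$ and $\int\varphi\log\varphi\,d\pi=\mathcal{K}(\rho,\pi)$) yields $\int h\,d\rho\le\int e^h\,d\pi+\mathcal{K}(\rho,\pi)-1<\infty$, which guarantees that the objective is a well-defined element of $[-\infty,\infty)$ on the relevant set of $\rho$'s.

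With well-definedness in hand, on the full-$\rho$-measure set $\{\varphi>0\}$ I would write
\[
    \int h\,d\rho-\mathcal{K}(\rho,\pi)=\int\bigl(h-\log\varphi\bigr)\,d\rho=\int\log\!\left(\frac{e^h}{\varphi}\right)d\rho,
\]
and then apply Jensen's inequality to the concave map $\log$ with respect to the probability measure $\rho$:
\[
    \int\log\!\left(\frac{e^h}{\varphi}\right)d\rho\le\log\int\frac{e^h}{\varphi}\,d\rho=\log\int e^h\,d\pi,
\]
the last equality being just $d\rho=\varphi\,d\pi$. Taking the supremum over $\rho$ gives the ``$\le$'' direction.

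For attainment, set $Z:=\int e^h\,d\pi$, which by hypothesis lies in $(0,\infty)$ (strict positivity is automatic since $e^h>0$ and $\pi$ is a probability measure), and define $\rho^\star$ by $d\rho^\star:=Z^{-1}e^h\,d\pi$. Then $\rho^\star$ is a probability measure, $\rho^\star\ll\pi$ with $d\rho^\star/d\pi=Z^{-1}e^h$, and the Fenchel--Young bound applied to $\rho^\star$ shows $\int|h|\,d\rho^\star<\infty$, so
\[
    \mathcal{K}(\rho^\star,\pi)=\int\log\!\bigl(Z^{-1}e^h\bigr)\,d\rho^\star=\int h\,d\rho^\star-\log Z,
\]
and rearranging gives $\int h\,d\rho^\star-\mathcal{K}(\rho^\star,\pi)=\log Z=\log\int e^h\,d\pi$. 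Hence the supremum is attained and equals $\log\int e^h\,d\pi$, completing the proof. The only genuinely delicate point—and the one I would be most careful about—is the integrability bookkeeping: ensuring that $\int h\,d\rho$ is never an indeterminate $\infty-\infty$ on the set of $\rho$ with finite relative entropy, and that $\int h\,d\rho^\star$ is finite so that the rearrangement in the attainment step is valid. Both are handled uniformly by the single inequality $uv\le e^u+v\log v-v$; everything else reduces to Jensen's inequality.
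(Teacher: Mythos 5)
The paper offers no proof of this lemma to compare against --- it is stated and cited to Donsker--Varadhan --- and your overall strategy (Fenchel--Young plus Jensen for the upper bound, the exponentially tilted measure for the lower bound) is the standard one; the upper-bound half is correct as written (the only slip there is cosmetic: $\int\frac{e^h}{\varphi}\,d\rho=\int_{\{\varphi>0\}}e^h\,d\pi\le\int e^h\,d\pi$, so the last step is an inequality, which is all you need). The genuine gap is in the attainment step. The assertion that ``the Fenchel--Young bound applied to $\rho^\star$ shows $\int|h|\,d\rho^\star<\infty$'' is unjustified and false in general: $\int h^+\,d\rho^\star=Z^{-1}\int h^+e^h\,d\pi$, and $\int e^h\,d\pi<\infty$ does not imply $\int h^+e^h\,d\pi<\infty$. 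For instance, take $\pi$ to be normalized Lebesgue measure on $(0,1/2)$ and $e^{h(x)}=x^{-1}(\log x)^{-2}$; then $\int e^h\,d\pi<\infty$ while $\int h^+e^h\,d\pi=+\infty$. In such cases $\int h\,d\rho^\star=+\infty$ and $\mathcal{K}(\rho^\star,\pi)=+\infty$, the objective at $\rho^\star$ is the indeterminate $\infty-\infty$, and the supremum is not attained at $\rho^\star$ (indeed it need not be attained at all), so the rearrangement $\int h\,d\rho^\star-\mathcal{K}(\rho^\star,\pi)=\log Z$ breaks down. Note that Fenchel--Young applied with $v=Z^{-1}e^h$ collapses to a tautology; it yields finiteness of $\int h\,d\rho$ only for $\rho$ already known to have finite relative entropy, which is precisely what is in question for $\rho^\star$.

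The lemma is nevertheless true in the stated generality, and the repair is a short truncation argument in place of exact attainment: for $M>0$ set $h_M:=\min(h,M)$ and $d\rho_M:=e^{h_M}\,d\pi/\int e^{h_M}\,d\pi$. Since $h_M$ is bounded above, $\mathcal{K}(\rho_M,\pi)=\int h_M\,d\rho_M-\log\int e^{h_M}\,d\pi$ is finite, and $\int h\,d\rho_M\ge\int h_M\,d\rho_M$, so the objective at $\rho_M$ is at least $\log\int e^{h_M}\,d\pi$; letting $M\to\infty$ and invoking monotone convergence gives the ``$\ge$'' direction, which together with your upper bound proves the equality (with attainment at your $\rho^\star$ exactly when $\int h^+e^h\,d\pi<\infty$, e.g.\ whenever $h$ is bounded above).
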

%\end{customlemma}

\begin{proof}[Proof of Prop.~\ref{prop:1}]
%The proof is a straightforward application of Chernoff's inequality to the joint distribution $P_{\theta_0}^{(n)}$. 
Fix $\alpha\in (0,1)$, and $\theta\in\Theta$. First, observe that by the definition of the $\alpha$-R\'enyi divergence we have
\begin{align*}
     & \E^{(n)}_{\theta_0} [\exp (-\alpha \loglik)] = \exp[-(1-\alpha)\malpharen]\\
\intertext{Multiplying both sides of the equation by $\exp[(1-\alpha)\malpharen$ and integrating with respect to (w.r.t.) $\pi(\theta)$ it follows that}
     & \int \E^{(n)}_{\theta_0} \left[\exp \left(-\alpha \loglik+(1-\alpha)\malpharen \right) \right]\pi(d\theta)  = 1,
    \text{or}\\\qquad
     &  \E^{(n)}_{\theta_0}\left[\int \exp \left(-\alpha \loglik+(1-\alpha)\malpharen \right)\pi(d\theta) \right] = 1.
\end{align*}
Define $h(\theta) := -\alpha \loglik+(1-\alpha)\malpharen$. Then, applying \Cref{lem:dv} to the integrand on the left hand side (l.h.s.) above, it follows that
\begin{align*}
    \E^{(n)}_{\theta_0}\left[\exp\left( \underset{\rho\in\mathcal{M}^{+}(\Theta)}{\sup} \left[ \int h(\theta) \rho(d\theta)-\mathcal{K}(\rho,\pi) \right] \right)\right] = 1.
\end{align*}
Multiply both sides of this equation by $\epsilon > 0$ to obtain
\begin{align*}
 \E^{(n)}_{\theta_0}\left[\exp\left( \underset{\rho\in\mathcal{M}^{+}(\Theta)}{\sup} \left[ \int h(\theta) \rho(d\theta)-\mathcal{K}(\rho,\pi) + \log(\epsilon)\right] \right)\right] = \epsilon.
\end{align*}
Now, by Markov's inequality, we have
\begin{equation}
     {P^{(n)}_{\theta_0}}\bigg[ \underset{\rho\in\mathcal{M}^{+}(\Theta)}{\sup} \int(-\alpha \loglik+(1-\alpha)\malpharen)\rho(d\theta)-\mathcal{K}(\rho,\pi) + \log(\epsilon) \geq 0\bigg]\leq\epsilon.
\end{equation}
Thus, it follows via complementation that 
\begin{align*}
   {P^{(n)}_{\theta_0}}\bigg[ \forall \rho\in\mathcal{F}(\Theta) \int\malpharen\rho(d\theta)\leq\frac{\alpha}{(1-\alpha)} \int\loglik\rho(d\theta)+\frac{\mathcal{K}(\rho,\pi) - \log(\epsilon)}{1-\alpha} \bigg] & \geq 1-\epsilon,
\end{align*}
thereby completing the proof.
\end{proof}

%\textbf{Proof of Theorem 1: }
\subsection{\bf \Cref{thm:pac-bayes}:}
\begin{proof}[Proof of Theorem~\ref{thm:pac-bayes}]
Recall the definition of the fractional posterior and the VB approximation,
\begin{align*}
      \pi_{n,\alpha|X^n} =\frac{\exp^{-\alpha r_n (\theta,\theta_0)(X^n)}\pi(d\theta)}{\int \exp^{-\alpha r_n (\gamma,\theta_0)(X^n)}\pi(d\gamma)},~
      \Tilde{\pi}_{n,\alpha|X^n} =\underset{\mathcal{\rho \in F}}{\arg \min}~ \mathcal{K}(\rho,\pi_{n,\alpha|X^{(n)}}).
\end{align*}

It follows by definition of the KL divergence that
\begin{align}
    \Tilde{\pi}_{n,\alpha|X^n} & =\underset{\mathcal{\rho \in F}}{\arg \min}\left\{ -\alpha \int \loglik \rho(d\theta) +\mathcal{K}(\rho,\pi) \right\},
\end{align}
where $\pi$ is the prior distribution. 
Following~\Cref{prop:1} it follows that for any $\epsilon > 0$ 
\begin{equation*} 
\int\malpharen {\Tilde{\pi}(d\theta|X^n)}\leq\frac{\alpha}{(1-\alpha)} \int\loglik\rho(d\theta)+\frac{\mathcal{K}(\rho,\pi) - \log(\epsilon)}{1-\alpha},
\end{equation*}
with probability $1-\epsilon$. 
%The expectation and variance in the term above is with respect to the joint distribution of $\data$. 
We fix an $\eta \in (0,1)$.  
Using Chebychev's inequality, we have
\begin{align*}
  & P_{\theta_0}^{(n)}\left[ \frac{\alpha}{1-\alpha} \int \loglik\rho_n(d\theta) \geq  \frac{\alpha}{1-\alpha}\int \E[\loglik]\rho_n(d\theta)+\frac{\alpha}{1-\alpha} 
    \sqrt{ \frac{\Var[\int \loglik \rho_n(d\theta)]}{\eta} } +\frac{\mathcal{K}(\rho_n,\pi)}{1-\alpha} \right] \\
 &= P_{\theta_0}^{(n)}\left[ \frac{\alpha}{1-\alpha} \int \loglik\rho_n(d\theta) -  \frac{\alpha}{1-\alpha} \int \E[\loglik]\rho_n(d\theta) - \frac{\mathcal{K}(\rho_n,\pi)}{1-\alpha}  \geq 
   \frac{\alpha}{1-\alpha}
\sqrt{ \frac{\Var[\int \loglik \rho_n(d\theta)]}{\eta} }\right]\\
  & \leq \frac{\Var\left[ \frac{\alpha}{1-\alpha} \int \loglik\rho_n(d\theta) -  \frac{\alpha}{1-\alpha}\int \E[\loglik]\rho_n(d\theta) - 
 \frac{\mathcal{K}(\rho_n,\pi)}{1-\alpha}    \right]}{\frac{\alpha^2}{(1-\alpha)^2}
 \frac{\Var[\int \loglik \rho_n(d\theta)]}{\eta} }. \\
 \end{align*}
 Note that $\frac{\alpha}{1-\alpha}\int E(\loglik)\rho_n(d\theta)$ and $\frac{\mathcal{K}(\rho_n,\pi)}{1-\alpha}$ are constants with respect to the data, implying
 \begin{align*}
    \Var\bigg[ \frac{\alpha}{1-\alpha} &\int \loglik\rho_n(d\theta) -  \frac{\alpha}{1-\alpha}\int \E[\loglik]\rho_n(d\theta) - \frac{\mathcal{K}(\rho_n,\pi)}{1-\alpha}\bigg]
    \\& \qquad= \frac{\alpha^2}{(1-\alpha)^2}
 {\Var\left[\int \loglik \rho_n(d\theta)\right]}.
\end{align*}
Therefore, we have
\begin{align*}
    P_{\theta_0}^{(n)}\bigg[ \frac{\alpha}{1-\alpha} \int \loglik\rho_n(d\theta) \geq  \frac{\alpha}{1-\alpha}&\int \E[\loglik]\rho_n(d\theta)\\& +\frac{\alpha}{1-\alpha} 
    \sqrt{ \frac{\Var[\int \loglik \rho_n(d\theta)]}{\eta} } +\frac{\mathcal{K}(\rho_n,\pi)}{1-\alpha} \bigg] \leq \eta.
\end{align*}
From \Cref{prop:1}, with probability $1-\epsilon$ the following holds 
\begin{equation*}
     \int \malpharen \Tilde{\pi}_{n,\alpha|X^n}(d\theta) \leq \frac{\alpha\int \loglik \rho_n(d\theta)+\mathcal{K}(\rho_n,\pi)-\log(\epsilon)}{1-\alpha}.
\end{equation*}
Therefore, with probability $1-\eta-\epsilon$ the following statement holds
\begin{align}~\label{eq:proof-step-miss}
    \int \malpharen \Tilde{\pi}_{n,\alpha|X^n}(d\theta) &\leq \frac{\alpha}{1-\alpha}\int \kld  \rho_n(d\theta)+\frac{\alpha}{1-\alpha} \sqrt{ \frac{\text{Var}[\int \loglik \rho_n(d\theta)]}{\eta} }\nonumber\\ & \qquad \qquad +\frac{\mathcal{K}(\rho_n,\pi)-\log(\epsilon)}{1-\alpha}.
\end{align}
Next, observe that
\begin{align*}
    \text{Var}\left[\int \loglik \rho_n(d\theta)\right] = E_{\theta_0}^{(n)} \left[ \left| \int \loglik \rho_n(d\theta) - E\left[ \int \loglik \rho_n(d\theta)\right] \right|^2 \right]\leq \int \text{Var}[\loglik ] \rho_n(d\theta),
\end{align*}
by a straightforward application of Jensen's inequality to the inner integral on the left hand side. Finally, following the hypotheses~(i),~(ii) and~(iii), we have,
\begin{align*}
    \int \malpharen \Tilde{\pi}_{n,\alpha|X^n}(d\theta) &\leq  \frac{\alpha}{1-\alpha}\int \bigg( \kld + \sqrt{ \frac{\int\Var[ \loglik ]\rho_n(d\theta)}{\eta} }\bigg)\rho_n(d\theta)\\  & \qquad  +\frac{1}{\alpha}\left(\mathcal{K}(\rho_n,\pi)-\log(\epsilon)\right)\\
    & \leq  \frac{\alpha(\epsilon_n+\sqrt{\frac{n\epsilon_n}{\eta}})}{1-\alpha}+\frac{n\epsilon_n-\log(\epsilon)}{1-\alpha}, 
\end{align*}
thereby concluding the proof.
\end{proof} 
\textbf{Lemma 2 :}
\subsection{\Cref{prop:1a}}
\begin{proof}[Proof of Proposition~\ref{prop:1a}]
We define $Y_i :=  \log\left(\frac{p_{\theta_1}(X_i|X_{i-1})}{p_{\theta_2}(X_i|X_{i-1})} \right)$ for $i = 1,\ldots,n$, and $Z_0=\log \bigg(\frac{q^{(0)}_{1}(X_0)}{q^{(0)}_2(X_0)}\bigg)$. Then, using the Markov property we can see that the Kullback-Leibler divergence between the joint distributions $P_{\theta_1}^{(n)}$ and $P_{\theta_2}^{(n)}$ satisfies
%\begin{align}
\(
  \mathcal{K}\left(P_{\theta_1}^{(n)},P_{\theta_2}^{(n)}\right) = \sum_{i=1}^n \E_{\theta_1} \left[ Y_i \right]+\E_{\theta_1}[Z_0].
\)
%\end{align}
If the Markov chain $\{X_i\}$ is stationary under $\theta_1$, so is $\{Y_i\}$. Hence $Y_i\overset{d}{=}Y_1$ and the above equation reduces to, 
\begin{align}
    \mathcal{K}\left(P_{\theta_1}^{(n)},P_{\theta_2}^{(n)}\right) = n\E_{\theta_1} \left[ Y_1 \right]+\E_{\theta_1}[Z_0].
\end{align}
\end{proof}

\subsection{\Cref{prop:2}}
First, recall the following result from \cite{ibrag}.
\begin{lemma}{\cite[Lemma 1.2]{ibrag}}~\label{lem:alpha-mix}
Let $X_{-\infty},\dots,X_1,X_2,\dots$ be an alpha mixing Markov chain with alpha mixing coefficients given by $\alpha_k$. 
Let $\mathcal{M}_{a}^{b}$ be the sigma-field generated by the subsequence $(X_a,X_{a+1},\dots,X_b)$. Let $\eta_t\in \lowersigma$ and $\tau_t\in \uppersigma$ be adapted random variables such that $|\eta_t|\leq1,|\tau_t|\leq1$. Then,
\begin{align}
\underset{t}{\sup} \underset{\eta_t ,\tau_t}{\sup}|\E[\eta_t\tau_t]-\E[\eta_t] \E[\tau_t]|\leq 4\alpha_k.
\end{align}
\end{lemma}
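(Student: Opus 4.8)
The plan is to derive this covariance bound for bounded variables from the definitional estimate for indicator events, namely that $|\E[I_A I_B]-\E[I_A]\E[I_B]| = |P(A\cap B)-P(A)P(B)| \le \alpha_k$ whenever $A \in \lowersigma$ and $B \in \uppersigma$, which is immediate from the definition of the $\alpha$-mixing coefficient. Fix $t>0$ and a pair $\eta_t,\tau_t$ with $\eta_t$ measurable with respect to $\lowersigma$, $\tau_t$ measurable with respect to $\uppersigma$, and $|\eta_t|\le 1$, $|\tau_t|\le 1$. Since the bound I obtain will depend on neither $t$ nor the particular pair, it will pass to the double supremum, so it suffices to bound $\E[\eta_t\tau_t]-\E[\eta_t]\E[\tau_t]=\Cov(\eta_t,\tau_t)$.

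First I would reduce to nonnegative integrands. Decompose $\eta_t=\eta_t^+-\eta_t^-$ and $\tau_t=\tau_t^+-\tau_t^-$ into positive and negative parts; each part is measurable with respect to the same $\sigma$-field as the original variable, is nonnegative, and is bounded by $1$. By bilinearity of the covariance,
\[
\Cov(\eta_t,\tau_t)=\Cov(\eta_t^+,\tau_t^+)-\Cov(\eta_t^+,\tau_t^-)-\Cov(\eta_t^-,\tau_t^+)+\Cov(\eta_t^-,\tau_t^-),
\]
so it suffices to bound each of the four covariances of nonnegative variables bounded by $1$ by $\alpha_k$; these four terms are precisely what produces the constant $4$.

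Next, for nonnegative $g$ measurable with respect to $\lowersigma$ and $h$ measurable with respect to $\uppersigma$, both bounded by $1$, I would use the layer-cake (Fubini) representation $g=\int_0^1 I_{\{g>s\}}\,ds$ and $h=\int_0^1 I_{\{h>u\}}\,du$, noting that $\{g>s\}\in\lowersigma$ and $\{h>u\}\in\uppersigma$ for every $s,u$. Substituting and interchanging expectation with the integrals (legitimate since the integrands are bounded) gives
\[
\Cov(g,h)=\int_0^1\int_0^1\left(P(\{g>s\}\cap\{h>u\})-P(g>s)\,P(h>u)\right)ds\,du .
\]
By the definition of the $\alpha$-mixing coefficient each integrand is at most $\alpha_k$ in absolute value, whence $|\Cov(g,h)|\le\int_0^1\int_0^1\alpha_k\,ds\,du=\alpha_k$. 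Applying this to each of the four covariances above yields $|\Cov(\eta_t,\tau_t)|\le 4\alpha_k$, and taking the supremum over $t$ and over all admissible pairs gives the claim.

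The argument is essentially bookkeeping once the two reductions are in place, so I do not expect a serious obstacle; the one point requiring care is the constant, which is \emph{not} $1$. The definitional bound is sharp on indicator events, but the passage to signed variables bounded by $1$ unavoidably loses a factor of $4$ through the positive/negative-part decomposition, and one should verify that this loss is genuine rather than an artifact. Beyond that, the only items to confirm are the measurability of $\eta_t^\pm,\tau_t^\pm$ and of the level sets $\{g>s\},\{h>u\}$ in the appropriate $\sigma$-fields, and the applicability of Fubini, all of which are immediate under the boundedness assumptions.
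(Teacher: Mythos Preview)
Your proof is correct and follows the standard route for this classical inequality. Note, however, that the paper does not actually prove this lemma: it is stated with a citation to \cite[Lemma 1.2]{ibrag} and used as a black box in the proof of Lemma~\ref{lem:corr-bound}. So there is no ``paper's own proof'' to compare against; your argument supplies what the paper simply imports from the literature, and the layer-cake plus positive/negative-part decomposition you give is exactly the textbook derivation one would expect to find in the cited source.
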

This lemma provides an upper bound on the covariance of events $\eta \text{ and } \tau$, as shown next.

\begin{lemma}~\label{lem:corr-bound}
Let $\eta\in \lowersigma$ $\tau\in \uppersigma$ be such that, $E|\eta|^{2+\delta}\leq C_1,E|\tau|^{2+\delta}\leq C_2 \text{ for some } \delta>0$. Then, for a fixed $n < +\infty$, we have
\begin{equation}
 |\E\eta\tau-\E\eta \E\tau| \leq
    \left(\frac{4}{n} +
     2n^{\delta/2}(C_1+C_2)+
     2n^{\delta/2}\sqrt{C_1C_2}\right)\alpha_k^{2\delta/(2+\delta)}.
     \end{equation}
\end{lemma}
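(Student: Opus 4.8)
The plan is to reduce the general moment bound in \Cref{lem:corr-bound} to the bounded case handled by \Cref{lem:alpha-mix} via a truncation argument. First I would fix a truncation level; a natural choice given the target exponents is to split $\eta = \eta \mathbf 1_{\{|\eta| \le n^{1/2}\}} + \eta \mathbf 1_{\{|\eta| > n^{1/2}\}} =: \eta' + \eta''$, and similarly $\tau = \tau' + \tau''$. The truncated variables satisfy $|\eta'| \le n^{1/2}$ and $|\tau'| \le n^{1/2}$, so $\eta'/n^{1/2}$ and $\tau'/n^{1/2}$ are bounded by $1$ and lie in $\lowersigma$ and $\uppersigma$ respectively (measurability is inherited from $\eta,\tau$). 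Applying \Cref{lem:alpha-mix} to this normalized pair and multiplying back through by $n$ gives $|\E[\eta'\tau'] - \E[\eta']\E[\tau']| \le 4 n \alpha_k$ — wait, this is not quite the right normalization; I would instead note the lemma gives $|\E[\eta'\tau'] - \E[\eta']\E[\tau']| \le 4 n^{1/2}\cdot n^{1/2}\alpha_k$ only after rescaling, so I must be more careful and absorb the scaling to land on the $\tfrac{4}{n}\alpha_k^{2\delta/(2+\delta)}$ term; the cleanest route is to normalize by a larger constant so the bounded-case contribution is small, and I will choose the truncation threshold to balance the two error sources.

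Concretely, the bilinear expansion gives
\begin{align*}
 \E[\eta\tau] - \E[\eta]\E[\tau] &= \big(\E[\eta'\tau'] - \E[\eta']\E[\tau']\big) + \big(\E[\eta'\tau''] - \E[\eta']\E[\tau'']\big)\\
 &\quad + \big(\E[\eta''\tau'] - \E[\eta'']\E[\tau']\big) + \big(\E[\eta''\tau''] - \E[\eta'']\E[\tau'']\big),
\end{align*}
so it suffices to bound each of the four terms. The first is controlled by \Cref{lem:alpha-mix} after the rescaling described above. For the remaining three, which all involve at least one tail term $\eta''$ or $\tau''$, I would use Hölder's inequality together with Markov's inequality: $\E|\eta''| = \E[|\eta|\mathbf 1_{\{|\eta|>n^{1/2}\}}] \le (\E|\eta|^{2+\delta})^{1/(2+\delta)} \cdot P(|\eta|>n^{1/2})^{(1+\delta)/(2+\delta)}$, and $P(|\eta|>n^{1/2}) \le \E|\eta|^{2+\delta}/n^{(2+\delta)/2}$, yielding bounds of order $C_1^{?} n^{-?}$; similarly $\E|\eta''\tau''| \le (\E|\eta|^{2+\delta})^{1/(2+\delta)}(\E|\tau|^{2+\delta})^{1/(2+\delta)}(\cdots)$ via Hölder with three exponents. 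Cross terms like $\E|\eta'\tau''|$ are handled by $|\eta'| \le n^{1/2}$ and the tail estimate on $\tau''$. Assembling these and bounding $n^{-\beta} \le \alpha_k^{2\delta/(2+\delta)}$ is not literally true — rather the $\alpha_k$ powers enter only through the first term, and the remaining terms must be absorbed into the $\tfrac{4}{n}$, $2n^{\delta/2}(C_1+C_2)$, $2n^{\delta/2}\sqrt{C_1C_2}$ coefficients, so I would carefully track which power of $n$ each tail term contributes and match it against the claimed coefficients (noting $n^{\delta/2}$ appears precisely because the Hölder exponent on the tail probability is $(1+\delta)/(2+\delta)$).

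The main obstacle will be the bookkeeping of exponents: getting the truncation level (here $n^{1/2}$) to simultaneously (a) keep the bounded-case error term proportional to $\alpha_k^{2\delta/(2+\delta)}$ with the stated constant $4$ up to the $1/n$ factor, and (b) make every tail term scale no worse than $n^{\delta/2}(C_1 + C_2 + \sqrt{C_1C_2})$. The appearance of the exponent $2\delta/(2+\delta)$ on $\alpha_k$ (rather than the $\delta/(2+\delta)$ that appears later in \Cref{prop:2}) suggests that a further application of the covariance-splitting identity or an interpolation between the trivial bound $|\E\eta\tau - \E\eta\E\tau| \le 2\sqrt{C_1C_2}^{\,?}$ and the $\alpha_k$-bound may be needed; I would keep an eye out for whether the stated bound follows directly from truncation or requires one additional Hölder interpolation step to promote $\alpha_k$ to the $2\delta/(2+\delta)$ power. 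Once the exponent accounting is pinned down, the inequality follows by collecting terms, and I would not belabor the arithmetic.
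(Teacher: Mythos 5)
Your decomposition into the four truncated covariance terms is exactly the paper's, and your tail estimates (Markov/H\"older on $\eta''=\eta\mathbf 1_{\{|\eta|>N\}}$) are in the right spirit, but the proof has a genuine gap: the truncation threshold cannot be the fixed level $n^{1/2}$. With that choice the bounded-part term from \Cref{lem:alpha-mix} is $4n\,\alpha_k$ (after rescaling $\eta'/n^{1/2},\tau'/n^{1/2}$), not $\tfrac{4}{n}\alpha_k^{\delta/(2+\delta)}$, and --- more fatally --- the three tail terms come out of order $n^{-\delta/2}$ times $C_1,C_2,\sqrt{C_1C_2}$ with \emph{no} factor of $\alpha_k$ at all. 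Since every term of the claimed bound is multiplied by a positive power of $\alpha_k$, and $\alpha_k$ may be arbitrarily small (geometrically decaying in the applications), an $\alpha_k$-free remainder can never be ``absorbed into the coefficients'' as you suggest; the inequality you would obtain is strictly weaker than the lemma and insufficient for \Cref{prop:2}. The missing idea is that the threshold must depend on $\alpha_k$: the paper first proves, for arbitrary $N$, the bound $4N^2\alpha_k+\tfrac{2}{N^{\delta}}(C_1+C_2)+\tfrac{2}{N^{\delta}}\sqrt{C_1C_2}$ (first term by dividing by $N^2$ and applying \Cref{lem:alpha-mix}; tail terms by the elementary bound $1\le |\eta|^{1+\delta}/N^{1+\delta}$ on $\{|\eta|\ge N\}$ and Cauchy--Schwarz for the doubly truncated term), and only then optimizes by setting $N=n^{-1/2}\alpha_k^{-1/(2+\delta)}$. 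That choice is what makes the common factor $\alpha_k^{\delta/(2+\delta)}$ appear in front of all three pieces, with $4N^2\alpha_k=\tfrac{4}{n}\alpha_k^{\delta/(2+\delta)}$ and $\tfrac{2}{N^{\delta}}=2n^{\delta/2}\alpha_k^{\delta/(2+\delta)}$.

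On the exponent puzzle you flag: no additional interpolation is needed. The paper's argument yields $\alpha_k^{\delta/(2+\delta)}$, which is what is actually used in \Cref{prop:2}; the exponent $2\delta/(2+\delta)$ in the statement of \Cref{lem:corr-bound} appears to be a typo (it would be a strictly stronger claim, since $\alpha_k\le 1$), so you should aim for $\delta/(2+\delta)$ rather than look for an extra H\"older step to promote the power.
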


\begin{proof}
     Let $N < +\infty$ be a fixed number. We get from the triangle inequality that
    \begin{align}
    |\E\eta\tau-\E\eta \E\tau| & \leq |\E\eta\tau I_{[|\eta| \leq N,|\tau|\leq N]}-\E\eta I_{[|\eta|\leq N]} \E\tau I_{[|\tau|\leq N]}| \\\nonumber
     &\qquad +|\E\eta\tau I_{[|\eta| \geq N,|\tau|\leq N]}-\E\eta I_{[|\eta|\geq N]} \E\tau I_{[|\tau|\leq N]}| \\\nonumber
     &\qquad +|\E\eta\tau I_{[|\eta| \leq N,|\tau|\geq N]}-\E\eta I_{[|\eta|\leq N]} \E\tau I_{[|\tau|\geq N]}|\\\nonumber
     &\qquad +|\E\eta\tau I_{[|\eta| \geq N,|\tau|\geq N]}-\E\eta I_{[|\eta|\geq N]} \E\tau I_{[|\tau|\geq N]}|.
     \intertext{Multiplying and dividing the first term by $N^2$ and applying \Cref{lem:alpha-mix}, we get $|\E\eta\tau I_{[|\eta| \leq N,|\tau|\leq N]}-\E\eta I_{[|\eta|\leq N]} \E\tau I_{[|\tau|\leq N]}|\leq 4N^2 \alpha_k$. 
     For the second term, if $|\tau|\leq N$, then $\tau\leq N$ and $\tau\geq -N$. Plugging this in the second term we get,}
     |\E\eta\tau I_{[|\eta| \geq N,|\tau|\leq N]} - & \E\eta I_{[|\eta|\geq N]} \E\tau I_{[|\tau|\leq N]}|  \leq 
     \left|N\E\eta  I_{[|\eta| \geq N}+N\left[\E\eta I_{[|\eta|\geq N]} \right]\right| \\
     & = 2N |\E\eta I_{[|\eta|\geq N]} |.
     \intertext{Since $|\eta|\geq N$,  we have $1\leq \frac{|\eta|^{1+\delta}}{N^{1+\delta}}$. Following this,}
     |2N\E\eta I_{[|\eta|\geq N]} | & \leq 2N\left|\E\left[ \frac{|\eta|^{2+\delta}}{N^{1+\delta}} I_{[|\eta|\geq N]}\right]\right| \\
     & \leq 2N\frac{1}{N^{1+\delta}}|\E \eta^{2+\delta}|
     \leq 2\frac{C_1}{N^{\delta}}.
     \end{align}
     Similarly, we can also write for the third term, $|\E\eta\tau I_{[|\eta| \leq N,|\tau|\geq N]}-\E\eta I_{[|\eta|\leq N]} \E\tau I_{[|\tau|\geq N]}|\leq 2\frac{C_2}{N^{\delta}}$. Finally, for the last term we get that by Cauchy-Schwarz inequality,
     \begin{align}
        |\E\eta\tau I_{[|\eta| \geq N,|\tau|\geq N]}-\E\eta I_{[|\eta|\geq N]} \E\tau I_{[|\tau|\geq N]}| & \leq \sqrt{\Var\left[\eta I_{[|\eta| \geq N]} \right]\Var\left[\tau I_{[|\tau|\geq N]} \right]}\\
        & < 2\sqrt{\Var\left[\eta I_{[|\eta| \geq N]} \right]\Var\left[\tau I_{[|\tau|\geq N]} \right]}\\
        & \leq 2 \sqrt{\E\left[\eta^2 I_{[|\eta| \geq N]} \right]\E\left[\tau^2 I_{[|\tau|\geq N]} \right]}.
     \end{align}
     Since $|\eta|>N$, $1<\frac{|\eta|^{\delta}}{N^{\delta}}$. Similarly, $1<\frac{|\tau|^{\delta}}{N^{\delta}}$. Plugging these in the previous equation, we get,
     \begin{align}
         \sqrt{\E\left[\eta^2 I_{[|\eta| \geq N]} \right]\E\left[\tau^2 I_{[|\tau|\geq N]} \right]}& \leq \sqrt{\frac{1}{N^{2\delta}}\E\left[|\eta|^{2+\delta} I_{[|\eta| \geq N]} \right]\E\left[|\tau|^{2+\delta} I_{[|\tau|\geq N]} \right] }\\
         & \leq \frac{1}{N^{\delta}} \sqrt{C_1C_2}.
     \end{align}
     Combining the four upper bounds above, we get,
     \begin{align}
     |\E\eta\tau-\E\eta \E\tau| & \leq 4N^2\alpha_k+\frac{2}{N^{\delta}}(C_1+C_2)+\frac{2}{N^{\delta}}\sqrt{C_1 C_2}.
     \end{align}
Now, in particular, setting $N=n^{-1/2}\alpha_k^{-1/(2+\delta)}$ it follows that
    \begin{align}
     |\E\eta\tau-\E\eta \E\tau| & \leq
     \frac{4}{n}\alpha_k^{\delta/(2+\delta)}+2n^{\delta/2}\alpha_k^{\delta/(2+\delta)}(C_1+C_2)+2n^{\delta/2}\alpha_k^{\delta/(2+\delta)}\sqrt{C_1C_2}\\ 
     & = \bigg(\frac{4}{n} +
     2n^{\delta/2}(C_1+C_2)+2n^{\delta/2}\sqrt{C_1C_2}\bigg)\alpha_k^{\delta/(2+\delta)}.
    \end{align}
\end{proof}

%\begin{customlemma}{4}
\begin{lemma}\label{lem:corr-bound2}
Let $\{X_t\}$ be an alpha mixing Markov chain with mixing coefficient $\alpha_k$. Further assume that $\E|X_t|^{2+\delta}\leq C_1 \text{ and } \E|X_{t+k}|^{2+\delta}\leq C_2$ for some $\delta>0$.
Then, for any $t$ and any $n>0$
\begin{equation}\label{ibr}
    |\Cov(X_t,X_{t+k})| \leq 
    \bigg(\frac{4}{n} +
     2n^{\delta/2}(C_1+C_2)+
     2n^{\delta/2}\sqrt{C_1C_2}\bigg)\alpha_k^{\delta/(2+\delta)}.
\end{equation}
\end{lemma}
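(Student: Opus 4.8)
The plan is to obtain Lemma~\ref{lem:corr-bound2} as an immediate specialization of Lemma~\ref{lem:corr-bound}. The key observation is that for a Markov chain $\{X_t\}$ the single coordinate $X_t$ is $\lowersigma$-measurable (since $t \leq t$) and $X_{t+k}$ is $\uppersigma$-measurable (since $t+k \leq t+k$). Hence I would set $\eta := X_t$ and $\tau := X_{t+k}$ in Lemma~\ref{lem:corr-bound}, taking the constants there to be exactly the $(2+\delta)$-moment bounds $\E|X_t|^{2+\delta} \leq C_1$ and $\E|X_{t+k}|^{2+\delta} \leq C_2$ imposed in the hypothesis. Note that Lemma~\ref{lem:corr-bound} requires only these moment bounds, not any almost-sure boundedness, so no truncation issue arises at this stage beyond what is already handled inside its proof.

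Next I would observe that $\Cov(X_t,X_{t+k}) = \E[X_t X_{t+k}] - \E[X_t]\,\E[X_{t+k}] = \E\eta\tau - \E\eta\,\E\tau$, so the left-hand side of \cref{ibr} is precisely the quantity bounded in Lemma~\ref{lem:corr-bound}. Invoking that lemma (with the truncation level chosen as $N = n^{-1/2}\alpha_k^{-1/(2+\delta)}$, as in its proof) then gives
\begin{equation*}
    |\Cov(X_t,X_{t+k})| \leq \left(\frac{4}{n} + 2n^{\delta/2}(C_1+C_2) + 2n^{\delta/2}\sqrt{C_1 C_2}\right)\alpha_k^{\delta/(2+\delta)},
\end{equation*}
which is exactly \cref{ibr}. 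Since the parameter $n > 0$ enters only through the free choice of the truncation level $N$, the inequality holds for every $n > 0$ and every $t$, as claimed.

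There is essentially no genuine obstacle here: all the analytic work is contained in Lemma~\ref{lem:corr-bound}, and the only things to check are the measurability of $X_t$ and $X_{t+k}$ with respect to the past and future $\sigma$-fields (immediate) and the matching of the moment hypotheses (stated identically). The one point I would be careful about is to invoke the exponent $\delta/(2+\delta)$ that is actually established in the proof of Lemma~\ref{lem:corr-bound}, which is the exponent needed for \cref{ibr}, rather than any competing exponent; once that is done the argument is a single substitution.
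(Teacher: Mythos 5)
Your proposal is correct and matches the paper's own proof, which is exactly the one-line substitution $\eta = X_t$, $\tau = X_{t+k}$ in \Cref{lem:corr-bound}. Your side remark about using the exponent $\alpha_k^{\delta/(2+\delta)}$ actually derived in the proof of \Cref{lem:corr-bound} (rather than the $\alpha_k^{2\delta/(2+\delta)}$ appearing in its statement) is also the right reading, since that is the exponent needed in \cref{ibr}.
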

%\end{customlemma}
\begin{proof}
Set $\eta=X_t, \tau=X_{t+k}$ in \Cref{lem:corr-bound}.
\end{proof}
%\begin{customlemma}{4}
\begin{proof}[Proof of \Cref{prop:2}]
 Let $\{X_t\}$ be a stationary alpha-mixing Markov chain under $\theta_1$ with mixing coefficients $\{\alpha_k\}$. Observe that the log-likelihood can be expressed as
\begin{align*}
   r_n(\theta_2,\theta_1) & =\underset{i=1}{\sum^{n}} \log \bigg( \frac{ p_{\theta_1}(X_i|X_{i-1}) } { p_{\theta_2}(X_i|X_{i-1}) } \bigg) + \log \bigg(\frac{q^{(0)}_{1}(X_0)}{q^{(0)}_2(X_0)}\bigg)\\
     & \equiv \underset{i=1}{\sum^n} Y_i+Z_0.
\end{align*}
Therefore, the variance of the log-likelihood ratio is simply
\begin{align*}
  \Var_{\theta_1} \left[ r_n(\theta_2,\theta_1) \right] & =\Var_{\theta_1}\left[\underset{i=1}{\sum^n} Y_i+Z_0\right]\\
     & =\underset{i,j=1}{\sum^n} \Cov_{\theta_1}(Y_i,Y_j) + \underset{i=1}{\sum^n} \Cov_{\theta_1}(Y_i,Z_0) + \Cov_{\theta_1}(Z_0,Z_0).
\end{align*}
Now, using \Cref{lem:corr-bound2} we have
\begin{align*}
    |\Cov_{\theta_1}(Y_i,Y_j)| & =|\E_{\theta_1} Y_iY_j-\E_{\theta_1} Y_i\E_{\theta_1} Y_j|\\
     & < \left(\frac{4}{n}+2n^{\delta/2}(\E_{\theta_1}|Y_i|^{2+\delta}+\E_{\theta_1}|Y_j|^{2+\delta}+\sqrt{\E_{\theta_1}|Y_i|^{2+\delta}\E_{\theta_1}|Y_j|^{2+\delta}})\right)\alpha_{|j-i|-1}^{\delta/(2+\delta)}\\
     & = \left(\frac{4}{n}+2n^{\delta/2}(C_{\theta_1,\theta_2}^{(i)}+C_{\theta_1,\theta_2}^{(j)}+\sqrt{C_{\theta_1,\theta_2}^{(i)}C_{\theta_1,\theta_2}^{(j)}})\right)\alpha_{|j-i|-1}^{\delta/(2+\delta)}.\\
     \intertext{Similarly, as above we can also say}
    |\Cov_{\theta_1}(Y_i,Z_0)|< & \left(\frac{4}{n}+2n^{\delta/2}(C_{\theta_1,\theta_2}^{(i)}+D_{1,2}+\sqrt{C_{\theta_1,\theta_2}^{(i)}D_{1,2}})\right)\left(\alpha_{i-1}^{\delta/(2+\delta)}\right)
\end{align*}
Combining, the two upper bounds above, we get the first result:
\begin{align*}
    \Var_{\theta_1} \bigg[ r_n(\theta_2,\theta_1) \bigg] & < \sum_{i,j=1}^n\left(\frac{4}{n}+2n^{\delta/2}(C_{\theta_1,\theta_2}^{(i)}+C_{\theta_1,\theta_2}^{(j)}+2\sqrt{C_{\theta_1,\theta_2}^{(i)}C_{\theta_1,\theta_2}^{(j)}})\right)  \left(\alpha_{|i-j|-1}^{\delta/(2+\delta)}\right)\\
    \nonumber
     &\qquad+ \sum_{i=1}^n\left(\frac{4}{n^2}+2n^{\delta/2}(C_{\theta_1,\theta_2}^{(i)}+D_{1,2}+\sqrt{C_{\theta_1,\theta_2}^{(i)}D_{1,2}})\right)\left(\alpha_{i-1}^{\delta/(2+\delta)}\right) + \Var[Z_0,Z_0].
\end{align*}
If $\{X_i\}$ is stationary under $\theta_1$, so is $\{Y_i\}$. Therefore, $\E_{\theta_1} |Y_i|^{2+\delta}= \E_{\theta_1} |Y_1|^{2+\delta}=C_{\theta_1,\theta_2}^{(1)}\enspace\forall\enspace i$, and
\begin{align}
\nonumber
    \underset{i,j=1}{\sum^n} \Cov_{\theta_1} (Y_i,Y_j) & \leq\underset{i,j=1}{\sum^n} \left(\frac{4}{n}+6n^{\delta/2}C_{\theta_1,\theta_2}^{(1)}\right)\alpha_{|j-i|-1}^{\delta/(2+\delta)}\\
    \label{eq:cross-bound}
     & \leq n\,\left(\frac{4}{n}+6n^{\delta/2}C_{\theta_1,\theta_2}^{(1)} \right)\left(\underset{h\geq 1}{\sum} \alpha_{h-1}^{\delta/(2+\delta)}\right).
\end{align}
Again, using \Cref{lem:corr-bound2} on $\Cov_{\theta_1}(Y_i,Z_0)$, yields
\begin{equation}
\label{eq:init-bound}
\underset{i=1}{\sum^n} \Cov_{\theta_1}(Y_i,Z_0) \leq \left(\frac{4}{n}+2n^{\delta/2}(C_{\theta}+D_{1,2}+\sqrt{C_{\theta}D_{1,2}})\right)\left(\sum_{h\geq 1} \alpha_{h}^{\delta/(2+\delta)}\right).
\end{equation}
Finally, using ~\cref{eq:cross-bound} and~\cref{eq:init-bound} we have
\begin{align*}
    \Var_{\theta_1} \left[ r_n(\theta_2,\theta_1) \right] & \leq n \left(\frac{4}{n}+6n^{\delta/2}C_{\theta_1,\theta_2}^{(1)}\right) \left(\sum_{h\geq 1}\alpha_{h-1}^{\delta/(2+\delta)}\right) + \\
    &\left(\frac{4}{n}+2n^{\delta/2}(C_{\theta_1,\theta_2}^{(1)}+D_{1,2}+\sqrt{C_{\theta_1,\theta_2}^{(1)}D_{1,2}}) \right) \left(\sum_{h\geq 1} \alpha_{h}^{\delta/(2+\delta)}\right) + \Cov_{\theta_1} (Z_0,Z_0).
\end{align*}
\end{proof}
% %
\begin{lemma}\label{lem:paired-meas}
{Let $\{X_t\}$ be an alpha-mixing Markov Chain with mixing coefficients $\{\alpha_t\}$. Then the process $\{Y_t\}$ where $Y_t := \log \left( \frac{ p_{\theta_0}(X_t|X_{t-1}) } { p_{\theta}(X_t|X_{t-1}) } \right)$} is also alpha-mixing with mixing coefficients $\{\tilde \alpha_t\} $ where $\tilde \alpha_t = \alpha_{t-1}$.
\end{lemma}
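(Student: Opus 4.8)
\textbf{Proof proposal for Lemma~\ref{lem:paired-meas}.}
The plan is to exploit the single fact that each $Y_t$ is a (deterministic, measurable) function of the pair $(X_{t-1},X_t)$, and then track how this shifts indices when passing between the $\sigma$-fields generated by $\{X_t\}$ and those generated by $\{Y_t\}$. Write $\mathcal{M}_i^j$ for the $\sigma$-field generated by $\{X_k : i \le k \le j\}$ (as in the definition of the $\alpha$-mixing coefficient) and $\mathcal{N}_i^j$ for the $\sigma$-field generated by $\{Y_k : i \le k \le j\}$. Since $Y_t = \log\!\big(p_{\theta_0}(X_t|X_{t-1})/p_\theta(X_t|X_{t-1})\big)$ is $\sigma(X_{t-1},X_t)$-measurable, we have, for any $i \le j$, the inclusion $\mathcal{N}_i^j \subseteq \mathcal{M}_{i-1}^j$; in particular $\mathcal{N}_{-\infty}^t \subseteq \mathcal{M}_{-\infty}^t$ and $\mathcal{N}_{t+j}^\infty \subseteq \mathcal{M}_{t+j-1}^\infty$.

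Next I would plug these inclusions into the definition of $\tilde\alpha_j$. For any $t>0$ and any events $A \in \mathcal{N}_{-\infty}^t$, $B \in \mathcal{N}_{t+j}^\infty$, the inclusions above give $A \in \mathcal{M}_{-\infty}^t$ and $B \in \mathcal{M}_{t+(j-1)}^\infty$, so that $|P_\theta(A\cap B) - P_\theta(A)P_\theta(B)|$ is one of the quantities appearing in the supremum defining $\alpha_{j-1}$. Taking suprema over $A,B$ and over $t$ yields
\[
    \tilde\alpha_j \;=\; \sup_{t>0}\ \sup_{(A,B)\in \mathcal{N}_{-\infty}^t\times \mathcal{N}_{t+j}^\infty} \big|P_\theta(A\cap B)-P_\theta(A)P_\theta(B)\big| \;\le\; \alpha_{j-1},
\]
which is the assertion of the lemma (with the understanding, as elsewhere in the paper, that it is the upper bound $\tilde\alpha_j \le \alpha_{j-1}$ that is used).

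There is essentially no analytic obstacle here; the only point requiring care is the index bookkeeping. Because $Y_t$ looks one step into the past of the $X$-chain, the "future" $\sigma$-field $\mathcal{N}_{t+j}^\infty$ of the $Y$-process is contained not in $\mathcal{M}_{t+j}^\infty$ but in the larger $\mathcal{M}_{t+j-1}^\infty$, and this is exactly what produces the lag shift $\tilde\alpha_j \le \alpha_{j-1}$ rather than $\tilde\alpha_j \le \alpha_j$. I would state the $\sigma$-field inclusion explicitly as the one nontrivial step and note that everything else is a direct comparison of suprema over nested index sets, using that $P_\theta$ restricted to $\mathcal{N}$-events agrees with $P_\theta$ on the corresponding $\mathcal{M}$-events.
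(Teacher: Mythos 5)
Your argument is correct, and it is in essence the same index-shifting idea the paper uses, but your route is cleaner. The paper first forms the paired chain $Z_i=(X_i,X_{i-1})$, identifies its $\sigma$-fields with those of $\{X_t\}$ through an explicit correspondence of cylinder sets (the map $T_i^j$), concludes that the paired chain has mixing coefficients exactly $\alpha_{k-1}$, and only then observes that $Y_t$ is a function of $Z_t$. Your proof collapses all of this into the single inclusion $\mathcal{N}_i^j \subseteq \mathcal{M}_{i-1}^j$ and a comparison of suprema, which avoids the paper's most delicate step: the claim that an arbitrary event in $\mathcal{M}_{i-1}^j$ can be written as a product set $C_{i-1}\times\cdots\times C_j$ is not literally true (only a generating class has that form), so the paper's bijection argument needs to be read as an argument about generated $\sigma$-fields, which is exactly what your inclusion makes precise. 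The one discrepancy is that the lemma is stated as an equality $\tilde\alpha_t = \alpha_{t-1}$, while your argument (correctly) only delivers $\tilde\alpha_t \le \alpha_{t-1}$; since $\{Y_t\}$ could in principle be less dependent than the paired chain, the inequality is in fact all that can be asserted for $\{Y_t\}$, and it is also all that is used downstream (in Proposition~\ref{prop:2} and the variance bounds), so your version is the right statement to prove. You might just flag explicitly that you are proving the bound rather than the stated equality.
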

\begin{proof}
By $Z_i$ denote the paired random measure $(X_i,X_{i-1})$. Let $\mathcal{M}_i^j$ denote the sigma field generated by the measures $X_k, \text{ where } i\leq k\leq j$. By $\mathcal{G}_i^j$ denote the sigma field generated by the measures $Z_k, \text{ where } i\leq k\leq j$. Let $C\in \mathcal{M}_{i-1}^j$. Then, $C$ can be expressed as $(C_{i-1}\times C_i\times \dots \times C_j)$. for $C_{i-1}\in \mathcal{M}_{i-1}^{i-1}\text{, } C_i \in \mathcal{M}_i^i \dots$ and so on. Now, consider a map. $T_i^j : (C_{i-1}\times C_i\times \dots \times C_j)\xrightarrow{} (C_{i-1}\times C_i\times C_i \times \dots \times C_{j-1} \times C_{j-1} \times C_j)$. Note that, $T(C)\in \mathcal{G}_i^j$. It is easy to see that $\mathcal{G}_i^j = T_i^j(\mathcal{M}_{i-1}^j)\cup \mathcal{M}_{i-1}^{*j}$, where $T_i^j(\mathcal{M}_{i-1}^j)$ is obtained by applying the map $T_i^j$ to each element of $\mathcal{M}_{i-1}^j$. If we assume this latter set to be the range and $\mathcal{M}_{i-1}^j$ to be the domain, then, by construction, $T_i^j$ is a bijection. Also, the two classes are made of disjoint sets, i.e. if $A\in T_i^j(\mathcal{M}_{i-1}^j) \text{ and } A^*\in \mathcal{M}_{i-1}^{*j}$, then $A\cap A^*=\phi$. Also, note that $\mathcal{M}_{i-1}^{j*}$ is made of impossible sets. i.e. $P(A^*)=0 \enspace \forall\enspace A^* \in \mathcal{M}_{i-1}^{j*}$. Now consider the alpha mixing coefficients for $Z_i$. By definition, it is given by
\begin{align*}
  \alpha^z_k & = \underset{i}{\sup}\underset{A\in\mathcal{G}_{-\infty}^{i} ,B\in\mathcal{G}_{i+k}^{\infty}}{\sup}|P(A\cap B)-P(A)P(B)|\\
   & = \underset{i}{\sup}\underset{A\in\mathcal{G}_{-\infty}^{i} ,B\in\mathcal{G}_{i+k}^{\infty}}{\sup}|P((A^o\cup A^*)\cap (B^o\cup B^*))-P((A^o\cup A^*))P((B^o\cup B^*))|.
\end{align*}
Where,
\begin{center}
\begin{tabular}{l l}
$A=(A^o\cup A^*)$ & $B=(B^o\cup B^*)$ \\ 
$A^o \in \mathcal T_{-\infty}^i({M}_{-\infty}^{i})$ & $A^* \in \mathcal{M}_{-\infty}^{*i}$ \\  
$B^o \in T_{i+k-1}^{\infty}(\mathcal{M}_{j+k-1}^{\infty})$ & $B^* \in \mathcal{M}_{j+k-1}^{*\infty}.$    
\end{tabular}
\end{center}
Then, the expression for the alpha mixing coefficient can be reduced into
\begin{align*}
    \alpha^z_k & = \underset{i}{\sup}\underset{A^o \in T_{-\infty}^{i}(\mathcal{M}_{-\infty}^{i}) ,B^o\in T_{i+k-1}^{\infty}(\mathcal{M}_{i+k-1}^{\infty})}{\sup}|P(A^o\cap B^o)-P(A^o)P(B^o)|.
\end{align*}
Note that, by bijection property of $T_i^j$, we can find $A' \in \mathcal{M}_{-\infty}^{i}$ and $B' \in \mathcal{M}_{i+k-1}^{\infty}$ such that
\begin{align*}
     \alpha^z_k & = \underset{i}{\sup}\underset{A' \in \mathcal{M}_{-\infty}^{i} ,B'\in \mathcal{M}_{i+k-1}^{\infty}}{\sup}|P(T_{-\infty}^{i}(A')\cap T_{i+k-1}^{\infty}(B'))-P(T_{-\infty}^{i}(A'))P(T_{i+k-1}^{\infty}(B'))|.\\
      & = \alpha_{k-1}.
\end{align*}
Now, $\log \left( \frac{ p_{\theta_0}(X_n|X_{n-1}) } { p_{\theta}(X_n|X_{n-1}) } \right)$ is just a function of the paired Markov chain $Z_i$, therefore it has alpha-mixing coefficient $\alpha_{k-1}$.
\end{proof}
\subsection{Proof of \Cref{thm:lip-gen} : }
\begin{proof} 
\textit{Part 1: Verifying condition (i) of \Cref{thm:pac-bayes2}.}

% %Recall that the invariant probability distribution under parameterization $\theta \in \Theta$ is denoted $q_{\theta}$. Define $Y_i :=  \left[\frac{p_{\theta_0}(X_i|X_{i-1})}{p_{\theta}(X_i|X_{i-1})} \right]$ for $i = 1,\ldots,n$, and $Z_0= \log\left[\frac{q_{0}(X_0)}{q_{\theta}(X_0)}\right]$. Then, using the Markov property we can see that the Kullback-Leibler divergence between the joint distributions $P_{\theta_0}^{(n)}$ and $P_{\theta}^{(n)}$ satisfies
We substitute the true parameter $\theta_0$  for $\theta_1$ and $\theta$ for $\theta_2$. 
We also set $q_{1}^{(0)}$ to be the invariant distribution of the Markov chain under $\theta_0$, $q_{0}$, and $q_{2}^{(0)}$ as the invariant distribution of the Markov chain under $\theta$, $q_{\theta}$. Applying the fact that these Markov chains are stationary to \Cref{prop:1a}, we have
\begin{align}
  \kld %& = E\left[ \log \prod_{i=1}^n Y_i \right]+\E[Z_0],\nonumber\\
   & = n \E \left[ \log\left( \frac{p_{\theta_0}(X_{1}|X_{0})}{p_{\theta}(X_{1}|X_{0})} \right) \right]+\E[Z_0],\nonumber\\
   & \leq n\sum_{j=1}^m \E\left[ M^{(1)}_j(X_1,X_{0}) \right] |f^{(1)}_j(\theta,\theta_0)|+\sum_{k=1}^m{\E [M^{(2)}_k(X_0)]|f^{(2)}_k(\theta,\theta_0)|},\label{eq:kld-bound-1}
\end{align}
where the inequality follows from Assumption~\ref{assume:gen-lip}. Therefore, it follows that
\begin{align*}
     \int \kld \rho_n(d\theta) & \leq n\sum_{j=1}^m \E\left[M^{(1)}_j(X_1,X_0)\right] \int |f^{(1)}_j(\theta,\theta_0)|\rho_n(d\theta)+\sum_{k=1}^m\E [M^{(2)}_k(X_0)]| \int f^{(2)}_k(\theta,\theta_0)|  \rho_n(d\theta).\\
     \intertext{By \cref{assume:gen-lip(0)} in \Cref{{assume:gen-lip}}, it follows that}
    \int \kld \rho_n(d\theta) & \leq n\sum_{j=1}^m \E\left[M^{(1)}_j(X_1,X_0)\right]\frac{C}{\sqrt{n}}+\sum_{k=1}^m\E [M^{(2)}_k(X_0)]\frac{C}{\sqrt{n}} \leq n \epsilon_n^{(1)},
\end{align*}
where $\epsilon_n^{(1)} \in O\left( \frac{1}{\sqrt{n}} \right)$.\\
\textit{Part 2: Verifying condition (ii) of \Cref{thm:pac-bayes2}.}
Again, using \Cref{prop:2} along with the fact that the Markov chain is stationary we have
\begin{align*}
    \Var[\loglik] & \leq  n\left(\frac{4}{n}+6n^{\delta/2}C_{\theta_0,\theta}^{(1)}\right)  \left(\sum_{k\geq 0}\alpha_{k}^{\delta/(2+\delta)}\right)\\
    \nonumber
     &\qquad+ \left(\frac{4}{n^2}+2n^{\delta/2}(C_{\theta_0,\theta}^{(1)}+D_{\theta_0,\theta}+\sqrt{C_{\theta_0,\theta}^{(1)}D_{\theta_0,\theta}})\right)\left(\sum_{k\geq 1} \alpha_k^{\delta/(2+\delta)}\right) + \Var[Z_0].
     \intertext{It then follows that}
      \int \Var[\loglik] \rho_n (d\theta) & \leq  n\left(\frac{4}{n}+6n^{\delta/2}\int C_{\theta_0,\theta}^{(1)}\rho_n (d\theta)\right)\left(\sum_{k\geq 1}\alpha_{k-1}^{\delta/(2+\delta)}\right) + \int \Var[Z_0]\rho_n (d\theta) + \\
     \nonumber
     &\hspace{-.7in} \left(\frac{4}{n^2}+2n^{\delta/2}(\int C_{\theta_0,\theta}^{(1)}\rho_n (d\theta)+\int D_{\theta_0,\theta}\rho_n (d\theta)+\int \sqrt{C_{\theta_0,\theta}^{(1)}D_{\theta_0,\theta}}\rho_n (d\theta))\right)
     %\nonumber&\qquad\qquad\qquad\qquad\qquad\qquad\qquad\qquad\qquad
     \left(\sum_{k\geq 1} \alpha_k^{\delta/(2+\delta)}\right).
\end{align*}
First, consider the term $\int C_{\theta_0,\theta}^{(1)}\rho_n (\theta)$, and observe that
\begin{align*}
    \int C_{\theta_0,\theta}^{(1)}\rho_n (d\theta) & = \int \E \log \left|\frac{p_{\theta_0}(X_1|X_0)}{p_{\theta}(X_1|X_0)}\right|^{2+\delta} \rho_n(d\theta).
    \intertext{By \Cref{assume:gen-lip}, we have}
    \int \E \log \left|\frac{p_{\theta_0}(X_1|X_0)}{p_{\theta}(X_1|X_0)}\right|^{2+\delta} \rho_n(d\theta) & \leq \int \E \left[\sum_{j=1}^m M^{(1)}_j(X_1,X_0)|f^{(1)}_k(\theta,\theta_0)|\right]^{2+\delta} \rho_n(d\theta).\\
    \intertext{Since the function $x \mapsto x^{2+\delta}$ is convex, we can apply Jensen's inequality to obtain,}
    \left(\sum_{j=1}^m M^{(1)}_j(X_1,X_0)|f^{(1)}_k(\theta,\theta_0)|\right)^{2+\delta} & \leq m^{1+\delta}\sum_{k=1}^m M^{(1)}_j(X_1,X_0)^{2+\delta}|f^{(1)}_k(\theta,\theta_0)|^{2+\delta}.
    \intertext{Therefore, it follows that}
    \int \E \log \left|\frac{p_{\theta_0}(X_1|X_0)}{p_{\theta}(X_1|X_0)}\right|^{2+\delta} \rho_n(d\theta) & \leq m^{1+\delta}\sum_{k=1}^m \E  [M^{(1)}_k(X_1,X_0)^{2+\delta}]\int |f^{(1)}_k(\theta,\theta_0)|^{2+\delta}\rho_n(d\theta).
    \intertext{By \Cref{assume:gen-lip}, $\int |f_k(\theta,\theta_0)|^{2+\delta}\rho_n(d\theta)<\frac{C}{n}$ and $\E [ M^{(1)}_k(X_1,X_0)^{2+\delta}]<B$, implying that}
     \int C_{\theta_0,\theta}^{(1)}\rho_n (d\theta)  & \leq m^{1+\delta}\sum_{k=1}^m B\frac{C}{n} = m^{2+\delta}\frac{BC}{n}.
\end{align*}
Since $\left(\sum_{k\geq 0}\alpha_{k}^{\delta/(2+\delta)}\right)<\infty$, %there exists a large enough constant $L$ such that
%\begin{align*}
%    \left(\frac{4}{n}+6n^{\delta/2}\int C_{\theta_0,\theta}^{(1)}\rho_n (d\theta)\right)  \left(\sum_{k\geq 1}\alpha_{k-1}^{\delta/(2+\delta)}\right) & \leq \left(\frac{4}{n}+n^{\delta/2}\frac{L}{n} \right).
%\end{align*}
%\text{We can thus say that, $
it follows that
    $\left(\frac{4}{n}+6n^{\delta/2}\int C_{\theta_0,\theta}^{(1)}\rho_n(d\theta)\right)  \left(\sum_{k\geq 1}\alpha_{k-1}^{\delta/(2+\delta)}\right)\in \mathrm{O}(\frac{n^{\delta/2}}{n})$.
\sloppy Similarly, we can show that %for a large enough constant L, 
$\int D_{\theta_0,\theta}\rho_n (d\theta) \in O(\frac{1}{n})$, and $\int \Var[Z_0] \rho_n(d\theta) \in O(\frac{1}{n})$.

For the final term $\int \sqrt{C_{\theta_0,\theta}^{(1)}D_{\theta_0,\theta}}\rho_n (d\theta)$, use the Cauchy-Schwarz inequality to obtain the upper bound $\left(\int {C_{\theta_0,\theta}^{(1)}\rho_n (d\theta) \int D_{\theta_0,\theta}}\rho_n (d\theta)\right)^{1/2}$ which is also of order $O(\frac{1}{n})$.
%, in turn, can be bounded above by $\frac{L}{n}$. Finally, 
Combining all of these together we have
\begin{align*}
    \int \Var[\loglik] \rho_n (d\theta) \leq n \epsilon^{(2)}_n, 
\end{align*}
for some $\epsilon^{(2)}_n\in \mathrm{O}(\frac{n^{\delta/2}}{n})$.
{
}
Since
\(
    \mathcal{K}(\rho_n,\pi) <\sqrt{n}C = n \frac{C}{\sqrt{n}},
\)
it follows that
\(
    \mathcal{K}(\rho_n,\pi) <n \epsilon_n^{(3)},
\)
where $\epsilon_n^{(3)} = \mathrm{O}(1/\sqrt{n})$ as before. Finally, by choosing $\epsilon_n=\max(\epsilon_n^{(1)},\epsilon_n^{(2)},\epsilon_n^{(3)})$, our theorem is proved.
\end{proof}

\subsection{Proof of \Cref{thm:bound-func}}
\begin{proof} 
\textit{Verifying condition (i) of \Cref{thm:pac-bayes2}:}
As in the proof of \Cref{thm:lip-gen} substitute the true parameter $\theta_0$  for $\theta_1$ and $\theta$ for $\theta_2$ in . We also set $q_{1}^{(0)}$ and $q_{2}^{(0)}$ to the distribution $q^{(0)}$. Applying \Cref{prop:1a} to the corresponding transition kernels and initial distribution we have,
\begin{align}
  \kld & = \sum_{i=1}^n \E\left[ \log \left( \frac{p_{\theta_0}(X_i|X_{i-1})}{p_{\theta}(X_i|X_{i-1})}\right) \right]+ \E\left[\log\left(\frac{D(X_0)}{D(X_0)}\right)\right]\\
  \nonumber
  &=\sum_{i=1}^n \E \left[ \log \left( \frac{p_{\theta_0}(X_i|X_{i-1})}{p_{\theta}(X_i|X_{i-1})}\right) \right].
\end{align}
Now, applying Assumption~\ref{assume:gen-lip}, we can bound the previous equation as follows,
\begin{align}~\label{equation:kld-upper-bound-ns}
    \kld & \leq \sum_{i=1}^n \E \left[ \sum_{k=1}^{m} M^{(1)}_k(X_i,X_{i-1})|f^{(1)}_k(\theta,\theta_0)| \right]\nonumber\\
    & = \sum_{i=1}^n  \sum_{k=1}^{m} \E\left[ M^{(1)}_k(X_i,X_{i-1}) \right]|f^{(1)}_k(\theta,\theta_0)|.
\end{align}
Since $M^{(1)}_k$'s are bounded there exists a constant $Q$ so that,
\begin{align*}
    \int \kld \rho_n(d\theta) & \leq Q\int \sum_{i=1}^n  \sum_{k=1}^{m}  |f^{(1)}_k(\theta,\theta_0)|\rho_n(d\theta)\\
    & = Q n\sum_{k=1}^{m} \int |f^{(1)}_k(\theta,\theta_0)|\rho_n(d\theta).
    \intertext{By~\cref{assume:gen-lip(2)} in \Cref{assume:gen-lip}, it follows that}
    \int \kld \rho_n(d\theta) & \leq Q n\sum_{k=1}^{m} \frac{C}{\sqrt{n}}
     = nmQ\frac{C}{\sqrt{n}}
     = n\epsilon^{(1)}_n,
\end{align*}
for some $\epsilon^{(1)}_n\in \mathrm{O}(\frac{1}{\sqrt{n}})$.

\textit{Verifying condition (ii) of \Cref{thm:pac-bayes2}:}
As in the previous part, $Z_0=0$, implying that $D_{\theta,\theta_0}$. Applying \Cref{prop:2} and integrating with respect to $\rho_n$, we obtain 
\begin{align}
    % \Var \left[ \loglik \right] & \leq \sum_{i,j=1}^n\left(\frac{4}{n}+2n^{\delta/2}(C_{\theta_0,\theta}^{(i)}+C_{\theta_0,\theta}^{(j)}+\sqrt{C_{\theta_0,\theta}^{(i)}C_{\theta_0,\theta}^{(j)}})\right)  \left(\alpha_{|i-j|-1}^{\delta/(2+\delta)}\right)\\\nonumber
    %  &\qquad+ \sum_{i=1}^n\left(\frac{4}{n}+2n^{\delta/2}C_{\theta_0,\theta}^{(i)}\right)\left(\alpha_{i-1}^{\delta/(2+\delta)}\right).
    %  \intertext{Integrating both sides by $\rho_n(d\theta)$, we get,}
    \int \Var\left[ \loglik \right] \rho_n(d\theta)
     &\leq \sum_{i=1}^n\left(\frac{4}{n}+2n^{\delta/2}\int C_{\theta_0,\theta}^{(i)}\rho_n(d\theta)\right)\left(\alpha_{i-1}^{\delta/(2+\delta)}\right) \nonumber \\
         & \hspace{-.5in}+ \sum_{i,j=1}^n\left(\frac{4}{n}+2n^{\delta/2}(\int C_{\theta_0,\theta}^{(i)}\rho_n(d\theta)+\int C_{\theta_0,\theta}^{(j)}\rho_n(d\theta)+\int \sqrt{C_{\theta_0,\theta}^{(i)}C_{\theta_0,\theta}^{(j)}}\rho_n(d\theta))\right)  \left(\alpha_{|i-j|-1}^{\delta/(2+\delta)}\right). 
\end{align}
First, consider the term $\int C_{\theta_0,\theta}^{(i)}\rho_n(d\theta)$. Using \Cref{assume:gen-lip}, we can upper bound $C_{\theta_0,\theta}^{(i)}$ as,
\begin{align*}
    C_{\theta_0,\theta}^{(i)} & \leq \E\left[\sum_{k=1}^{m} M^{(1)}_k(X_i,X_{i-1})|f^{(1)}_k(\theta,\theta_0)|\right]^{2+\delta}\\
       & \leq \sum_{k=1}^{m} m^{1+\delta} \E\left[\left( M^{(1)}_k(X_i,X_{i-1})|f^{(1)}_k(\theta,\theta_0)|\right)^{2+\delta}\right] \text{(by Jensen's inequality)}\\
    & =\sum_{k=1}^{m} m^{1+\delta} \E\left[ M^{(1)}_k(X_i,X_{i-1})^{2+\delta}\right]|f^{(1)}_k(\theta,\theta_0)|^{2+\delta}.
    \intertext{Since $M^{(1)}_k$'s are upper bounded by $Q$, it follows that, $C_{\theta_0,\theta}^{(i)}  \leq \sum_{k=1}^{m} m^{1+\delta} Q^{2+\delta}|f^{(1)}_k(\theta,\theta_0)|^{2+\delta}$.}
    %C_{\theta_0,\theta}^{(i)} & \leq \sum_{k=1}^{m} m^{1+\delta} Q^{2+\delta}|f^{(1)}_k(\theta,\theta_0)|^{2+\delta}.
     %= m Q^{2+\delta}|f^{(1)}_k(\theta,\theta_0)|^{2+\delta}.
    \intertext{Hence, from \Cref{assume:gen-lip}, we get,}
    \int C_{\theta_0,\theta}^{(i)} \rho_n(d\theta) & \leq  \sum_{k=1}^{m} m^{1+\delta} Q^{2+\delta} \int |f^{(1)}_k(\theta,\theta_0)|^{2+\delta}\rho_n(d\theta)
   \leq  (mQ)^{2+\delta} \frac{C}{n}.
\end{align*}
Using the upper bound above, we can say for an $L$ large enough, $\int C_{\theta_0,\theta}^{(i)} \rho_n(d\theta)  \leq \frac{L}{n}$. 
\sloppy
Next, by the Cauchy-Schwarz inequality, we have that $\int \sqrt{C_{\theta_0,\theta}^{(i)}C_{\theta_0,\theta}^{(j)}\rho_n(d\theta))}<\sqrt{\int C_{\theta_0,\theta}^{(i)}\rho_n(d\theta)\int C_{\theta_0,\theta}^{(j)}\rho_n(d\theta))}\leq \frac{L}{n}$. Thus, we have the following upper bound.
\begin{align*}
    \int \Var\left[ \loglik \right] \rho_n(d\theta) & \leq
    \sum_{i=1}^n\left(\frac{4}{n}+2n^{\delta/2}\frac{L}{n}\right)\left(\alpha_{i-1}^{\delta/(2+\delta)}\right) + 
    \sum_{i,j=1}^n\left(\frac{4}{n}+2n^{\delta/2}(\frac{L}{n}+\frac{L}{n}+\frac{L}{n})\right)  \left(\alpha_{|i-j|-1}^{\delta/(2+\delta)}\right)\\\nonumber
          & =  \left(\frac{4}{n}+2n^{\delta/2}\frac{L}{n}\right)\left(\sum_{i=1}^n\alpha_{i-1}^{\delta/(2+\delta)}\right) +
          \left(\frac{4}{n}+6n^{\delta/2}\frac{L}{n}\right)\left(\sum_{i,j=1}^n \alpha_{|i-j|-1}^{\delta/(2+\delta)}\right).
\end{align*}
Since $\sum_{i,j=1}^n \alpha_{|i-j|-1}^{\delta/(2+\delta)}<n\sum_{k\geq1}\alpha_{k-1}^{\delta/(2+\delta)}<\infty$, we have that for some $\epsilon_n^{(2)}\in \mathrm{O}(\frac{n^{\delta/2}}{n})$, 
\begin{align*}
    \int \Var\left[ \loglik \right] \rho_n(d\theta) < n\epsilon_n^{(2)}.
\end{align*}
Since $\mathcal{K}(\rho_n,\pi)\leq \sqrt{n}C$, following the concluding argument in \Cref{thm:lip-gen} completes the proof.
{
}
\end{proof}

\subsection{\Cref{thm:bd-ns}}

\begin{proof}[Proof of \Cref{thm:bd-ns}]

We verify \Cref{assume:gen-lip} and the proof follows from \Cref{thm:bound-func}. For $i\in \{1,2,\dots,K-1\}$, 
\begin{align*}
    p_{\theta}(j|i) = \begin{cases}
    \theta &~\text{if}~j=i-1,\\
    1-\theta&~\text{if}~j=i+1.
    \end{cases}
\end{align*}
If $i=0$ or $i=K$, then the Markov chain goes back to $1$ or $K-1$ respectively with probability 1. With the convention $\log\frac{0}{0}=0$, the log ratio of the transition probabilities becomes,
\begin{align*}
    | \log p_{\theta_0}(X_1|X_0)-\log p_{\theta}(X_1|X_0) | = I_{[X_1=X_0+1]}\log\left(\frac{\theta_0}{\theta}\right)+I_{[X_1=X_0-1]}\log\left(\frac{1-\theta_0}{1-\theta}\right).
\end{align*}
In this case, $m=2$. $M^{(1)}_1(X_1,X_0)=I_{[X_1=X_0+1]}$ and $M^{(1)}_2(X_1,X_0)=I_{[X_1=X_0-1]}$, both of which are bounded. 
Let $f^{(1)}_1(\theta,\theta_0) := \log\left(\frac{\theta_0}{\theta}\right)$ suppose  $f^{(1)}_2(\theta,\theta_0) :=\log\left(\frac{1-\theta_0}{1-\theta}\right)$. 

The stationary distribution  $q_{\theta}(i)=\frac{1}{K}\enspace\forall\enspace i\in {1,2,\dots,K}$. Hence the log of the ratio of the invariant distribution becomes
\begin{align}
    \log q_{0}(x)-\log q_{\theta}(x) & = 0,
\end{align}
and we can set $M^{(2)}_i(\cdot):=1$ and $f^{(2)}_i(\cdot,\cdot):=0$ for $i\in \{1,2\}$. Thus, to prove the concentration bound for this Markov chain it is enough to assume that $\delta=1$ and show that $\int [f^{(1)}_1(\theta,\theta_0)]^3 \rho_n(d\theta)<\frac{C}{n}$ and $\int [f^{(1)}_2(\theta,\theta_0)]^3 \rho_n(d\theta)<\frac{C}{n}$ for some constant $C>0$.

As given, $\{\rho_n\}$ is a sequence of beta probability distribution functions, with parameters $\alpha_n, \beta_n$ that satisfy the constraint $\frac{\alpha_n}{\alpha_n+\beta_n}=\theta_0$. Specifically, we choose $\alpha_n = n\theta_0$ and (therefore) $\beta_n = n(1-\theta_0)$. Thus, we get the following,
\begin{align*}
    \int |f^{(1)}_1(\theta,\theta_0)|^3 \rho_n(d\theta) & = \int \left|\log\left(\frac{\theta_0}{\theta}\right)\right|^3 \rho_n(d\theta)\\
    & < \int \left|\frac{\theta_0}{\theta}-1\right|^3 \rho_n(d\theta)\\
%    & = \int \left|\frac{\theta_0-\theta}{\theta}\right|^3 \rho_n(d\theta),\\
    & = \frac{1}{\text{Beta}(\alpha_n,\beta_n)} \int_0^1 \left|\frac{ \theta_0-\theta } { \theta }\right|^{3} \theta^{\alpha_n-1} (1-\theta)^{\beta_n -1}d\theta.\\
    \intertext{Since $\theta_0,\theta\in (0,1)$, so is $\frac{|\theta_0-\theta|}{2}$, giving $|\theta_0-\theta|^3<2(\theta_0-\theta)^2$. We use that fact to arrive at}
    \int |f^{(1)}_1(\theta,\theta_0)|^3 \rho_n(d\theta) & \leq \frac{2}{\text{Beta}(\alpha_n,\beta_n)} \int_0^1 ( \theta_0-\theta)^2\theta^{\alpha_n-3} (1-\theta)^{\beta_n -1}d\theta\\
    & = \frac{2\text{Beta}(\alpha_n-3,\beta_n)}{\text{Beta}(\alpha_n,\beta_n)} \frac{(\alpha_n-3)(\beta_n)}{(\alpha_n+\beta_n-3)^2(\alpha_n+\beta_n-2)}.
\end{align*}
From our choice of $\alpha_n$ and $\beta_n$, $\frac{2\text{Beta}(\alpha_n-3,\beta_n)}{\text{Beta}(\alpha_n,\beta_n)}=O(1)$, and plugging the values of $\alpha_n$ and $\beta_n$ into $\frac{(\alpha_n-3)(\beta_n)}{(\alpha_n+\beta_n-3)^2(\alpha_n+\beta_n-2)}$, we get $\frac{(\alpha_n-3)(\beta_n)}{(\alpha_n+\beta_n-3)^2(\alpha_n+\beta_n-2)}=\frac{1}{n}\frac{(\theta_0-\frac{3}{n})(1-\theta_0)}{(1-\frac{3}{n})^2(1-\frac{2}{n})}$, which is upper bounded by $\frac{C_1}{n}$ for some constant $C_1>0$. Hence,
\begin{align*}
     \int |f^{(1)}_1(\theta,\theta_0)|^3 \rho_n(d\theta) & <\frac{C_1}{n}.
\end{align*}
Similarly, we can also show that,
\begin{align*}
     \int |f^{(1)}_2(\theta,\theta_0)|^3 \rho_n(d\theta) & < \frac{C_2}{n}.
\end{align*}
Finally, from \Cref{lem:kl-prior}, we get that $\mathcal{K}(\rho_n,\pi)<C+\frac{1}{2}\log(n)$ for some large constant $C$. Hence, $\mathcal{K}(\rho_n,\pi)<C_3\sqrt{n}$ for some constant $C_3>0$.
Choosing $C=\max(C_1,C_2,C_3)$, we satisfy all the conditions of \Cref{assume:gen-lip} and \Cref{thm:bound-func}. 
\end{proof}

%
%
%
%
%
%%%%%%%%%%%%%%%%%%%%%%%%%%%%%%%%%%%%%%%%
%%%%%%%%%%%%%%%%%%%%%%%%%%%%%%%%%%%%%%%%
%%%%%%%%%%%%%%%%%%%%%%%%%%%%%%%%%%%%%%%%
%%%%%%%%%%%%%%%%%%%%%%%%%%%%%%%%%%%%%%%%
%%%%%%%%%%%%%%%%%%%%%%%%%%%%%%%%%%%%%%%%

%%%%%%%%%%%%%%%%%%%%%%%%%%%%%%%%%%%%%%%%
%%%%%%%%%%%%%%%%%%%%%%%%%%%%%%%%%%%%%%%%
%%%%%%%%%%%%%%%%%%%%%%%%%%%%%%%%%%%%%%%%
%%%%%%%%%%%%%%%%%%%%%%%%%%%%%%%%%%%%%%%%
%%%%%%%%%%%%%%%%%%%%%%%%%%%%%%%%%%%%%%%%
%%%%%%%%%%%%%%%%%%%%%%%%%%%%%%%%%%%%%%%%
\subsection{Proof of \Cref{thm:bd2-ns}}

\begin{proof}
For the purpose of this proof, we choose $\rho_n$'s with scaled Beta distribution with parameters $\alpha_n=n(2\theta_0)$ and $\beta_n=n(1-2\theta_0)$. Since, $\rho_n$ is a scaled Beta distribution with the scaling factors $m=0.5$ and $c=0$, the pdf of $\rho_n$ is given by
\begin{align*}
    \rho_n(\theta) & = \frac{0.5}{\text{Beta}(\alpha_n,\beta_n)}\left(2\theta\right)^{\alpha_n}\left(1-2\theta\right)^{\beta_n}
\end{align*} 
Since this is a scaled distribution, $E_{\rho_n}[\theta]=0.5\frac{\alpha_n}{\alpha_n+\beta_n}=\theta_0$ and there exists a constant $\sigma>0$, $\Var_{\rho_n}[\theta]=\frac{\sigma^2}{n}$. Now, we analyse the transition probabilities.
For $i \in \{1,2,\dots\}$, the Birth-Death process has transition probabilities
\begin{align*}
    p_{\theta}(j|i) = \begin{cases}
    \theta &~\text{if}~j=i-1,\\
    1-\theta&~\text{if}~j=i+1.
    \end{cases}
\end{align*}
If $i=0$, then the Markov chain goes to $1$ with probability $1$. Hence with the convention $\log\frac{0}{0}=0$ the ratio of the log of the transition probabilities becomes,
\begin{align*}
    | \log p_{\theta_0}(X_1|X_0)-\log p_{\theta}(X_1|X_0) | = I_{[X_1=X_0+1]}\log\left[\frac{\theta_0}{\theta}\right]+I_{[X_1=X_0-1]}\log\left[\frac{1-\theta_0}{1-\theta}\right].
\end{align*}
In this case, $m=3$. $M^{(1)}_1(X_1,X_0)=I_{[X_1=X_0+1]}$ and $M^{(1)}_2(X_1,X_0)=I_{[X_1=X_0-1]}$. Define $M^{(1)}_3(X_1,X_0) := 1$. All these random variables are bounded. Define $f^{(1)}_1(\theta,\theta_0) := \log\left[\frac{\theta_0}{\theta}\right], f^{(1)}_2(\theta,\theta_0) :=\log\left[\frac{1-\theta_0}{1-\theta}\right]$ and $f^{(1)}_3(\theta,\theta_0):=0$. Similarly as in the proof on \Cref{thm:bd-ns}, 
\begin{align*}
     \int [f^{(1)}_1(\theta,\theta_0)]^3 \rho_n(d\theta) & <\frac{C_1}{n}, \text{ and }\\
     \int [f^{(1)}_2(\theta,\theta_0)]^3 \rho_n(d\theta) & < \frac{C_2}{n}.
\end{align*}
The stationary distribution is given by $q_{\theta}(i)=(\frac{\theta}{1-\theta})^{i-1} q_{\theta}(1)\enspace\forall\enspace i\in {1,2,\dots}$, so that $q_{\theta}(i)= (1-\theta)(\frac{\theta}{1-\theta})^{i-1}$  Hence the log of the ratio of the invariant distribution becomes
\begin{align}
    \log q_{0}(i)-\log q_{\theta}(i) & = \log\left[\frac{1-\theta_0}{1-\theta}\right]+(i-1)\log\left[\frac{\theta_0}{\theta}\right]-(i-1)\log\left[\frac{1-\theta_0}{1-\theta}\right]
\end{align}
We define $M^{(2)}_1(X_0):=1$, and $M^{(2)}_2(X_0)=M^{(2)}_3(X_0):=X_0-1$. We can write $\E_{q^{(0)}} [M^{(2)}_2(X_0)]^2 = \sum_{i=1}^{\infty} (i-1)^2 q^{(0)}(i)< \sum_{i=1}^{\infty} i^2 q^{(0)}(i)$. We have chosen $q^{(0)}$ such that $\sum_{i=1}^{\infty} i^2 q^{(0)}(i)$ is bounded. Hence, $\E_{q^{(0)}} [M^{(2)}_2(X_0)]^2 <\infty$. To verify \Cref{assume:gen-lip(0)} define,
$f^{(2)}_1(\theta,\theta_0)=-f^{(2)}_3(\theta,\theta_0):=\log\left[\frac{1-\theta_0}{1-\theta}\right]$, and define $f^{(2)}_2(\theta,\theta_0):=\log\left[\frac{\theta_0}{\theta}\right]$. Therefore following the proof of \Cref{thm:bd-ns},
\begin{align*}
    \int |f^{(2)}_1(\theta,\theta_0)|^3 \rho_n(d\theta)=\int |f^{(2)}_3(\theta,\theta_0)|^3 \rho_n(d\theta) = & \int |f^{(1)}_2(\theta,\theta_0)|^3\rho_n(d\theta)<\frac{C_2}{n}, \text{ and },\\ 
    \int |f^{(2)}_2(\theta,\theta_0)|^3 \rho_n(d\theta) = & \int |f^{(1)}_1(\theta,\theta_0)|^3 \rho_n(d\theta)<\frac{C_1}{n}.
\end{align*}
Finally, we take the KL-divergence $\mathcal{K}(\rho_n,\pi)$. $\rho_n$ follows a scaled Beta distribution on $(0,1/2)$ with parameters $\alpha_n=n(2\theta_0)$ and $\beta_n=n(1-2\theta_0)$, while $\pi$ follows a scaled Beta distribution on $(0,1/2)$ with parameters $\alpha$ and $\beta$. Thus,
\begin{align*}
    \mathcal{K}(\rho_n,\pi) & = \int_0^{\frac{1}{2}} \frac{\rho_n(\theta)}{\pi(\theta)} \rho_n(d\theta),\\
    \intertext{which, by substituting $t=2\theta$, we get,}
    \mathcal{K}(\rho_n,\pi) & = 2\int_0^{1} \frac{\rho_n(t)}{\pi(t)} \rho_n(dt).
\end{align*}
$\int_0^{1} \frac{\rho_n(t)}{\pi(t)} \rho_n(dt)$ is the KL-divergence between a Beta distribution with parameters $\alpha_n$ and $\beta_n$ and a Beta distribution with parameters $\alpha$ and $\beta$. An application of \cref{lem:kl-prior} gives us for a constant $C_1>0$,
\begin{align*}
    \int_0^{1} \frac{\rho_n(t)}{\pi(t)} \rho_n(dt) < C_1+\frac{1}{2}\log(n).
\end{align*} 
Hence we can say, $  \mathcal{K}(\rho_n,\pi) <2\left[C_1+\frac{1}{2}\log(n)\right]$. Thus, we now get that for some constant $C_3>0$,
\begin{align*}
    \mathcal{K}(\rho_n,\pi) <C_3\sqrt{n}.
\end{align*}
Choosing $C=\max(C_1,C_2,C_3)$ we satisfy all of the conditions of \Cref{assume:gen-lip} and thus by \Cref{thm:bound-func}, we are complete the proof.
\end{proof}

%%%%%%%%%%%%%%%%%%%%%%%%%%%%%%%%%%%%%%%%
%%%%%%%%%%%%%%%%%%%%%%%%%%%%%%%%%%%%%%%%
%%%%%%%%%%%%%%%%%%%%%%%%%%%%%%%%%%%%%%%%
%%%%%%%%%%%%%%%%%%%%%%%%%%%%%%%%%%%%%%%%
%%%%%%%%%%%%%%%%%%%%%%%%%%%%%%%%%%%%%%%%
%%%%%%%%%%%%%%%%%%%%%%%%%%%%%%%%%%%%%%%%

%%%%%%%%%%%%%%%%%%%%%%%%%%%%%%%%%%%%%%%%
%%%%%%%%%%%%%%%%%%%%%%%%%%%%%%%%%%%%%%%%
%%%%%%%%%%%%%%%%%%%%%%%%%%%%%%%%%%%%%%%%
%%%%%%%%%%%%%%%%%%%%%%%%%%%%%%%%%%%%%%%%
%%%%%%%%%%%%%%%%%%%%%%%%%%%%%%%%%%%%%%%%
%%%%%%%%%%%%%%%%%%%%%%%%%%%%%%%%%%%%%%%%

\subsection{Proof of \Cref{thm:lip-gen-ns} : }
\begin{proof}
\textit{Verification of condition (i) of \Cref{thm:pac-bayes2}}
As in the proof of \Cref{thm:lip-gen} substitute the true parameter $\theta_0$  for $\theta_1$ and $\theta$ for $\theta_2$. 
We also set $q_{1}^{(0)}$ and $q_{2}^{(0)}$ to the known initial distribution $q^{(0)}$. 
Similar to the steps leading to \cref{equation:kld-upper-bound-ns}, we get
\begin{align*}
    \kld & \leq \sum_{i=1}^n  \sum_{k=1}^{m} \E \left[ M^{(1)}_k(X_i,X_{i-1}) \right]|f^{(1)}_k(\theta,\theta_0)|.
\end{align*}
Consider the term $\E\left[{ M^{(1)}_k(X_i,X_{i-1})} \right]$. 
With $q^{(i-1)}_{\theta_0}$ the marginal distribution of $X_{i-1}$, we have
\begin{align*}
    \E\left[ M^{(1)}_k(X_i,X_{i-1}) \right] & = \int M^{(1)}_k(x_i,x_{i-1}) p_{\theta_0}(x_{i}|x_{i-1}) q^{(i-1)}_{\theta_0}(x_{i-1}) dx_{i}dx_{i-1}.\\
\E\left[ M^{(1)}_k(X_i,X_{i-1}) \right]  & =  \int M^{(1)}_k(x_i,x_{i-1}) p_{\theta_0}(x_{i}|x_{i-1}) p_{\theta_0}^{i-1}(x_{i-1}|x_{0}) q^{(0)}_{\theta_0}(x_0)dx_{0} dx_{i}dx_{i-1}\\
\intertext{ Recall that the marginal density satisfies $q^{(i-1)}_{\theta_0}(x_{i-1})=\int  p_{\theta_0}^{i-1}(x_{i-1}|x_{0}) q^{(0)}_{\theta_0}(x_0)d(x_{0})$, where $ p_{\theta_0}^{i}(\cdot|x_{0})$ is the $i$-step transition probability. Then }
  \E\left[ M^{(1)}_k(X_i,X_{i-1}) \right]  & =  \int \E \left[M^{(1)}_k(X_i,x_{i-1})|x_{i-1}\right] p_{\theta_0}^{i-1}(x_{i-1}|x_{0}) q^{(0)}_{\theta_0}(x_0)dx_{0} dx_{i-1}.
\end{align*}
 Since the Markov chain $\{X_n\}$ satisfies \Cref{assume:drift}, we know by the application of \Cref{theorem:v-geom} that $\{X_n\}$ is $V$-geometrically ergodic. Hence, $\exists \enspace \tau<1$, $R<\infty$  such that $\forall \enspace |f|<V$
\begin{align*}
%    \| P(X_{i-1}=x_{i-1}|X_0=x_{0})-\pi(x_{i-1}) \|_f & < RV(x_0)\tau^{i-1}.\\
%\intertext{Which also implies that,}
    |\int f(x_{i-1}) p_{\theta_0}^{i-1}(x_{i-1}|x_{0})  dx_{i-1} & -\int f(x_{i-1}) q_{\theta_0}(x_{i-1})dx_{i-1}|  < RV(x_0)\tau^{i-1},\\
\intertext{where $q_{\theta_0}$ is the stationary distribution, implying that}
    \int f(x_{i-1}) p_{\theta_0}^{i-1}(x_{i-1}|x_{0})  dx_{i-1} & < \int f(x_{i-1}) q_{\theta_0}(x_{i-1})dx_{i-1}+RV(x_0)\tau^{i-1}.
\end{align*}
Using Jensen's inequality we have $\left(\E\left[ M^{(1)}_k(X_i,X_{i-1})|X_{i-1} \right]\right)^{2+\delta} \leq \E\left[ M^{(1)}_k(X_i,X_{i-1})^{2+\delta}|X_{i-1} \right]<V(X_{i-1})$. Since $V(\cdot) \geq 1$, it follows that  $\E\left[ M^{(1)}_k(X_i,X_{i-1})|X_{i-1} \right]<V(X_{i-1})^{1/(2+\delta)} \leq V(X_{i-1})$.
Thus, setting $f(x) = \E\left[M^{(1)}_k(X_i,X_{i-1})|X_{i-1} = x\right]$, we obtain
\begin{align*}
     \E\left[ M^{(1)}_k(X_i,X_{i-1}) \right] & <\int\left[  \E\left[M^{(1)}_k(X_i,X_{i-1})|X_{i-1}\right] q_{\theta_0}(x_i)dx_{i-1}+RV(x_0)\tau^{i-1}\right]q^{(0)}(x_0)dx_0\\
     & =  \E[M^{(1)}_k(X_1,X_0)]+\tau^{i-1}\int RV(x_0)q^{(0)}(x_0)dx_{0}.\\
\intertext{Summing from $i=1$ to $n$, we get }
    \sum_{i=1}^n \E\left[ M^{(1)}_k(x_i,x_{i-1}) \right]& < n\E[M^{(1)}_k(X_1,X_0)]+\sum_{i=1}^n \tau^{i-1}\int RV(x_0)q^{(0)}(x_0)dx_{0} \\
     & =  n\E[M^{(1)}_k(X_1,X_0)]+\frac{1-\tau^n}{1-\tau}\int RV(x_0)q^{(0)}(x_0)dx_{0}.
\end{align*}
This gives us the following bound on $\int \kld \rho_n(d\theta)$:
\begin{align*}
     \int \kld \rho_n(d\theta) & \leq \sum_{k=1}^m \left[n\E[M^{(1)}_k(X_1,X_0)]+\frac{1-\tau^n}{1-\tau}\int RV(x_0)D(x_0)dx_{0}\right]\int |f^{(1)}_k(\theta,\theta_0)|\rho_n(d\theta).
\end{align*}
By \Cref{assume:gen-lip}, $\int |f^{(1)}_k(\theta,\theta_0)|\rho_n(d\theta)<\frac{C}{\sqrt{n}}$. Hence, we can rewrite the previous expression as
\begin{align*}
    \int \kld \rho_n(d\theta) & \leq \sum_{k=1}^m \left[n\E[M^{(1)}_k(X_1,X_0)]+\frac{1-\tau^n}{1-\tau}\int RV(x_1)D(x_1)dx_{1}\right]\frac{C}{\sqrt{n}}\\
    & = nm\left[\E[M^{(1)}_k(X_1,X_0)]+\frac{1-\tau^n}{n(1-\tau)}\int RV(x_0)D(x_0)dx_{0}\right]\frac{C}{\sqrt{n}}.
    \intertext{Since, $\tau<1$, $0<1-\tau^n<1$, and we rewrite the previous equation as,}
    \int \kld \rho_n(d\theta) &  \leq nm \left[\E[M^{(1)}_k(X_1,X_0)] + \frac{1}{n(1-\tau)}\int RV(x_0)D(x_0)dx_{0}\right]\frac{C}{\sqrt{n}}.
    \intertext{Hence, there exists an $\epsilon^{(1)}_n\in \mathrm{O}(\frac{1}{\sqrt{n}})$ such that $\int \kld \rho_n(d\theta)  \leq n\epsilon^{(1)}_n$.}
\end{align*}
\textit{Verification of condition (ii) of \Cref{thm:pac-bayes2}:} Similar to as in the proof of \Cref{thm:bound-func}, we upper bound $\int \Var\left[ \loglik \right] \rho_n(d\theta)$ by
\begin{align}
     \int \Var\left[ \loglik \right] \rho_n(d\theta) & \leq \sum_{i,j=1}^n\left(\frac{4}{n}+2n^{\delta/2}(\int C_{\theta_0,\theta}^{(i)}\rho_n(d\theta)+\int C_{\theta_0,\theta}^{(j)}\rho_n(d\theta)+\int \sqrt{C_{\theta_0,\theta}^{(i)}C_{\theta_0,\theta}^{(j)}}\rho_n(d\theta))\right)  \left(\alpha_{|i-j|-1}^{\delta/(2+\delta)}\right)\\\nonumber
     &\qquad+ \sum_{i=1}^n\left(\frac{4}{n}+2n^{\delta/2}\int C_{\theta_0,\theta}^{(i)}\rho_n(d\theta)\right)\left(\alpha_{i-1}^{\delta/(2+\delta)}\right),
\end{align}
where $C_{\theta_0,\theta}$ is upper bounded as
\begin{align*}
    C_{\theta_0,\theta}^{(i)} & \leq \sum_{k=1}^{m} m^{1+\delta} \E\left[ M^{(1)}_k(X_i,X_{i-1})\right]^{2+\delta}|f^{(1)}_k(\theta,\theta_0)|^{2+\delta}.
    \intertext{Since $\E\left[ M^{(1)}_k(X_i,X_{i-1})^{2+\delta}|X_{i-1} \right]<V(X_{i-1})$, by a similar application of $V$-geometric ergodicity, we can say that, $\exists\enspace 0<\tau<1$, such that }
     \E\left[ M^{(1)}_k(X_i,X_{i-1}) \right]^{2+\delta}& \leq  n\E[M^{(1)}_k(X_1,X_0)]^{2+\delta}+ \tau^{i-1}\int RV(x_0)D(x_0)dx_{0}, 
     \intertext{which, by the fact that $\tau^{i-1}<\tau$, gives us,}
     \E\left[ M^{(1)}_k(X_i,X_{i-1}) \right]^{2+\delta} & \leq  \E[M^{(1)}_k(X_1,X_0)]^{2+\delta}+\tau\int RV(x_0)D(x_0)dx_{0}.
\end{align*}
By \Cref{assume:gen-lip}, we know that, $\int |f^{(1)}_k(\theta,\theta_0)|^{2+\delta}\rho_n(d\theta)<\frac{C}{n}$. Hence, for a large constant $L$, $\int C_{\theta_0,\theta}^{(i)} \rho_n(d\theta) \leq \frac{L}{n}$. We also see that since the chain is geometrically ergodic, by the application of \cref{eq:alpha_mixing-coeff-sum}, $\sum_{k \geq 1} \alpha_k^{\delta/(2+\delta)} < +\infty$. The rest of the proof follows similarly as in the proof of \Cref{thm:bound-func}, and we obtain an $\epsilon^{(2)}_n\in \mathrm{O}(\frac{n^{\delta/2}}{n})$, such that,
\begin{align*}
    \int \Var[\loglik]\rho_n(d\theta) < n\epsilon^{(2)}_n.
\end{align*}
Since, $\mathcal{K}(\rho_n,\pi)\leq \sqrt{n}C$, similar arguments as in the proof of \Cref{thm:lip-gen} holds. The theorem is thus proved.

\end{proof}

\subsection{Proof of \Cref{thm:gen-nonstationary-hajek}}

\begin{proof}
\textit{Verification of condition (i) of \Cref{thm:pac-bayes2}}
As in the proof of \Cref{thm:lip-gen} substitute the true parameter $\theta_0$  for $\theta_1$ and $\theta$ for $\theta_2$. We also set our initial distributions  $q_1^{(0)}$ and $q_2^{(0)}$ to the known initial distribution $q^{(0)}$. A method similar to \Cref{equation:kld-upper-bound-ns}, yields
\begin{align*}
    \kld & \leq \sum_{i=1}^n  \sum_{k=1}^{m} \E \left[ M^{(1)}_k(X_i,X_{i-1}) \right]|f^{(1)}_k(\theta,\theta_0)|.
\end{align*}
Because $M^{(1)}_k$s satisfy \Cref{assume:hajek}, it follows by the application of Theorem 2.3, \cite{hajek} that $\exists \enspace \lambda>0$ such that for any $0<\kappa\leq \lambda$, and for some $\zeta \in (0,1)$ possibly depending upon $\lambda$, 
\begin{align*}\E\left[e^{\kappa M^{(1)}_k(X_i,X_{i-1})}\right|X_1,X_0] \leq \zeta^{i-1}e^{\kappa M^{(1)}_k(X_1,X_{0})}+\frac{1-\zeta^i}{1-\zeta}\mathcal{D}e^{\kappa a} \quad \text{ for all } i>1.
\end{align*}
We rewrite $\E \left[ M^{(1)}_k(X_i,X_{i-1}) |X_1,X_0\right]$ as follows:
\begin{align*}
    \E \left[ M^{(1)}_k(X_i,X_{i-1}) |X_1,X_0 \right] & =  \frac{\E[ \kappa M^{(1)}_k(X_i,X_{i-1})|X_1,X_0]}{\kappa}\\
    & \leq \frac{\E[e^{\kappa M^{(1)}_k(X_i,X_{i-1})}|X_1,X_0]}{\kappa}.
\end{align*}
Therefore, $\sum_{i=1}^n\E \left[ M^{(1)}_k(X_i,X_{i-1}) \right]$ can be upper bounded as, 
\begin{align*}
    \sum_{i=1}^n\E \left[ M^{(1)}_k(X_i,X_{i-1}) \right] & =\sum_{i=1}^n\E \left[ M^{(1)}_k(X_i,X_{i-1})|X_1,X_0 \right]\kappa^{-1}\\
    &\leq\sum_{i=1}^n  \left[ \zeta^{i-1}\E e^{\kappa M^{(1)}_k(X_1,X_{0})}+\frac{1-\zeta^i}{1-\zeta}\mathcal{D}e^{\kappa a}\right]\kappa^{-1}.
    \intertext{Since, $\zeta\in (0,1)$, $\zeta^i<1$. Hence, we can write that,}
    \sum_{i=1}^n  \left[ \zeta^{i-1}\E e^{\kappa M^{(1)}_k(X_1,X_{0})}+\frac{1-\zeta^i}{1-\zeta}\mathcal{D}e^{\kappa a}\right] & \leq \sum_{i=1}^{n}  \left[ \zeta^{i-1}\E e^{\kappa M^{(1)}_k(X_1,X_{0})}+\frac{1}{1-\zeta}\mathcal{D}e^{\kappa a}\right]\kappa^{-1}\\
    & = \left[ \frac{1-\zeta^n}{1-\zeta}\E e^{\kappa M^{(1)}_k(X_1,X_{0})}+\frac{n}{1-\zeta}\mathcal{D}e^{\kappa a}\right]\kappa^{-1}\\
    & \leq nL,
\end{align*}
for a large constant $L$. Therefore $\int \kld \rho_n(d\theta)$ can be upper bounded as follows,
\begin{align*}
    \int \kld \rho_n(d\theta) & \leq \int \sum_{k=1}^m  nL|f^{(1)}_k(\theta,\theta_0)| \rho_n(d\theta)\\
    & = \sum_{k=1}^m  nL \int |f^{(1)}_k(\theta,\theta_0)| \rho_n(d\theta).
    \intertext{By \Cref{assume:gen-lip}, $\int |f^{(1)}_k(\theta,\theta_0)| \rho_n(d\theta)<\frac{C}{n}$, hence,  }
    \int \kld \rho_n(d\theta) & \leq  n L \frac{C}{\sqrt{n}}.
\end{align*}
Hence, for some $\epsilon_n^{(1)}\in \mathrm{O}(\frac{1}{\sqrt{n}})$, we have obtained that, $\int \kld \rho_n(d\theta)\leq n\epsilon_n^{(1)}$.

\textit{Verification of condition (ii) of \Cref{thm:pac-bayes2}:} Similar to as in the proof of \Cref{thm:bound-func}, we upper bound $\int \Var\left[ \loglik \right] \rho_n(d\theta)$ by
\begin{align}
     \int \Var\left[ \loglik \right] \rho_n(d\theta) & \leq \sum_{i,j=1}^n\left(\frac{4}{n}+2n^{\delta/2}(\int C_{\theta_0,\theta}^{(i)}\rho_n(d\theta)+\int C_{\theta_0,\theta}^{(j)}\rho_n(d\theta)+\int \sqrt{C_{\theta_0,\theta}^{(i)}C_{\theta_0,\theta}^{(j)}}\rho_n(d\theta))\right)  \left(\alpha_{|i-j|-1}^{\delta/(2+\delta)}\right)\\\nonumber
     &\qquad+ \sum_{i=1}^n\left(\frac{4}{n}+2n^{\delta/2}\int C_{\theta_0,\theta}^{(i)}\rho_n(d\theta)\right)\left(\alpha_{i-1}^{\delta/(2+\delta)}\right),
\end{align}
where $C_{\theta_0,\theta}$ is upper bounded as
\begin{align*}
    C_{\theta_0,\theta}^{(i)} & \leq \sum_{k=1}^{m} m^{1+\delta} \E\left[ M^{(1)}_k(X_i,X_{i-1})\right]^{2+\delta}|f^{(1)}_k(\theta,\theta_0)|^{2+\delta}.
\end{align*}
There exists a constant $C_{\delta}$ depending upon $\delta$ such that,
\begin{align*}
[M^{(1)}_k]^{2+\delta}(X_i,X_{i-1}) & = \frac{\kappa^{2+\delta}[M^{(1)}_k]^{2+\delta}(X_i,X_{i-1})^{2+\delta}}{\kappa^{2+\delta}}\\
& \leq \frac{ e^{\kappa M^{(1)}_k(X_i,X_{i-1})}+C_{\delta}}{\kappa^{2+\delta}}.
\end{align*}
By expressing $ \E \left[ M^{(1)}_k(X_i,X_{i-1})^{2+\delta} \right] = \E\left[ \E \left[ M^{(1)}_k(X_i,X_{i-1})^{2+\delta} |X_1,X_0 \right] \right] $ and following a method similar to the previous part, we get,
\begin{align*}
      \E \left[ M^{(1)}_k(X_i,X_{i-1})^{2+\delta} \right] & \leq \frac{\left[ \zeta^i\E e^{\kappa M^{(1)}_k(X_1,X_{0})}+\frac{1-\zeta^i}{1-\zeta}\mathcal{D}e^{\kappa a}\right] +C_{\delta}}{\kappa^{2+\delta}}.
     \intertext{The fact that $0<\zeta<1$ implies that $0<\zeta^i<\zeta$. This gives us the following,}
     \E \left[ M^{(1)}_k(X_i,X_{i-1})^{2+\delta} \right] & \leq \frac{\left[ \zeta\E e^{\kappa M^{(1)}_k(X_1,X_{0})}+\frac{1}{1-\zeta}\mathcal{D}e^{\kappa a}\right] +C_{\delta}}{\kappa^{2+\delta}}.
     \intertext{Since $\kappa<\lambda$, by the application of Jensen's inequality, we get}
     \E \left[ M^{(1)}_k(X_i,X_{i-1})^{2+\delta} \right] & \leq \frac{\left[ \zeta\E e^{\lambda M^{(1)}_k(X_1,X_{0})}+\frac{1}{1-\zeta}\mathcal{D}e^{\kappa a}\right] +C_{\delta}}{\kappa^{2+\delta}}\\
     & = \frac{\left[ \zeta\int e^{\lambda M^{(1)}_k(x_1,x_{0})}p_{\theta_0}(x_1|x_0)D(x_0)dx_1dx_0+\frac{1}{1-\zeta}\mathcal{D}e^{\kappa a}\right] +C_{\delta}}{\kappa^{2+\delta}}.
\end{align*}
We know that $\int |f^{(1)}_k(\theta,\theta_0)|^{2+\delta}\rho_n(d\theta)<\frac{C}{n}$. Thus, following \Cref{assume:gen-lip} we can say that, for a large constant $L$, $\int C_{\theta_0,\theta}^{(i)} \rho_n(d\theta) \leq \frac{L}{n}$. The rest of the proof follows similarly as in the proof of \Cref{thm:bound-func}, and we obtain an $\epsilon^{(2)}_n\in \mathrm{O}(\frac{n^{\delta/2}}{n})$, such that,
\begin{align*}
    \int \Var[\loglik]\rho_n(d\theta) < n\epsilon^{(2)}_n.
\end{align*}
Since, $\mathcal{K}(\rho_n,\pi)\leq \sqrt{n}C$, similar arguments as in the proof of \Cref{thm:lip-gen} holds. The theorem is thus proved. 
\end{proof}

\subsection{Proof of \Cref{ex:slm}}
\begin{proof}
For the purpose of the proof, we choose $\rho_n$'s with scaled Beta distribution with parameters $\alpha_n=n\frac{1+\theta_0}{2}$ and $\beta_n=n\frac{1-\theta_0}{2}$. Since, $\rho_n$ is a scaled Beta distribution with the scaling factors $m=2$ and $c=-1$, the pdf of $\rho_n$ is given by
\begin{align*}
    \rho_n(\theta) & = \frac{2}{\text{Beta}(\alpha_n,\beta_n)}\left(\frac{1+\theta}{2}\right)^{\alpha_n}\left(\frac{1-\theta}{2}\right)^{\beta_n}
\end{align*} 
Since this is a scaled distribution, $E_{\rho_n}[\theta]=2\frac{\alpha_n}{\alpha_n+\beta_n}-1=\theta_0$ and there exists a constant $\sigma>0$, $\Var_{\rho_n}[\theta]=\frac{\sigma^2}{n}$.
We now analyse the log-ratio of the transition probabilities for the Markov chain,
\begin{align*}
 \log p_{\theta_0}(X_n|X_{n-1})-\log p_{\theta}(X_n|X_{n-1})  = 2X_nX_{n-1}(\theta-\theta_0)+X_{n-1}^2(\theta_0^2-\theta^2).\\
\end{align*}
Observe that in this setting, $M^{(1)}_1(X_n,X_{n-1})=|X_nX_{n-1}|$ and $M^{(1)}_2(X_n,X_{n-1})=X_n^2$. 
Next, using the fact that
\begin{align*}
\E[|X_n|^{2+\delta}|X_{n-1}] & = \E[|X_n-\theta_0X_{n-1}+\theta_0X_{n-1}|^{2+\delta}|X_{n-1}], \\
\intertext{and by an application of triangle inequality, we obtain}
    \E[|X_n|^{2+\delta}|X_{n-1}] & \leq \E \left[\left(|X_n-\theta_0X_{n-1}|+|\theta_0X_{n-1}|\right)^{2+\delta}|X_{n-1}\right]\\
    & = \E \left[\left(2 \frac{|X_n-\theta_0X_{n-1}|+|\theta_0X_{n-1}|}{2}\right)^{2+\delta}|X_{n-1}\right]\\
    & = \E \left[2^{2+\delta}\left( \frac{|X_n-\theta_0X_{n-1}|+|\theta_0X_{n-1}|}{2}\right)^{2+\delta}|X_{n-1}\right].\\
\intertext{Now by using Jensen's inequality we get,}
     \E[|X_n|^{2+\delta}|X_{n-1}] & \leq \E \left[2^{2+\delta}\left( \frac{|X_n-\theta_0X_{n-1}|^{2+\delta}+|\theta_0X_{n-1}|^{2+\delta}}{2}\right)|X_{n-1}\right]\\
    & = 2^{1+\delta}\E\left[|X_n-\theta_0X_{n-1}|^{2+\delta}|X_{n-1}\right]+2^{1+\delta}|\theta_0X_{n-1}|.\\
\end{align*}
We know if $Y\sim N(\mu,\sigma^2)$, then $
\E|Y-\mu|^p  =\sigma^p\frac{2^{\frac{p}{2} \Gamma(\frac{p+1}{2})}}{\sqrt{\pi}}$. Consequently,
\begin{align}~\label{equation:expectation-normal-power}
    \E[|X_n|^{2+\delta}|X_{n-1}] & \leq 2^{1+\delta}\left[\frac{2^{\frac{2+\delta}{2} \Gamma(\frac{3+\delta}{2})}}{\sqrt{\pi}}\right]+2^{1+\delta}|\theta_0X_{n-1}|^{2+\delta}.
\end{align}
It follows that,
\begin{align*}
    \E[M^{(1)}_1(X_n,X_{n-1})^{2+\delta}|X_{n-1}] & \leq  2^{1+\delta}\left[\frac{2^{\frac{2+\delta}{2} \Gamma(\frac{3+\delta}{2})}}{\sqrt{\pi}}\right]|X_{n-1}|^{2+\delta}+2^{1+\delta}|\theta_0|^{2+\delta}|X_{n-1}|^{4+2\delta}\\
    & \leq \left(2^{1+\delta}\left[\frac{2^{\frac{2+\delta}{2} \Gamma(\frac{3+\delta}{2})}}{\sqrt{\pi}}\right]+2^{1+\delta}|\theta_0|^{2+\delta}\right)(|X_{n-1}|^{4+2\delta}+1).\\
\intertext{Since $\theta_0<1$, we can say,}
    \E[M^{(1)}_1(X_n,X_{n-1})^{2+\delta}|X_{n-1}] & \leq \left(2^{1+\delta}\left[\frac{2^{\frac{2+\delta}{2} \Gamma(\frac{3+\delta}{2})}}{\sqrt{\pi}}\right]+2^{1+\delta}\right)(|X_{n-1}|^{4+2\delta}+1).\\
\intertext{Define a constant $C_{\delta}:= \left(2^{1+\delta}\left[\frac{2^{\frac{2+\delta}{2} \Gamma(\frac{3+\delta}{2})}}{\sqrt{\pi}}\right]+2^{1+\delta}\right)$. The above term then becomes,}
   \E[M^{(1)}_1(X_n,X_{n-1})^{2+\delta}|X_{n-1}] & \leq  C_{\delta}(|X_{n-1}|^{4+2\delta}+1).\\
\end{align*}
Next we analyse the term $M^{(1)}_2(X_n,X_{n-1})$.
\begin{align*}
    \E\left[ M^{(1)}_2(X_n,X_{n-1})^{2+\delta} |X_{n-1}\right] & = \E[X_{n-1}^{4+2\delta}|X_{n-1}]\\
    & = X_{n-1}^{4+2\delta}\\
    & \leq C_{\delta}(X_{n-1}^{4+2\delta}+1).
\end{align*}
Then, defining $V(x):=C_{\delta}(x^{4+2\delta}+1)$ it follows that,
\begin{align*}
    \E \left[V(X_n) | X_{n-1}\right] & = \E \left[C_{\delta}(X_{n}^{4+2\delta}+1) | X_{n-1}\right]. \\
\intertext{Using a technique similar to \Cref{equation:expectation-normal-power} we get,}
     \E \left[C_{\delta}(X_{n}^{4+2\delta}+1) | X_{n-1}\right] & \leq \left[C_{\delta}( 2^{3+2\delta}\left[\frac{2^{\frac{4+2\delta}{2} \Gamma(\frac{5+2\delta}{2})}}{\sqrt{\pi}}\right]+2^{3+2\delta}|\theta_0X_{n-1}|^{4+2\delta}+1) \right].\\
\end{align*}
Define another constant $C_{\delta}':= C_{\delta}\left(2^{3+2\delta}\left[\frac{2^{\frac{4+2\delta}{2} \Gamma(\frac{5+2\delta}{2})}}{\sqrt{\pi}}\right]-2^{3+2\delta}|\theta_0|^{4+2\delta}+1\right)$. Since $\delta>0$, $\frac{2^{\frac{4+2\delta}{2} \Gamma(\frac{5+2\delta}{2})}}{\sqrt{\pi}}>1$. And since $|\theta_0|<1$, so is $|\theta_0|^{4+2\delta}$. Hence,

\begin{equation*}
    2^{3+2\delta}\left[\frac{2^{\frac{4+2\delta}{2} \Gamma(\frac{5+2\delta}{2})}}{\sqrt{\pi}}\right]-2^{3+2\delta}|\theta_0|^{4+2\delta}> 0.
\end{equation*}
Hence, we have shown that,
\begin{align*}
     \E \left[V(X_n) | X_{n-1}\right] & \leq (2^{3+2\delta}|\theta_0|^{4+2\delta})C_{\delta}(X_{n-1}^{4+2\delta}+1)+C_{\delta}'.
     \intertext{Since $|\theta_0|<2^{\frac{1}{4+2\delta}-1}$, $2^{3+2\delta}|\theta_0|^{4+2\delta}<1$, and we can express the above equation as, }
     \E \left[V(X_n) | X_{n-1}\right] & \leq V(X_{n-1})+C_{\delta}'.
\end{align*}
Define the set $C(m) := \{x:|x|^{4+2\delta}+1\leq m\}$. From Proposition 11.4.2, \cite{meyntweedie}, for a large enough $m$, $C(m)$ forms a petite set. Thus, we have proved that $V(x)$ as defined in this example satisfies \Cref{assume:drift}, and $\{X_n\}$ is $V$-geometrically ergodic.
The $f^{(1)}_j$'s corresponding to \Cref{assume:gen-lip} are given by $f^{(1)}_1(\theta,\theta_0)=(\theta-\theta_0)$ and $f^{(1)}_2(\theta,\theta_0)=(\theta_0^2-\theta^2)$. Therefore, it follows that,
\begin{align*}
    \partial_{\theta}  f^{(1)}_1 & = 1,\\
    \partial_{\theta}  f^{(1)}_2 & = -2\theta \text{ and } \\
    -2 & < -2\theta < 2.
\end{align*}
Since $f^{(1)}_{1} (\theta_0,\theta_0)=f^{(1)}_{2} (\theta_0,\theta_0)=0$, We just showed that they also have bounded partial derivatives. We also know that $|\theta|<1$. Hence, by \Cref{prop:3} $f^{(1)}_j$'s satisfy the conditions of \Cref{assume:gen-lip}.

The invariant distribution for the simple linear model Markov-chain under parameter $\theta$ is given by a gaussian distribution with mean $0$ and variance $\frac{1}{1-\theta^2}$. In other words,
\begin{align*}
    q_{\theta}(x)= \frac{1}{\sqrt{2\pi}}e^{-\frac{1-\theta^2}{2}x^2}.
\end{align*}
Analyzing the log likelihood yields,
\begin{align*}
    \log q_{0}(x)-\log q_{\theta}(x) & =-\frac{x^2}{2}(1-\theta_0^2)+\frac{x^2}{2}(1-\theta^2)\\
    & = \frac{x^2}{2}(\theta_0^2-\theta^2).
\end{align*}
Let $f^{(2)}_1(\theta_0,\theta_0)=(\theta_0^2-\theta^2)$ and $f^{(2)}_1(\theta_0,\theta_0)=0$. Since $f^{(2)}_1(\theta_0,\theta_0)=f^{(1)}_2(\theta_0,\theta_0)$, by following arguments similar as before, can conclude that $f^{(2)}_1(\theta_0,\theta_0)$ also satisfies the requirements of \Cref{assume:gen-lip}. Let $M^{(2)}_1(x)=\frac{x^2}{2}$ and define $M^{(2)}_2(x):=1$. Let $X_0\sim q_1^{(0)}$. As long as $\int x^{4+2\delta} q_1^{(0)}(x) dx<\infty$, we satisfy all the conditions required for \Cref{thm:lip-gen-ns}.
Finally we need to verify the condition that $\mathcal{K}(\rho_n,\pi)<C\sqrt{n}$ for some constant $C>0$. 
The KL-divergence $\int \log \left(\frac{\rho_n(\theta)}{\pi(\theta)}\right)\rho_n(d\theta)$ becomes,
\begin{align*}
     \mathcal{K}(\rho_n,\pi) & = \int_{-1}^1 \log\left(\frac{2}{\text{Beta}(\alpha_n,\beta_n)}\left(\frac{1+\theta}{2}\right)^{\alpha_n}\left(\frac{1-\theta}{2}\right)^{\beta_n}\right) \frac{2}{\text{Beta}(\alpha_n,\beta_n)}\left(\frac{1+\theta}{2}\right)^{\alpha_n}\left(\frac{1-\theta}{2}\right)^{\beta_n} d\theta.
\end{align*}
Substituting, $y=\frac{1+\theta}{2}$, we get,
\begin{align*}
    \mathcal{K}(\rho_n,\pi) & = \int_{0}^1 \log\left(\frac{2}{\text{Beta}(\alpha_n,\beta_n)}\left(y\right)^{\alpha_n}\left(1-y\right)^{\beta_n}\right) \frac{1}{\text{Beta}(\alpha_n,\beta_n)}\left(y\right)^{\alpha_n}\left(1-y\right)^{\beta_n} dy \\
    & = \int_0^1 \log(2) \frac{1}{\text{Beta}(\alpha_n,\beta_n)}\left(y\right)^{\alpha_n}\left(1-y\right)^{\beta_n} dy \\
    & \qquad +\int_{0}^1 \log\left(\frac{1}{\text{Beta}(\alpha_n,\beta_n)}\left(y\right)^{\alpha_n}\left(1-y\right)^{\beta_n}\right) \frac{1}{\text{Beta}(\alpha_n,\beta_n)}\left(y\right)^{\alpha_n}\left(1-y\right)^{\beta_n}. 
\end{align*}
{The first term integrates upto 2. The second term is the KL-divergence between a Uniform and Beta distribution with parameters $\alpha_n=n\frac{1+\theta_0}{2}$ and $\beta_n=n(1-\frac{1+\theta_0}{2})$ and support $[0,1]$. Following \Cref{prop:kl-prior-unif} it follows that $\mathcal{K}(\rho_n,\pi)$ is upper bounded by,}
\begin{align*}
    \mathcal{K}(\rho_n,\pi) & <2+C_1+\frac{1}{2}\log(n) < C\sqrt{n},
\end{align*}
for some large constant $C$. This completes the proof.
\end{proof}
\subsection{Proof of \Cref{thm:pac-bayes-miss}}
\begin{proof}
As in the proof of \Cref{thm:pac-bayes}, following \cref{eq:proof-step-miss}, we note that,
\begin{align}~\label{eq:proof-step-miss-fin}
    \int \malpharen \Tilde{\pi}_{n,\alpha|X^n}(d\theta) &\leq \frac{\alpha}{1-\alpha}\int \kld  \rho_n(d\theta)+\frac{\alpha}{1-\alpha} \sqrt{ \frac{\text{Var}[\int \loglik \rho_n(d\theta)]}{\eta} }\nonumber\\ & \qquad \qquad +\frac{\mathcal{K}(\rho_n,\pi)-\log(\epsilon)}{1-\alpha}.
\end{align}
Following from \cref{eq:miss-kl} and \cref{eq:miss-var}, we get that,
\begin{align*}
     \int \kld \rho_n(d\theta) & \leq \E[r_n(\theta_0,\theta^*_n)]+n\epsilon_n,
     \intertext{and}
     \int \Var[r_n(\theta,\theta_0)] \rho_n(d\theta) & \leq 2n\epsilon_n +2\Var[r_n(\theta^*_n,\theta_0)].
\end{align*}
Plugging these into \cref{eq:proof-step-miss-fin}, we are done.
\end{proof}
% \section{Notations}
% The following table contains the some of the important notations used.
% %
% \begin{table}[!h]~\label{table:table of notations}
%     \centering
%     \begin{tabular}{|c|c|}
%          $\pi(\theta)$ & Prior \\
%          $\loglik$ & Log-Likelihood ratio\\
%          $\kld$ & Kullback-Leibler divergence between $P^{(n)}_{\theta}$ and $P^{(n)}_{\theta_0}$ \\
%          $\malpharen$ & \alpren divergence between $P^{(n)}_{\theta}$ and $P^{(n)}_{\theta_0}$ \\
%          $\mathcal{M}^{+}(\Theta)$ & Set of all Measurable functions in $\Theta$\\
%          $\pi_{n,\alpha|X^n}$ & The Fractional Posterior\\
%          $\Tilde{\pi}_{n,\alpha|X^n}$ & Projection of $\pi_{n,\alpha|X^n}$ in class $\mathcal{F}$ under KL  metric\\
%          $\mathcal{F}_{id}^{\Phi}$ & Set of all Gaussian Priors
%     \end{tabular}
%     \caption{Table of Notations}
%     \label{tab:my_label}
% \end{table}

\bibliographystyle{plain}
\bibliography{biblio}
\end{document}